\colorlet{darkblue}{blue!90!black}
\colorlet{darkred}{red!90!black}
\newtheorem{theorem}{Theorem}[section]
\newtheorem{lemma}[theorem]{Lemma}
\newtheorem{proposition}[theorem]{Proposition}
\newtheorem{corollary}[theorem]{Corollary}
\theoremstyle{definition}
\newtheorem{assumption}[theorem]{Assumption}
\theoremstyle{remark}
\newtheorem{remark}[theorem]{Remark}
\Crefname{assumption}{Assumption}{Assumptions}
\newcommand{\vn}[1]{{\vert\kern-0.23ex\vert\kern-0.23ex\vert #1 
    \vert\kern-0.23ex\vert\kern-0.23ex\vert}}
\newcommand\Z{\mathbb{Z}}
\newcommand\bone{\mathbf{1}}
\newcommand\cR{\mathcal{R}}
\newcommand\cee{\mathcal{E}}
\newcommand\cA{\mathcal{A}}
\newcommand{\cE}{\mathcal{E}}
\newcommand\cM{\mathcal{M}}
\newcommand\cP{\mathcal{P}}
\newcommand\cF{\mathcal{F}}
\newcommand\cff{\cF}
\newcommand{\D}{\partial}
\newcommand\nell{\,}%K: strange comment!
\def\eps{\varepsilon}
\def\les{\lesssim}
\newcommand{\R}{{\mathbb{R}}}
\newcommand{\Rd}{{\R^d}}
\newcommand{\E}{\mathbf{E}}
\newcommand{\bP}{\mathbf{P}}
\newcommand{\bB}{\mathbb{B}}
\newcommand{\N}{\mathbb{N}}
\def\d{\partial}
\def\({\left(}
\def\){\right)}
\DeclareMathOperator{\tr}{Tr}
\newcommand{\red}[1]{{\color{red}#1}}
\begin{document}
\title{Quantifying a convergence theorem of  Gy\"ongy and Krylov}
\author{Konstantinos Dareiotis\thanks{University of Leeds, \url{k.dareiotis@leeds.ac.uk}}, M\'at\'e Gerencs\'er\thanks{TU Wien, \url{mate.gerencser@tuwien.ac.at}}, and Khoa L\^e\thanks{TU Berlin, \url{le@math.tu-berlin.de}}}
%\institute{\textsterling  and \texteuro}

\maketitle

\begin{abstract}
	We derive sharp strong convergence rates for the Euler--Maruyama scheme approximating multidimensional SDEs with multiplicative noise without imposing \emph{any} regularity condition on the drift coefficient.
	In case the noise is additive, we show that Sobolev regularity can be leveraged to obtain improved rate: drifts with regularity of order $\alpha \in (0,1)$  lead to rate $(1+\alpha)/2$.
\end{abstract}
%\keywords{Euler-Maruyama scheme, Regularisation by noise, Stochastic differential equations, Rate of convergence}
%\MSC{	60H10, 60H50,	65C30}
\tableofcontents

\section{Introduction}\label{sec:intro}
%\subsection{Brief overview}
The present article studies stochastic differential equations (SDEs) of the form
\begin{equ}\label{eq:main}
dX_t=b(X_t)\,dt+\sigma(X_t)\,dB_t,\qquad X_0=x_0,
\end{equ}
and their equidistant Euler--Maruyama approximations
\begin{equ}\label{eq:main-EM}
dX^n_t=b(X^n_{\kappa_n(t)})\,dt+\sigma(X^n_{\kappa_n(t)})\,dB_t,\qquad X^n_0=x_0,
\end{equ}
with the notation $\kappa_n(t)=\lfloor nt\rfloor/n$.
Here the initial condition is $x_0\in\R^d$; the coefficients $b:\R^d\to\R^d$ and $\sigma:\R^d\to\R^{d\times d}$ are measurable functions;  $B$ is a $d$-dimensional standard Brownian motion on a filtered probability space $(\Omega,\cF,(\cF_t)_{t\geq 0},\bP)$; and the dimension $d\in\N$ is arbitrary. 
When the coefficients are Lipschitz continuous, the convergence of $X^n$ to $X$, as well as the rate of convergence, is very well understood.
In this article we are interested in the regime where the drift $b$ is far from Lipschitz, maybe not even continuous. In fact, our first result, \cref{thm:multiplicative}, can be summarised as yielding sharp strong convergence rate without \emph{any} continuity (or even local Sobolev regularity) assumption on $b$.

Let us recall that when $b$ is irregular, the existence and uniqueness of solutions of \eqref{eq:main} relies on the regularising effects of the noise, and therefore some form of nondegeneracy of the diffusion coefficient is necessary.
Under the assumption that $\sigma$ is uniformly elliptic and sufficiently regular, the well-known result of Veretennikov \cite{Veret80} states that \eqref{eq:main} is well-posed even with merely bounded and measurable drift coefficient $b$.
However, Veretennikov's proof, using the Yamada--Watanabe principle, was not constructive and did not have any implications on the stability of the solution with respect to the Euler--Maruyama approximations $X^n$. 
It took $16$ years until the seminal work of Gy\"ongy and Krylov \cite{GyK} for the first proof that, under the same weak assumptions, $X^n$ converges in probability to $X$.
This is in stark contrast with the case of regular coefficients, where the  well-posedness and the convergence of the Euler--Maruyama approximations follow from essentially the same arguments (which are straightforward applications of the Burkholder--Davis--Gundy and Gronwall's inequalities).
The result of Gy\"ongy and Krylov was qualitative, that is, it provided no rates of convergence.
Despite significant interest (see Section \ref{sec:literature} below), there has not been \emph{any} known upper bound for the error $|X_n-X|$ in the case of bounded measurable $b$.
Our first result, \cref{thm:multiplicative} not only does provide an upper bound but also it   actually shows that this bound is of order   $n^{-1/2+\eps}$ with arbitrary $\eps>0$, which is known to be sharp even in the case of smooth coefficients (see \cite{MR1617049,MR1119837}). 

The optimality of the rate $1/2$ no longer holds when the noise is additive. If $\sigma$ is simply the identity matrix, then for smooth $b$ the rate of convergence is known to be $1$.
We are therefore interested in how much, and what kind of, regularity assumption is needed on $b$ to improve the rate of convergence.
Our second result, \cref{thm:additive} establishes rate $(1+\alpha)/2$, provided that the drift possesses Sobolev regularity of order $\alpha \in (0,1)$ with integrability exponent $p \geq \max\{2, d\}$.

The rest of the article is structured as follows. In the remainder of the introduction we briefly overview the relevant literature (Section \ref{sec:literature}), highlight the main aspects of the proof (Section \ref{sec:outline}) and state the main results of the paper (Section \ref{sec:formulation}).
In Section \ref{sec:prelim} we introduce the notation and collect/prove a number of auxiliary statements. Section \ref{sec:quad} is concerned with some quadrature estimates, which essentially provide the rate of convergence. In Section \ref{sec:pde} a general stability estimate is given for approximate solutions of \eqref{eq:main}, which is to be applied with $X$ and $X^n$.
Section \ref{sec:proof} combines all the previous ingredients to provide the proofs of the main results.

\subsection{Literature}\label{sec:literature}

The strong error analysis of the Euler--Maruyama scheme for SDEs with irregular coefficients has attracted significant attention in recent years. 
In addition to being a developing branch of the field of `regularisation by noise', it has the practical relevance that
SDEs with discontinuous drifts  have recently been utilised in  applied sciences. 
As a few examples,  they are used  in finance  for modelling equity markets (see, \cite{finance}),  in neuroscience for modelling interacting neurons that follow integrate-and-fire type dynamics (see, \cite{neurons}), and also  in the modelling of energy storage problems  (see, \cite{control}).

In one direction, see among others \cite{NT_MathComp, PT, Bao2, MX, Bao2020, Issoglo},
low regularity of order $\alpha \in (0,1)$ of the drift on the H\"older scale is assumed.
In these works the regularising properties of the noise at the level of the dual parabolic partial differential equation (PDE) are used in order to close the estimates. This is a technique which originates in the work of Zvonkin and Veretennikov \cite{Zvonya, Veret80}. 
The version that is most suitable in the setting of numerics is due to Flandoli, Gubinelli and Priola \cite{FGP}, 
and is known as the \emph{It\^o--Tanaka trick}.
This method led to various results on the rate of convergence in the case of irregular drift.
Concerning \eqref{eq:main},  in \cite{PT} the authors derived a rate  proportional to the regularity of the drift, namely, rate $\alpha/2$ for $\alpha$-H\"older continuous drift $b$,  $\alpha \in (0,1)$. Notice that this seemed to be consistent with the regular case (see, e.g., \cite[Theorem~7.5]{Pages}, where for additive noise the rate $\alpha/2$ is derived for $\alpha \in [1,2]$).
However, it turned out that this rate is suboptimal: in  \cite{DG} it is shown that in the additive noise case the rate $1/2$ is achieved even for Dini-continuous $b$, and in dimension $d=1$, for merely bounded $b$.
The main reason behind this improvement is that one can also leverage the regularising effects of the noise at a purely numerical analytic level: namely, it leads to sharp quadrature-type estimates.
For earlier works on such estimates, see \cite{Altmeyer,KHiga_quad,MR2842909}.
These bounds are closely related to the error analysis of the Euler--Maruyama scheme.

Another direction is explored in, among others, \cite{Szo1, Szo3, Y2, MY-lowerbound}, where the irregularity on $b$ is assumed to take the form of discontinuities at finitely many points (or, in higher dimensions, hypersurfaces), outside of which the usual regularity assumptions are imposed.
For a detailed account on the development here, including other approximation schemes, we refer to the introduction of
Neuenkirch and Sz\"olgyenyi \cite{NeuSz}.
Another relevant feature of \cite{NeuSz} is that, like in \cite{DG}, the regularising properties of the noise are exploited on two (PDE and quadrature) levels. With this method, in the case of dimension $d=1$ and additive noise, the authors show that for $b \in W^\alpha_2\cap L_\infty\cap L_1$, for $\alpha \in (0,1)$, the rate $\min\{ (1+\alpha)/2, 3/4\}$ can be achieved by the Euler--Maruyama scheme. Since piecewise Lipschitz functions belong (at least locally) to $W^{1/2-\eps}_2$ for any $\eps>0$, this result generalises several previous ones.
In the scalar piecewise Lipschitz case lower bounds have also been obtained by M\"uller-Gronbach and Yaroslavtseva \cite{MY-lowerbound}, showing that the rate $3/4$ is sharp.

Finally, we mention the recent work \cite{ButDarGEr}, which also works on the H\"older scale, but with quite different methods from the above. Instead of relying on PDE theory, it introduces an approach based on stochastic sewing \cite{Khoa}. This approach not only leads to improved rates (in case $b\in C^\alpha$ with additive noise, one gets rate $(1+\alpha)/2$ in \cite{ButDarGEr} vs. $1/2$ in \cite{DG} vs. $\alpha/2$ in \cite{PT}), but also widely extends the scope of driving noise, covering non-Markovian examples like fractional Brownian motions.

The contributions of the present article in relation to the existing results are as follows.
\begin{itemize}
\item In the multiplicative noise case \cref{thm:multiplicative} provides the first, and at the same time, sharp, quantification of the qualitative theorem of Gy\"ongy and Krylov \cite{GyK}. Previous results imposed, in one way or another, positive regularity (e.g., H\"older \cite{ButDarGEr} or -- implicitly -- Sobolev \cite{Szo3}).
The case of $0$ regularity is critical from an analytic point of view, as demonstrated by the fact that up to now not even suboptimal rates were available in this borderline case. 
\item In the additive noise case \cref{thm:additive} shows that if the drift does have additional regularity on the Sobolev scale, then the rate of convergence is also improved.
In comparison to \cite{ButDarGEr}, regularity is assumed on the Sobolev, rather than the H\"older scale, allowing discontinuous coefficients. 
Even when $b\in W^{\alpha}_p$ with $p>d/\alpha$ and therefore one has by Sobolev embedding $b\in C^{\alpha-d/p}$, the present results can not be recovered from \cite{ButDarGEr}. Indeed, \cite{ButDarGEr} would imply rate $(1+\alpha-d/p)/2$ in this case, while \cref{thm:additive} shows rate $(1+\alpha)/2$.
In comparison to \cite{NeuSz} which considers drifts from Sobolev spaces, our methods give stronger results in two main directions. First, for drifts with Sobolev regularity $\alpha$ the Euler--Maruyama scheme is shown to have rate $(1+\alpha)/2$, which removes the $3/4$ threshold from the result of \cite{NeuSz}.
Second, \cref{thm:additive} is valid in all dimensions, in contrast to the restriction $d=1$ from \cite{NeuSz}. Other improvements include higher moment estimates (provided $b$ has Sobolev regularity with higher integrability exponent) and uniform in time error estimates.
\end{itemize}

\subsection{On the proof}\label{sec:outline}
We now briefly outline the strategy of the proof, also highlighting the differences/similarities to previous works. 
For simplicity let us consider the additive case, with constant identity diffusion matrix. One has the standard error decomposition
\begin{equ}\label{eq:decomposition}
X_t-X^n_t=\int_0^t\big(b(X_s)-b(X^n_s)\big)\,ds+\int_0^t \big(b(X^n_s)-b(X^n_{\kappa_n(s)})\big)\,ds.
\end{equ}
To have any chance of obtaining a Gronwall-type estimate, one would like to bound the first integral by $\|X-X^n\|$, in some norm $\|\cdot\|$. In all but one of the above mentioned works this is achieved by means of PDE techniques, using what is known as the It\^o--Tanaka trick from \cite{FGP}, which in turn is a variation of the methods of \cite{Zvonya, Veret80}.
Recently, in \cite{ButDarGEr} a new approach was introduced to obtain such bounds, based on the sewing methods inspired by \cite{Cat-Gub, Khoa}.
Coincidentally, the sewing method also turns out to be efficient in handling the second integral in \eqref{eq:decomposition}.
For example, if one takes the coefficient $b$ from the H\"older space $C^{\alpha}$, $\alpha\in(0,1)$, then instead of the naive bound of order $\|b\|_{C^\alpha}n^{-\alpha/2}$
one can obtain a bound of order $\|b\|_{C^\alpha}n^{-(1+\alpha)/2}$, see \cite[Lemma~4.2]{ButDarGEr}.

The lack of regularity of $b$ poses difficulties in both steps.
Therefore, in the present article we follow a hybrid path: the first term in \eqref{eq:decomposition} is treated by the PDE approach and the second one is estimated by stochastic sewing.
Concerning the PDE step, in the Zvonkin/It\^o--Tanaka transformation
the solution of the corresponding PDE  (see \eqref{eq:PDE}) does not have bounded second derivatives, which makes closing the estimates with Gronwall-type argument problematic.
This is resolved by relying instead on local $L_p$-bounds on the second derivatives, replacing Lipschitz bounds with the Hardy--Littlewood maximal inequality, and finally controlling the localisation error.
While localisation steps are quite common in the literature when it comes to uniqueness of SDEs (usually appearing as a simple  stopping times argument), they have to be made quantitative for the error analysis.
Let us also mention that the use of Hardy--Littlewood maximal inequality in a similar context (albeit without localisation) also appeared in the recent work \cite{Bao2020}.
Concerning the stochastic sewing step, the main novelty compared to \cite{ButDarGEr} is to exploit Sobolev regularity in estimating approximation errors for `occupation time functionals', that is, bounding quantities of the form
\begin{equation*}
\int_0^t \big(f(B_s)-f(B_{\kappa_n(s)})\big)\,ds,
\end{equation*}
with $f$ belonging to certain Sobolev space, see Lemma \ref{lem:(ii)}. This requires a version of the stochastic sewing lemma of \cite{Khoa} allowing singularities, by introducing temporal weights, see Lemma \ref{lem:wSSL}.

The hybrid approach described above seems useful and efficient. In particular, it has been applied and extended in \cite{le2021taming} to obtain sharp rates of convergence of tamed Euler schemes for SDEs with integrable drifts, as considered in Krylov--R\"ockner \cite{MR2117951}.
\subsection{Main results}\label{sec:formulation}
%\mateText{I was thinking it might be better to state the results before introducing all the notations. But, then we have to be careful to not use any nonstandard notations (like $\bB$) at this point}
\begin{assumption}\label{asn:multiplicative-basic}
Assume that $b$ is bounded, $\sigma$ is twice differentiable, its derivatives of order $0$, $1$, and $2$ are bounded functions, and that for some $\lambda>0$, one has $|y^* (\sigma \sigma^*)(x) y|\geq \lambda^2|y|^2$ for all $x,y\in\R^d$.
\end{assumption}
Recall that $X$ and $X^n$ are defined as the solutions of \eqref{eq:main} and \eqref{eq:main-EM}, respectively.
The ellipticity of $\sigma$ is assumed in order to ensure solvability when $b$ is irregular.
The fact that under \cref{asn:multiplicative-basic} (and therefore also under the stronger \cref{asn:additive-basic} below) these solutions exist and are unique, follows immediately from \cite{Veret80}.
The nondegeneracy of the noise can also be highly relevant in the context of approximations: as shown \cite{Hairer2015}, even if all coefficients are globally bounded with locally bounded derivatives of any order, with degenerate diffusion coefficient the strong rate of convergence may be worse than any positive power.
Our results also imply that such behaviour can be excluded for elliptic diffusion, and see the discussion in Section \ref{sec:local} below how the results can be extended to a certain class of degeneracies.
The first main result of the article is the following.
\begin{theorem}\label{thm:multiplicative}
Given  \cref{asn:multiplicative-basic}, let $\eps\in(0,1)$, $p\in(0,\infty)$.
Then for all $n\in \N$ the following bound holds
\begin{equ}\label{eq:main-bound-multiplicative}
\big(\E\sup_{t\in[0,1]}|X_t-X^n_t|^p\big)^{1/p}\leq N n^{-1/2+\eps},
\end{equ}
with some constant $N$ depending only on $d,p,\eps,\lambda$ and $\sup\big(|b|+|\sigma|+|\nabla\sigma|+|\nabla^2\sigma|\big)$.
\end{theorem}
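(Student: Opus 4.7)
The plan is to apply the general stability result from Section~\ref{sec:pde} with $Y = X^n$, so that \eqref{eq:main-bound-multiplicative} reduces to quantifying how well the Euler scheme fails to satisfy \eqref{eq:main}. Writing
\begin{equ}
dX^n_t = b(X^n_t)\,dt + \sigma(X^n_t)\,dB_t + R^b_t\,dt + R^\sigma_t\,dB_t,
\end{equ}
with residuals $R^b_t = b(X^n_{\kappa_n(t)}) - b(X^n_t)$ and $R^\sigma_t = \sigma(X^n_{\kappa_n(t)}) - \sigma(X^n_t)$, the task becomes to bound these residuals in an appropriate norm at rate $n^{-1/2+\eps}$. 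Since $\sigma$ has bounded first derivative, $|R^\sigma_t|$ is controlled pointwise by the Euler one-step increment $|X^n_t - X^n_{\kappa_n(t)}|$, of order $n^{-1/2}$. The drift residual $R^b$ is merely bounded, and the integral $\int_0^t R^b_s\,ds$ must therefore be estimated globally via the quadrature machinery of Section~\ref{sec:quad}: heuristically, since $X^n_t - X^n_{\kappa_n(t)}$ is essentially Gaussian with variance of order $1/n$, heat-kernel smoothing absorbs the irregularity of $b$ and returns a bound of order $n^{-1/2+\eps}$. This step is ultimately an application of the weighted stochastic sewing lemma (\cref{lem:wSSL}).

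The heart of the argument is the stability estimate of Section~\ref{sec:pde} itself, which must operate with irregular drift. I would implement a Zvonkin transform $\phi(t,x) = x + u(t,x)$, where, for a sufficiently large $\lambda$, the vector-valued $u$ solves the backward parabolic system
\begin{equ}
\partial_t u + \tfrac12 \tr\bigl(\sigma\sigma^*\nabla^2 u\bigr) + b\cdot\nabla u - \lambda u = -b,\qquad u(1,\cdot) = 0,
\end{equ}
on $[0,1]\times\R^d$. Standard parabolic $L_p$-theory gives $\|u\|_\infty + \|\nabla u\|_\infty\le 1/2$ (for $\lambda$ large), so $\phi$ is a bi-Lipschitz diffeomorphism and $|\phi(t,X_t)-\phi(t,X^n_t)|$ is equivalent to $|X_t-X^n_t|$. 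Applying It\^o's formula to $\phi(\cdot,X)$ cancels the drift $b$ outright (replacing it by the bounded $\lambda u$), whereas applying It\^o's formula to $\phi(\cdot,X^n)$ leaves an additional error coming from freezing $b$ and $\sigma\sigma^*\nabla^2 u$ at $\kappa_n(t)$ rather than $t$. After subtraction, the transformed error $\phi(\cdot,X)-\phi(\cdot,X^n)$ satisfies an equation whose drift and diffusion are Lipschitz, modulo the residuals $R^b, R^\sigma$ and this discretisation remainder.

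The main obstacle is precisely this last family of terms, because $\nabla^2 u$ is \emph{not} bounded: parabolic theory only provides local $L_p$ control. To estimate integrals like $\int_0^t |\nabla^2 u(s,X^n_s)|\, |X^n_s-X^n_{\kappa_n(s)}|\,ds$, I would use the pointwise Hardy--Littlewood inequality
\begin{equ}
|f(x)-f(y)|\le C|x-y|\bigl(\cM|\nabla f|(x)+\cM|\nabla f|(y)\bigr)
\end{equ}
together with Krylov-type bounds of the form $\E\int_0^1 g(X_s)\,ds \lesssim \|g\|_{L_p}$ and its analogue for $X^n$, applied to $g = \cM|\nabla^2 u|$. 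These Krylov bounds require compactly supported $L_p$ data, so I would truncate $b$ outside a large ball of radius $R$, control the exit probability of $X$ and $X^n$ from this ball using the boundedness of the coefficients, and finally optimise $R$ as a polynomial function of $n$. This quantitative localisation, together with the small-$\eps$ losses inherent in the Sobolev and sewing inequalities, is what produces the $n^\eps$ factor in \eqref{eq:main-bound-multiplicative}. Feeding these bounds into a Gronwall-type iteration applied to $\E\sup_{s\le t}|\phi(s,X_s)-\phi(s,X^n_s)|^p$ then closes the proof.
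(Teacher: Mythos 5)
Your proposal follows the paper's own route for Theorem~\ref{thm:multiplicative}: apply the stability estimate of Lemma~\ref{lem:kindofregularisation} with $Y = X^n$, i.e.\ a Zvonkin/It\^o--Tanaka transformation via the truncated PDE~\eqref{eq:PDE-trun-new}, with the unbounded $\nabla^2 u$ handled through Hardy--Littlewood maximal functions~\eqref{eq:maximal-new} and Krylov-type density bounds (Lemma~\ref{lem:Krylov-Gyongy-estimate}), the drift-freezing error controlled by the sewing-based quadrature estimate of Section~\ref{sec:quad} (Corollary~\ref{cor:Girsanov-multiplicative}), and the iteration closed by the nonstandard Gronwall lemma~\ref{lem:Gronwall-type}. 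Two details worth flagging: the paper takes the truncation radius $K$ \emph{logarithmic} in $n$, namely $K = p\ln n$, which suffices because the boundedness of $b,\sigma$ gives exponential moment bounds~\eqref{eq:exponential-bounds-new} and hence $\bP(\sup_t|X_t|\ge K)\lesssim e^{-K}$; and Corollary~\ref{cor:Girsanov-multiplicative} carries no weight function $g$, so the It\^o remainder $\int_0^t (b(X^n_s)-b(X^n_{\kappa_n(s)}))\nabla u(s,X^n_s)\,ds$ cannot be fed into the quadrature estimate directly --- it must first be rewritten by inserting and subtracting $\nabla u(s,X^n_{\kappa_n(s)})$, applying the quadrature estimate to $f = b\nabla u$, and bounding the remaining increment of $\nabla u$ through a further maximal-function and Krylov-bound argument (the paper's splitting $\cR_2\le N(\cR_3+\cR_4+\cR_5)$).
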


In the additive case, an even higher rate can be shown given some Sobolev regularity of the drift $b$.  The homogeneous Sobolev spaces $\dot{W}^\alpha_p(\R^d)$,  for $\alpha\in(0,1)$ and $p\in[1,\infty)$, are defined as usual: they contain all measurable functions $f : \R^d \to \R$ such that 
\begin{equ}\label{eq:Sobolev}
\,[f]_{\dot{W}^{\alpha}_{p}(\R^d)}:=\bigg(\int_{\R^d}\int_{\R^d}\frac{|f(x)-f(y)|^p}{|x-y|^{d+\alpha p}}\,dx\,dy\bigg)^{1/p}< \infty. 
\end{equ}
This definition obviously extends to finite dimensional vector-valued functions.

\begin{remark}   \label{rem:choice-of-rep}
If $f \in \dot{W}^{\alpha}_{m}(\R^d)$ and $\alpha>d/m$ then $f$ has a version that is continuous and $[ f]_{C^{\alpha-d/m}(\R^d)} \le N(d, \alpha, m) [f]_{\dot{W}^\alpha_m(\R^d)}$ (see, e.g., \cite[Lemma 2 and Remark 3, p. 203-206]{Krylov_PDE}).  If $\alpha>d/m$,   then  the elements of  $\dot{W}^{\alpha}_{m}(\R^d)$  will be  treated as continuous functions rather than equivalence classes.  In particular, if we write that  $b \in \dot W^\alpha_m(\R^d)$ (as in Assumption \eqref{asn:additive-basic}) and $\alpha >d/m$,  then we automatically mean that, in  addition,  $b \in C^{\alpha-d/m}$. 
\end{remark}

\begin{assumption}\label{asn:additive-basic}
Assume that $\sigma=I$ and $b$ is a bounded measurable function which belongs to $\dot{W}^{\alpha}_{m}(\R^d)$ for some $\alpha\in (0,1)$ and $m\geq\max(d,2)$.
\end{assumption}

\begin{theorem}\label{thm:additive}
Let \cref{asn:additive-basic} hold, let $\eps\in(0,1)$, $p\in(0,m]$.
Then for all $n\in \N$ the following bound holds
\begin{equ}\label{eq:main-bound-additive}
\big(\E\sup_{t\in[0,1]}|X_t-X^n_t|^p\big)^{1/p}\leq N n^{-(1+\alpha)/2+\eps},
\end{equ}
with some constant $N$ depending only on $d,p,\eps,\alpha,m,\sup|b|$ and $[b]_{\dot{W}^\alpha_m(\R^d)}$.
\end{theorem}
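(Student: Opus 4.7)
The approach is the hybrid one sketched in Section~\ref{sec:outline}: start from the decomposition \eqref{eq:decomposition},
\begin{equation*}
X_t-X^n_t=\int_0^t\bigl(b(X_s)-b(X^n_s)\bigr)\,ds+Q^n_t,\qquad Q^n_t:=\int_0^t\bigl(b(X^n_s)-b(X^n_{\kappa_n(s)})\bigr)\,ds,
\end{equation*}
control the regularity-loss term (the first integral) via the PDE-based stability estimate of Section~\ref{sec:pde}, and control the quadrature term $Q^n$ via the Sobolev stochastic sewing estimate of Section~\ref{sec:quad}.

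Step by step, I would: (i) verify that the pair $(X,X^n)$ fits the hypotheses of the stability estimate of Section~\ref{sec:pde}; (ii) apply that stability estimate to get a bound of the form
\begin{equation*}
\bigl(\E\sup_{t\le 1}|X_t-X^n_t|^p\bigr)^{1/p}\le N\,\bigl(\E\sup_{t\le 1}|Q^n_t|^q\bigr)^{\theta}
\end{equation*}
for some $q\le m$ and $\theta\in(0,1]$ close enough to $1$ that a factor $n^{-\epsilon/2}$ can be absorbed on the right; (iii) apply Lemma~\ref{lem:(ii)} to bound $(\E\sup_{t\le 1}|Q^n_t|^q)^{1/q}\le N n^{-(1+\alpha)/2+\epsilon/2}$. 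The input to step (iii) is that $b\in\dot W^\alpha_m$, $m\ge\max(d,2)$, $p\le m$, and that $X^n$ is a Brownian motion plus a bounded piecewise-constant drift, so that its transition density over each grid interval enjoys Gaussian smoothing as needed to power the weighted stochastic sewing lemma \ref{lem:wSSL}. Combining these yields the claim with $\epsilon$ in place of $\epsilon/2$, and the passage from fixed $t$ to $\sup_t$ is a standard Doob/BDG argument, permissible because $p\le m$.

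The main obstacle is upstream, in the stability estimate of Section~\ref{sec:pde} rather than in the final assembly. When $b$ has only Sobolev regularity of order $\alpha<1$, the associated Zvonkin-type PDE $\tfrac{1}{2}\Delta u-\lambda u=-b$ yields a solution $u$ whose Hessian is only in $L_m$, not $L_\infty$, so the usual Gronwall iteration based on a Lipschitz bound for the drift of the transformed SDE breaks down. The remedy, as indicated in Section~\ref{sec:outline}, is to bound Hessian increments by a Hardy--Littlewood maximal function, $|\nabla^2 u(x)-\nabla^2 u(y)|\lesssim|x-y|\bigl(M|\nabla^3 u|(x)+M|\nabla^3 u|(y)\bigr)$, and then to absorb $M|\nabla^3 u|$ along the trajectories of $X$ and $X^n$ by occupation-time estimates of the same flavour as those used to control $Q^n$, following a quantitative localization that controls spatial decay. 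The assumption $m\ge\max(d,2)$ is used in two ways: to secure the Sobolev embedding $\dot W^\alpha_m\hookrightarrow C^{\alpha-d/m}$ so that $b$ can be evaluated pointwise along trajectories (cf.\ Remark~\ref{rem:choice-of-rep}), and to give the integrability of $\nabla^2 u$ needed for the maximal-function argument. Once the stability estimate is available, the proof of \cref{thm:additive} in Section~\ref{sec:proof} is essentially a one-line combination of (ii) and (iii) above.
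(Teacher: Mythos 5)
Your high-level strategy matches the paper's -- decompose as in \eqref{eq:decomposition}, treat the first integral via the PDE stability estimate of Section~\ref{sec:pde}, and treat the quadrature term via the weighted stochastic sewing of Section~\ref{sec:quad}. But there are three genuine gaps in how you propose to carry it out.

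First, the derivative order in your Hardy--Littlewood step is off by one, and this matters. You propose to bound Hessian increments, $|\nabla^2 u(x)-\nabla^2 u(y)|\lesssim|x-y|\bigl(\cM|\nabla^3 u|(x)+\cM|\nabla^3 u|(y)\bigr)$, but a third derivative of $u$ is not under control (and is not even locally $L_1$) when the right-hand side of the PDE is merely bounded with fractional Sobolev regularity. In Lemma~\ref{lem:kindofregularisation} the Lusin--Lipschitz inequality \eqref{eq:maximal-new} is applied with $f=\nabla u^\ell\sigma$, a \emph{first}-order object: after the It\^o formula step, the critical stochastic integral has integrand $\nabla u^\ell(X_s)\sigma(X_s)-\nabla u^\ell(Y_s)\sigma(Y_s)$, and the maximal function that appears in \eqref{eq:def-A} is $\cM|\nabla(\nabla u^\ell\sigma)|$, which involves only $\nabla^2 u$. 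That $\nabla^2 u\in L_q([0,1]\times\R^d)$ for all finite $q$ is exactly what the parabolic $L_q$-theory (Lemma~\ref{lem:PDE-estimates}, applied to the truncated right-hand side $b_K$) delivers, and \eqref{eq:H-L-new} transfers this to the maximal function.

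Second, your step (ii) presents the stability estimate as $\|X-X^n\|\lesssim\|Q^n\|^\theta$. The actual estimate \eqref{eq:reg-new} is an additive bound, $\E\sup_t|X_t-X_t^n|^p\le N\bigl(\bP(A_1\ge m/(2N))\bigr)^{1/2}+N^m\cR$, and the remainder $\cR$ contains \emph{two} quadrature terms: besides the unweighted one you call $Q^n$, there is the $\nabla u$-weighted term $\int_0^t\bigl(b(X^n_s)-b(X^n_{\kappa_n(s)})\bigr)\nabla u(s,X^n_s)\,ds$. This is precisely why Lemma~\ref{lem:(ii)} and Corollary~\ref{cor:Girsanov-additive} carry the weight function $g$: in the proof of Theorem~\ref{thm:additive} they are applied with $g=1$ \emph{and} with $g=\nabla u$, the latter requiring the bound $\|\nabla u\|_{\bB([0,1],C^{1-\eps})}\le N(q)K^{d/q}$ from \eqref{eq:PDE-K dependence}. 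Your sketch misses the weighted term entirely, and it is not subsumed by the unweighted one.

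Third, the claim that the maximal-function contributions are absorbed ``by occupation-time estimates of the same flavour as those used to control $Q^n$'' mischaracterises the closing argument. The increasing process $A_t$ has unbounded density, so a standard Gronwall iteration does not apply; the paper uses the stopping-time Gronwall Lemma~\ref{lem:Gronwall-type}, which produces the $N^m\cR+N(\bP(A_1\ge m/(2N)))^{1/2}$ structure, and the tail probability is then controlled by Markov's inequality together with Khas'minskii's exponential-moment estimate (Lemma~\ref{lem:kasminskii}) -- a mechanism unrelated to the sewing estimates used for $Q^n$. The quantitative balancing $K=p\ln n$, $q$ large, $m\asymp\ln n$, $\mu$ large at the end of Section~\ref{sec:proof} is what makes the cutoff error $e^{-K}$, the $N^m$ blow-up, the $K^{d/q}$ factors, and the tail probability all compatible with the target rate $n^{-p((1+\alpha)/2-\eps)}$. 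Without spelling this out, the ``quantitative localization'' you mention has no content and the proof does not close.
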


\begin{remark}
In the rest of the paper we will actually assume strict inequality in the conditions  $m\geq\max(d,2)$, and $m\geq p$. To see that this is not a restriction,   by Sobolev embeddings
we can always slightly increase $m$ at the price of slightly decreasing $\alpha$.
In the situation of \cref{asn:additive-basic} even a better embedding is available, namely that $b$ belongs to $\dot{W}^{\theta\alpha}_{m/\theta}$ for any $\theta\in(0,1)$, see \cref{lem.triv.em}.
This means a slight loss in the rate, which can just be absorbed in the $\eps$.
\end{remark}

\begin{remark}
Note that there is no moment restriction in \cref{thm:multiplicative} but there is the restriction $p\leq m$ in \cref{thm:additive}. One can increase the moments by sacrificing from the rates, using again the embedding from Lemma \ref{lem.triv.em}.
\end{remark}

As mentioned before, one interesting class of coefficients with Sobolev regularity is that of indicator functions of regular domains. Indeed, it is well-known (see, e.g., \cite[Section~3.2]{Indicators}) that if $D\subset\R^d$ is a bounded Lipschitz domain, then $\bone_D\in \dot{W}^{1/p-\eps}_p$ for every $p\in[1,\infty)$, $\eps>0$. Since multiplying with a bounded Lipschitz function leaves $\dot{W}^{1/p-\eps}_p$ invariant, one gets the following corollary.

\begin{corollary}\label{cor:indicators}
Let $\sigma=I$, $\eps\in(0,1)$, and $p\in(0,\infty)$.
Assume that with a finite set of bounded Lipschitz domains $D_1,\ldots, D_k$ and bounded Lipschitz continuous functions $f_1,\ldots,f_k$, $b$ is of the form
\begin{equ}
b(x)=\sum_{i=1}^kf_i(x)\bone_{D_i}(x).
\end{equ}
Then for all $n\in\N$ the following bound holds
\begin{equ}
\big(\E\sup_{t\in[0,1]}|X_t-X^n_t|^p\big)^{1/p}\leq N n^{-\frac{1}{2}\big(1+\frac{1}{\max(2,d,p)}\big)+\eps},
\end{equ}
with some constant $N$.
\end{corollary}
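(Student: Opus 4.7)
The plan is to derive the corollary as a direct application of \cref{thm:additive}, the only real work being to identify the appropriate Sobolev parameters $(\alpha,m)$ for a function of the form $b=\sum_i f_i\bone_{D_i}$ and then to optimize these parameters in terms of the moment exponent $p$.

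First I would establish the required Sobolev regularity of $b$. By the fact recalled just before the corollary (following \cite{Indicators}), for each bounded Lipschitz domain $D_i$ and every $q\in[1,\infty)$, $\eta>0$ one has $\bone_{D_i}\in \dot W^{1/q-\eta}_q(\R^d)$. Since each $f_i$ is bounded and globally Lipschitz, and such functions act as pointwise multipliers on $\dot W^{\beta}_q(\R^d)$ for $\beta\in(0,1)$ (this is a routine consequence of the Slobodeckij seminorm definition \eqref{eq:Sobolev}, by splitting $f_i(x)\bone_{D_i}(x)-f_i(y)\bone_{D_i}(y)=(f_i(x)-f_i(y))\bone_{D_i}(x)+f_i(y)(\bone_{D_i}(x)-\bone_{D_i}(y))$ and using boundedness of $D_i$ together with $\beta<1$), we obtain $f_i\bone_{D_i}\in\dot W^{1/q-\eta}_q(\R^d)$ for every $q\in[1,\infty)$ and $\eta>0$. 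Summing over $i$, $b\in\dot W^{1/q-\eta}_q(\R^d)$ for every such $q,\eta$, and clearly $b$ is bounded.

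Next I would choose parameters. Given $\eps>0$ and $p\in(0,\infty)$, set $m:=\max(2,d,p)+\delta$ with $\delta>0$ small enough to be tuned, and $\alpha:=1/m-\delta$. Then \cref{asn:additive-basic} is satisfied: $m>\max(d,2)$, $m\geq p$, $\alpha\in(0,1)$, and by the previous paragraph $b\in\dot W^\alpha_m(\R^d)$. Applying \cref{thm:additive} with these parameters and moment exponent $p\leq m$ yields
\begin{equ}
\bigl(\E\sup_{t\in[0,1]}|X_t-X^n_t|^p\bigr)^{1/p}\leq N\, n^{-(1+\alpha)/2+\eps/2}=N\, n^{-\tfrac12(1+1/m-\delta)+\eps/2}.
\end{equ}
Since $1/m\to 1/\max(2,d,p)$ as $\delta\to 0$, choosing $\delta$ small enough so that $\tfrac12(1/\max(2,d,p)-1/m+\delta)<\eps/2$ gives the claimed bound.

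Since the argument is essentially a parameter choice followed by invocation of \cref{thm:additive}, there is no real obstacle; the one point to verify carefully is the multiplier claim for bounded Lipschitz functions on $\dot W^{\beta}_q$, but this is immediate from the Gagliardo seminorm \eqref{eq:Sobolev} and the assumed boundedness of the domains $D_i$, which ensures the cross term $f_i(y)(\bone_{D_i}(x)-\bone_{D_i}(y))$ contributes a finite Slobodeckij integral.
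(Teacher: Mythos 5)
Your proposal is correct and follows essentially the same route as the paper's proof: establish that $b\in\dot W^{1/m-\eta}_m$ for the relevant $m$ (using the known Sobolev regularity of indicators of bounded Lipschitz domains and the multiplier property of bounded Lipschitz functions, which the paper states without proof), then apply \cref{thm:additive} and absorb the small losses into $\eps$. The only cosmetic difference is that the paper takes $m=\max(2,d,p)$ outright (leaving the strict-inequality adjustment to the remark following \cref{thm:additive}), whereas you build the slack $\delta$ into $m$ explicitly; both are fine.
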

\begin{proof}
By the preceding remarks, $b$ belongs to $\dot{W}^{1/m- \varepsilon'}_m$ for $m=\max(2,d,p)$ and for every $\varepsilon'>0$, hence satisfying  \cref{asn:additive-basic}.
The result follows immediately from \cref{thm:additive}.
\end{proof}

\begin{remark}\label{rem:indicators}
In the special case of $d=1$, Corollary \ref{cor:indicators} yields the $L_2$-rate $3/4-\eps$. As mentioned above, this is known to be sharp, see  \cite{MY-lowerbound}.
We do not know whether in general dimensions $d>1$ the $L_2$-rate $(d+1)/(2d)$ is sharp, but it is certainly the best known bound at the moment.
\end{remark}

\subsection{Localising irregularities}\label{sec:local}

Let us now briefly outline how to some extent the results can be extended to coefficients with degeneracy and/or growth, provided these properties do not ``interfere'' with the irregularities.

Let $A^1,A^2$ be two Lipschitz domains such that $\bar A^1\cup \bar A^2=\R^d$.
Take some $\delta>0$ and denote the $\delta$-fattening of $A^i$ by $A^i_\delta$ for $i=1,2$.
We assume that there exist (globally defined) coefficients $b^i,\sigma^i$ such that $b=b^i$ and $\sigma=\sigma^i$ on $A^i_{3\delta}$, and that furthermore the corresponding SDEs and their Euler approximations satisfy the strong Markov property and for all $n\in\N$ the bound
\begin{equ}
\E\sup_{t\in[0,1]}| X_t^i- X^{i,n}_t|^p\leq K n^{-\alpha}
\end{equ}
holds for some $K<\infty$, $p\in[1,\infty)$, and $\alpha>0$ uniformly over initial conditions.
Furthermore, assume the a priori estimates
\begin{equ}
\E\sup_{t\in[0,1]}|X_t|^q+\sup_{n\in\N}\E\|X^n\|_{C^\kappa[0,1]}^q\leq K(q)
\end{equ}
for some $\kappa\in(0,1]$ and for all $q\in[1,\infty)$.
One may think of an example where $b$ is bounded and $\sigma$ is elliptic on a ball and both coefficients are Lipschitz but possibly degenerate and have some (linear) growth on the complement of a smaller ball (and therefore ``best of both worlds'' on the intersection).
We then claim that
\begin{equ}
\E\sup_{t\in[0,1]}| X_t- X^{n}_t|^p\leq Nn^{-\alpha+\eps}
\end{equ}
for any $\eps>0$, where $N=N(K, (K(q))_{q\in[1,\infty)},p,\eps,\delta)$.
Without loss of generality we assume $x_0\in \bar A^1$, and we define the stopping times $\tau_0=0$, and for $k=0,1,\ldots$,
\begin{equ}
\tau_{2k+1}=\inf\{t\geq \tau_{2k}: tn\in\N, X^n_t\notin A^1_\delta\},\quad
\tau_{2k+2}=\inf\{t\geq \tau_{2k+1}: tn\in\N, X^n_t\notin A^2_\delta\}.
\end{equ}
For any $q\in(1,\infty)$, with dual exponent $q'=q/(q-1)$ we have by the assumptions and standard use of H\"older's and Markov's inequalities
\begin{equs}
\E&\sup_{t\in[0,\tau_1]}|X_t-X^n_t|^p\lesssim \E\sup_{t\in[0,\tau_1]}|X_t^1-X^{1,n}_t|^p+\big(\bP(X^{1} \text{ exits }A^1_{3\delta}\text{ before }\tau_1) \big)^{1/q'}
\\
&\lesssim n^{-\alpha}+\big(\bP(X^{n,1} \text{ exits }A^1_{2\delta}\text{ before }\tau_1)\big)^{1/q'} + \big(\bP(\sup_{t\in[0,\tau_1]}|X_t^1-X^{1,n}_t|\geq\delta)\big)^{1/q'}
%\big(\delta^{-p}\E\sup_{t\in[0,\tau_1]}|X_t^1-X^{1,n}_t|^p\big)^{1/q'}
\\
&\lesssim n^{-\alpha} +\big(\bP(\|X^n\|_{C^{\kappa}[0,1]}\geq \delta n^\kappa)\big)^{1/q'}+ n^{-\alpha/q'}
\\
&\lesssim n^{-\alpha/q'}+n^{-\kappa q/q'}.
\end{equs}
The proportionality constants in $\lesssim$ always depend only on the parameters of $N$ mentioned above and $q$.
Choosing $q$ large enough we get a bound of order $n^{-\alpha+\eps}$.
Similarly, by the strong Markov property, we have for any $k\geq 0$ that
\begin{equ}
\E\sup_{t\in[\tau_k,\tau_{k+1}]}|X_t-X^n_t|^p\lesssim n^{-\alpha+\eps}.
\end{equ}
Therefore, for any $m\in\N$ and $q\in(1,\infty)$,
\begin{align*}
\E\sup_{t\in[0,1]}|X_t-X_t^n|^p 
&\lesssim \sum_{k=0}^{m-1}\E\sup_{t\in[\tau_k,\tau_{k+1}]}|X_t-X^n_t|^p+\E\big(\sup_{t\in[0,1]}|X_t-X_t^n|^p \bone_{(\tau_m<1)}\big)
\\&\lesssim m n^{-\alpha+\eps}+\big(\bP(\tau_m<1)\big)^{1/q'}
\\&\lesssim m n^{-\alpha+\eps}+\big(\bP(\|X^n\|_{C^\kappa[0,1]}\geq\delta m^\kappa)\big)^{1/q'}
\\
&\lesssim m n^{-\alpha+\eps}+m^{-\kappa q/q'}.
\end{align*}
Choosing $m=n^\eps$ and then $q$ large enough, we get the claim with $2\eps$ in place of $\eps.$

\section{Preliminaries}\label{sec:prelim}
\subsection{Notation}
\paragraph*{Function spaces}
For any function $f\colon Q\to V$, where $Q\subset \R^k$ is a Borel set and $(V,|\cdot|)$ is a  normed space, with the notation $\N_0:= \N \cup\{0\}$,  let us set the (semi-)norms
\begin{equs}
\|f\|_{C^0(Q,V)}&=\sup_{x\in Q}|f(x)|\,;& &\\
\,[f]_{C^\gamma(Q,V)}&=\sum_{\substack{\ell\in\N_0^k\\|\ell|_1=\hat\gamma}}\sup_{x\neq y\in Q}\frac{|\d^{\ell}f(x)-\d^{\ell}f(y)|}{|x-y|^{\bar\gamma}}\,,&\qquad&\gamma>0,\,\gamma=\hat\gamma+\bar\gamma,\,\hat\gamma\in\N_0,\bar\gamma\in(0,1];\\
\|f\|_{C^\gamma(Q,V)}&=\sum_{\substack{\ell\in\N_0^k\\|\ell|_1< \gamma}} \|\D^\ell f\|_{C^0(Q,V)}+[f]_{C^\gamma(Q,V)},&\qquad &\gamma>0.
\end{equs}
In the above $|\ell|_1=\ell_1+\ldots+\ell_k$ if $\ell=(\ell_1,\ldots,\ell_k)\in \N_0^k$.
By $C^\gamma(Q,V)$ we denote the space of all measurable functions $f : Q \to V$ such that $\|f\|_{C^\gamma(Q,V)}<\infty$.

We will also denote $\bB=C^0$,  emphasizing
that elements of $C^0$ need not be continuous, but only bounded measurable (and avoiding the confusion with the space of bounded continuous functions). In addition, notice that the elements of $\bB$ are function rather than equivalent classes. 
Similarly, for $k \in \N$, $C^k$ functions are $(k-1)$-times continuously differentiable and the derivatives of $(k-1)$-order are Lipschitz continuous. For $\gamma \in \R_+ \setminus \N_0$,  $C^\gamma$ are  of course the usual H\"older spaces. 
When the domain $Q$ is $\R^d$ and/or the target space $V$ is $\R^d$ or $\R^{d\times d}$, they are suppressed from the notation.
Moreover, in $\R^d$ or $\R^{d\times d}$, $|\cdot|$ is understood to be the Euclidean norm.
For $\gamma<0$, we denote by $C^\gamma$ the space of all tempered distributions $f $ such that 
\begin{equs}
\|f\|_{C^\gamma} := \sup_{t \in (0,1)} t^{-\gamma/2} \| \cP_t f\|_{L_\infty(\R^d)} < \infty, 
\end{equs}
where $(\mathcal{P}_t) _{t \geq 0}$ is the heat semigroup associated to the standard Gaussian kernel $p_t(x)=(2 \pi t)^{-d/2}e^{-|x|^2/(2t)}$.  Notice that for $\gamma_1, \gamma_2 \in  \mathbb{R}$ with $\gamma_1 < \gamma_2$ we have the continuous embedding $C^{\gamma_2} \hookrightarrow C^{\gamma_1}$. Recall the definition of $\dot W^{\alpha}_p$ from \eqref{eq:Sobolev}.
 The following simple property is used in some of the remarks in the introduction.
\begin{lemma}\label{lem.triv.em}
	For $\alpha, \theta\in(0,1)$, $m\in[1,\infty)$, one has the inclusion 
	$\bB\cap \dot{W}^\alpha_m\subset \bB\cap \dot{W}^{\alpha \theta}_{m/\theta}$. 
\end{lemma}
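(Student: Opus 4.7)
The plan is to reduce this to a direct pointwise interpolation using the key algebraic observation that the exponent in the denominator of the Gagliardo seminorm is invariant under the scaling $(\alpha, m) \mapsto (\alpha\theta, m/\theta)$, namely
\begin{equation*}
d + (\alpha\theta)\cdot(m/\theta) = d + \alpha m.
\end{equation*}

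First, I would write out the target seminorm explicitly from the definition \eqref{eq:Sobolev} with parameters $(\alpha\theta, m/\theta)$, so that
\begin{equation*}
[f]^{m/\theta}_{\dot W^{\alpha\theta}_{m/\theta}} = \int_{\R^d}\int_{\R^d} \frac{|f(x)-f(y)|^{m/\theta}}{|x-y|^{d+\alpha m}}\,dx\,dy,
\end{equation*}
where I have used the identity above to rewrite the denominator. Next, I would split the numerator via
\begin{equation*}
|f(x)-f(y)|^{m/\theta} = |f(x)-f(y)|^{m}\cdot |f(x)-f(y)|^{m(1-\theta)/\theta},
\end{equation*}
and use $\|f\|_{C^0} < \infty$ (i.e.\ $f \in \bB$) to bound the second factor by $(2\|f\|_{C^0})^{m(1-\theta)/\theta}$, which is a finite constant since $\theta \in (0,1)$ ensures the exponent is nonnegative.

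Plugging this in and recognising the remaining double integral as $[f]^{m}_{\dot W^{\alpha}_{m}}$, I obtain
\begin{equation*}
[f]^{m/\theta}_{\dot W^{\alpha\theta}_{m/\theta}} \leq (2\|f\|_{C^0})^{m(1-\theta)/\theta}\,[f]^{m}_{\dot W^{\alpha}_{m}},
\end{equation*}
and raising both sides to the power $\theta/m$ yields the interpolation bound
\begin{equation*}
[f]_{\dot W^{\alpha\theta}_{m/\theta}} \leq (2\|f\|_{C^0})^{1-\theta}\,[f]^{\theta}_{\dot W^{\alpha}_{m}},
\end{equation*}
which, combined with the trivial fact that $f \in \bB$, establishes the claimed inclusion. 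There is no real obstacle here; the only subtle point is to notice the exact cancellation in the denominator exponent, after which the statement follows from a one-line pointwise estimate using boundedness.
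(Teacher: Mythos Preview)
Your proof is correct and is essentially identical to the paper's own argument: both observe the cancellation $d+(\alpha\theta)(m/\theta)=d+\alpha m$ in the Gagliardo denominator, split $|f(x)-f(y)|^{m/\theta}$ into a factor $|f(x)-f(y)|^m$ times a remainder bounded via $\|f\|_{\bB}$, and conclude the interpolation inequality $[f]_{\dot W^{\alpha\theta}_{m/\theta}}\lesssim \|f\|_{\bB}^{1-\theta}[f]_{\dot W^{\alpha}_{m}}^{\theta}$.
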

\begin{proof}
	Let $f$ be a bounded function in $\dot{W}^\alpha_p$. We have
	\begin{align*}
		[f]_{\dot{W}^{\alpha \theta}_{m/\theta}(\R^d)}
		&=\big(\int_{\R^d}\int_{\R^d}\frac{|f(x)-f(y)|^{m/ \theta}}{|x-y|^{d+\alpha m}}\,dx\,dy\big)^{\theta/m},
		\\&\le2\|f\|_{\bB}^{1- \theta} \big(\int_{\R^d}\int_{\R^d}\frac{|f(x)-f(y)|^{m}}{|x-y|^{d+\alpha m}}\,dx\,dy\big)^{\theta/m},
	\end{align*}
	which means that $[f]_{\dot{W}^{\alpha \theta}_{m/\theta}(\R^d)}\le 2\|f\|_{\bB}^{1- \theta}[f]_{\dot{W}^{\alpha}_{m}(\R^d)}^\theta$, completing the proof. 
\end{proof}

\paragraph*{Matrices} We use the following conventions and basic properties of $d\times d$ matrices. By $I$ we denote the identity matrix and by $A^\ast$ the transpose of $A$. The operator norm, determinant and trace of $A$ are denoted respectively by $\|A\|$, $\det(A)$ and $\tr(A)$. For two symmetric matrices $A_1,A_2$, by $A_1\prec A_2$ we mean that $A_2-A_1$ is non-negative definite. 
Recall that the determinant is a differentiable function in a neighbourhood of the identity matrix, and therefore there exists a constant $N=N(d)$ such that $|1-\det(I+A)|\leq N \|A\|$ for $\|A\|\leq 1/2$. 
As a simple consequence, for any fixed $K$, there exists a constant $N=N(d,K)$ such that on the set $\{A:\|A\|\leq K\}$, one has 
\begin{equ}\label{eq:simple-matrix-bound}
|1-|\det (I+A)|^{1/2}|\leq N \|A\|.
\end{equ}
Indeed, by the previously mentioned bound 
there exists a $\delta=\delta(d)$ such that $|1- \text{det}(I+A)| \leq 1/2$ for $\| A \| \leq \delta$.  Since the square root function is Lipschitz on $[1/2, 3/2]$ (even with Lipschitz constant $1$),  for all $A$ with $\| A\| \leq \delta$ we have 
\begin{equs}
|1- |\text{det}(I+A)|^{1/2} |  \leq  |1- \text{det}(I+A)| \leq N(d) \| A\|.
\end{equs}
This is the claimed bound for $\| A\| \leq \delta$.
On the other hand,  if $K\geq \delta$ and  $ \delta \leq \| A\| \leq K$, then trivially we have 
\begin{equs}
|1- |\text{det}(I+A)|^{1/2} |   \leq 1+ N'(d) \| I+A\|^{d/2} \leq N''(d, \delta, K) \| A \|.
\end{equs}
\paragraph*{Convention on constants} In proofs (and only in proofs) of theorems/lemmas/propositions we use the shorthand $f\lesssim g$ to mean that there exists a constant $N$ such that $f\leq N g$, and that $N$ does \emph{not} depend on any other parameters than the ones specified in the theorem/lemma/proposition. Whenever a constant depends on any other parameters, they are indicated by parentheses in $N(\cdot)$.

\subsection{Heat kernel bounds}\label{sec:HK}
For a symmetric positive definite $d \times d$ matrix $\Sigma$, let $p_\Sigma$ be the density of a $d$-dimensional Gaussian vector with mean zero and covariance matrix $\Sigma$ defined by
\begin{equ}\label{eq:p-def}
p_\Sigma(x)=\frac{(\det\Sigma^{-1})^{1/2}}{(2\pi)^{d/2}}\exp\big(-\tfrac{1}{2}x^*\Sigma^{-1}x\big),\quad x\in\R^d.
\end{equ}
Let $\Sigma^{1/2}$ be a square matrix such that $\Sigma=\Sigma^{1/2}(\Sigma^{1/2})^*$ and introduce the notation $x_\Sigma=\Sigma^{-1/2}x$ where $\Sigma^{-1/2}=(\Sigma^{1/2})^{-1}$. The exponential in \eqref{eq:p-def} can be rewritten as $\exp(-\tfrac{1}{2}|x_\Sigma|^2)$. It is then straightforward to see that
for any $k\geq 0$,
\begin{equ}\label{eq:easy-gaussian}
|x_\Sigma|^kp_{\Sigma}(x)\leq Np_{\Sigma/2}(x),
\end{equ}
where $N$ depends only on $k$ and $d$.
For $t>0$, we use the shorthand $p_t=p_{tI}$.
For a measurable function $f\colon\R^d\to\R$ we write $\cP_\Sigma f:=p_\Sigma\ast f$ and $\cP_t f:=p_t\ast f$. For all $t>0$, $\theta\in[1,\infty]$, and $\alpha\geq 0$ one has the bounds
\begin{gather}
\|p_t\|_{L_\theta(\R^d)}\leq N t^{-\frac d2(1-\frac1\theta)}, \label{eq:HK-Lp-1}
\\
\|\nabla p_t\|_{L_\theta(\R^d)}\leq N t^{-\frac12-\frac d2(1-\frac1\theta)}, \label{eq:HK-Lp-1-and-a-half}
\\
\big\||\cdot|^\alpha p_t(\cdot)\big\|_{L_\theta(\R^d)}\leq N' t^{\frac\alpha2-\frac d2(1-\frac1\theta)},\label{eq:HK-Lp-2}
\\
\big\||\cdot|^\alpha \nabla^2  p_t(\cdot)\big\|_{L_\theta(\R^d)}\leq N' t^{\frac\alpha2-1-\frac d2(1-\frac1\theta)},\label{eq:HK-Lp-3}
\end{gather}
with some constants $N=N(d, \theta)$ and $N'=N'(\alpha, d, \theta)$.

\begin{lemma}      \label{lem:hessian-seminorm}
Let $p \in [1, \infty)$ and $\alpha \in (0,1)$. Then, for all $f \in \dot{W}^\alpha_p(\R^d)$, $t\in(0,1]$,
\begin{equation*}
\big\| \nabla^2  \cP_t f \big\|_{L_p(\R^d)} \leq N t^{-1+\frac\alpha2} [ f ]_{\dot{W}^\alpha_p(\R^d)},
\end{equation*}
where $N$ depends only on $\alpha, p$ and $d$. 
\end{lemma}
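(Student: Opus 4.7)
The plan is to exploit the fact that $\int_{\R^d}\nabla^2 p_t(z)\,dz=0$ (which follows by differentiating $\int p_t=1$, or by integration by parts and the rapid decay of $p_t$ and its derivatives) in order to insert an $x$-centred zero inside the convolution. Concretely, I would first write
\begin{equation*}
\nabla^2\cP_t f(x)=\int_{\R^d}\nabla^2 p_t(x-y)\bigl(f(y)-f(x)\bigr)\,dy,
\end{equation*}
so that the difference $f(y)-f(x)$ appears and the Sobolev semi-norm can then be brought in.

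Next I would apply H\"older's inequality in $y$ with the weight $|x-y|^{(d+\alpha p)/p}$ inserted and divided, pairing the exponents $(p,p')$ with $1/p+1/p'=1$:
\begin{align*}
|\nabla^2\cP_t f(x)|
&\le\Bigl(\int_{\R^d}\frac{|f(y)-f(x)|^p}{|x-y|^{d+\alpha p}}\,dy\Bigr)^{1/p}
\Bigl(\int_{\R^d}|\nabla^2 p_t(x-y)|^{p'}|x-y|^{(d+\alpha p)p'/p}\,dy\Bigr)^{1/p'}.
\end{align*}
The second factor is translation invariant in $x$, and equals $\bigl\||\cdot|^{d/p+\alpha}\nabla^2 p_t\bigr\|_{L_{p'}(\R^d)}$. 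Applying the Gaussian bound \eqref{eq:HK-Lp-3} with $\theta=p'$ and exponent $d/p+\alpha$ gives the $t$-scaling
\begin{equation*}
\bigl\||\cdot|^{d/p+\alpha}\nabla^2 p_t\bigr\|_{L_{p'}(\R^d)}\le N t^{(d/p+\alpha)/2-1-d/(2p)}=N t^{-1+\alpha/2},
\end{equation*}
where the identity $1-1/p'=1/p$ makes the $d/p$ factors cancel cleanly; this cancellation is the reason for choosing exactly this weight.

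Finally I would raise the pointwise H\"older estimate to the $p$-th power and integrate in $x$, using Fubini on the (now $x$-independent) second factor, which yields
\begin{equation*}
\|\nabla^2\cP_t f\|_{L_p(\R^d)}^p\le N t^{p(-1+\alpha/2)}[f]_{\dot W^\alpha_p(\R^d)}^p,
\end{equation*}
and take $p$-th roots. The main obstacle I expect is purely bookkeeping, namely calibrating the Hölder weight so that the exponent from \eqref{eq:HK-Lp-3} reduces exactly to $-1+\alpha/2$ without leftover $d/p$ terms; a minor technical point is justifying the zero-insertion and the differentiation under the convolution for $f\in\dot W^\alpha_p$, which can be handled by a standard approximation by Schwartz functions (noting that all estimates only involve the semi-norm $[f]_{\dot W^\alpha_p}$ and so pass to the limit).
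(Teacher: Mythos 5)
Your proof is correct and is essentially the same argument as the paper's: insert $f(x)$ via the zero-mean property of $\nabla^2 p_t$, apply H\"older with the weight $|\cdot|^{d/p+\alpha}$ (the paper writes the convolution as $\int(\nabla^2 p_t)(y)f(x-y)\,dy$ rather than $\int\nabla^2 p_t(x-y)f(y)\,dy$, a trivial change of variables), and invoke \eqref{eq:HK-Lp-3} to get the $t^{-1+\alpha/2}$ scaling. Your exponent bookkeeping matches the paper's choice $\beta = d/p+\alpha$ exactly.
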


\begin{proof}
For $p,q \in [1, \infty)$, $\frac1q+\frac1p=1$, by H\"older's inequality, we have for any $\beta >0$ 
\begin{equs}
\big\| \nabla^2  \cP_t f \big\|_{L_p(\R^d)}^p & = \int_{\R^d} \Big| \int_{\R^d} ( \nabla^2 p_t)(y) f(x-y) \, dy \Big|^p \, dx 
\\
& = \int_{\R^d} \Big| \int_{\R^d} ( \nabla^2 p_t)(y) \Big(  f(x-y) -f(x) \Big) \, dy \Big|^p \, dx 
\\
& \leq \big\| | \cdot |^\beta \nabla^2 p_t(\cdot) \big\|_{L_q(\R^d)} ^p \int_{\R^d} \int_{\R^d} \frac{|f(x-y)-f(x)|^p}{|y|^{p\beta}} \, dx dy.
\end{equs}
Choosing $\beta = d/p+ \alpha$ gives 
\begin{equs}
\big\| \nabla^2  \cP_t f \big\|_{L_p(\R^d)}^p \leq  \|  | \cdot |^\beta \nabla^2 p_t(\cdot)  \|_{L_q(\R^d)} ^p[ f ]^p _{\dot{W}^\alpha_p(\R^d)},
\end{equs}
and the claim follows by \eqref{eq:HK-Lp-3}. The case $p = \infty$ follows similarly. 
\end{proof}

\begin{lemma}     \label{lem:old-heat-estimate}
Let $ \beta\in \R $,   $\alpha\geq (-\beta) \vee 0$. There exists a constant $N$ depending only on $\alpha, \beta$ and $d$ such that for all $t \in (0,1]$ and for all $f\in C^\beta(\Rd)$,
$$
\|\cP_tf\|_{C^{\beta+\alpha} (\R^d)}\le N t^{-\alpha/2} \|f\|_{C^\beta(\R^d)}. 
$$
\end{lemma}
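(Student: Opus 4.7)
The bound is a classical heat-semigroup smoothing estimate, and the plan is to split into the positive-regularity case $\beta \geq 0$ and the negative-regularity case $\beta < 0$, reducing the latter to the former via the semigroup property.

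\textbf{Positive case ($\beta \geq 0$).}  I will write $\beta = n + \beta_0$ with $n = \lfloor \beta \rfloor$ and $\beta_0 \in [0,1)$, and likewise $\beta + \alpha = m + \alpha_0$ with $m \in \N_0$ and $\alpha_0 \in (0,1]$.  Unpacking the definition of the $C^{\beta+\alpha}$ norm, it suffices to bound $\|\nabla^k \cP_t f\|_{L_\infty}$ for each $0 \leq k \leq m$ and the seminorm $[\nabla^m \cP_t f]_{C^{\alpha_0}}$. For $k \leq n$ I commute derivatives past the semigroup, $\nabla^k \cP_t f = \cP_t(\nabla^k f)$, and use $L_\infty$-contractivity together with $t^{-\alpha/2}\geq 1$ for $t \leq 1$.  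For $k > n$, using $\int \nabla^{k-n} p_t = 0$ I rewrite
\begin{equation*}
\nabla^k \cP_t f(x) = \int \nabla^{k-n} p_t(x-y) \bigl( \nabla^n f(y) - \nabla^n f(x) \bigr)\, dy
\end{equation*}
and bound it by $[\nabla^n f]_{C^{\beta_0}}$ against a moment estimate of the type \eqref{eq:HK-Lp-3} adapted to derivatives of order $k-n$; a direct computation of the exponents then produces $t^{-\alpha/2}$ (up to a harmless positive power of $t$ that I absorb via $t \leq 1$).  The fractional seminorm $[\nabla^m \cP_t f]_{C^{\alpha_0}}$ follows by the standard dichotomy: bound $|\nabla^m \cP_t f(x) - \nabla^m \cP_t f(y)|$ by $\min\bigl(|x-y|\, \|\nabla^{m+1} \cP_t f\|_{L_\infty},\, 2\|\nabla^m \cP_t f\|_{L_\infty}\bigr)$, divide by $|x-y|^{\alpha_0}$, and pick the better of the two options according to whether $|x-y|^2 \leq t$ or not.

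\textbf{Negative case ($\beta < 0$).}  The hypothesis $\alpha \geq -\beta$ gives $\alpha + \beta \geq 0$, so the target norm is a classical Hölder norm.  I split $\cP_t = \cP_{t/2} \cP_{t/2}$: the first factor turns an element of $C^\beta$ into an $L_\infty$ function with a quantitative bound supplied by the definition of the negative-order Hölder norm,
\begin{equation*}
\|\cP_{t/2} f\|_{L_\infty} \leq (t/2)^{\beta/2} \|f\|_{C^\beta},
\end{equation*}
valid since $t/2 \in (0,1)$.  The second factor is handled by the positive case applied with base regularity $0$ and smoothing gain $\alpha + \beta$, yielding $\|\cP_{t/2} g\|_{C^{\beta+\alpha}} \leq N (t/2)^{-(\alpha+\beta)/2} \|g\|_{L_\infty}$. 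Composing the two estimates with $g = \cP_{t/2} f$ gives the desired bound $\|\cP_t f\|_{C^{\beta+\alpha}} \leq N' t^{-\alpha/2} \|f\|_{C^\beta}$.

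\textbf{Main obstacle.}  There is no conceptual difficulty; all of the needed ingredients — the heat-kernel moment estimates \eqref{eq:HK-Lp-1}–\eqref{eq:HK-Lp-3}, the semigroup property, and the definition of $C^\beta$ for $\beta<0$ — are already available in the preceding subsection.  The only slightly fiddly point is the bookkeeping in the positive case, where one must keep track of the integer and fractional parts of $\beta$ and $\beta+\alpha$ and verify that the arithmetic of the exponents really produces exactly $t^{-\alpha/2}$; this is routine once the cancellation $\int \nabla^{k-n} p_t = 0$ and the Hölder-seminorm dichotomy are invoked.
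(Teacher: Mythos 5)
Your proof is correct, and for the negative-regularity case it takes a genuinely different and arguably cleaner route than the paper.

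For $\beta \ge 0$ both arguments are the same in spirit: the paper dismisses this as ``well known estimates that follow from direct computations with Gaussian densities,'' and your outline (commute derivatives below order $\hat\beta$, use the cancellation $\int \nabla^{j} p_t = 0$ plus moment bounds of the type \eqref{eq:HK-Lp-3} for higher derivatives, and the standard $|x-y|^2 \lessgtr t$ dichotomy for the top fractional seminorm) is exactly that computation. The one minor wrinkle is the edge case $\beta \in \N_0$, where $\beta_0 = 0$ and ``$[\nabla^n f]_{C^{\beta_0}}$'' should be read as the $L_\infty$-oscillation of $\nabla^n f$; the estimate still goes through with $\|\nabla^{k-n} p_t\|_{L_1} \lesssim t^{-(k-n)/2}$.

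For $\beta < 0$ the paper instead cites \cite[Proposition~3.7]{ButDarGEr} for the range $\alpha+\beta \in [0,1]$ and then lifts to $\alpha+\beta > 1$ by a semigroup-and-induction argument, writing out the $(1,2]$ case explicitly. Your reduction is a single clean step: split $\cP_t = \cP_{t/2}\cP_{t/2}$, use the \emph{definition} of $\|\cdot\|_{C^\beta}$ (valid since $t/2 \in (0,1)$) to get $\|\cP_{t/2}f\|_{L_\infty} \le (t/2)^{\beta/2}\|f\|_{C^\beta}$, then apply the already-proved positive case with base $0$ and gain $\alpha+\beta \ge 0$. The exponent arithmetic $-(\alpha+\beta)/2 + \beta/2 = -\alpha/2$ closes it, with no external citation and no induction on $\lceil \alpha+\beta \rceil$. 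This buys self-containedness and avoids the somewhat awkward case split in the paper's proof; the paper's route, in turn, avoids spelling out the positive case in detail by outsourcing the hard part to an already-established proposition.
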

\begin{proof}
For $\beta \geq 0$ these are well known estimates that follow from direct computations with Gaussian densities.
If $\beta< 0$, then the case $ \alpha +\beta \in [0,1]$ is shown in \cite[Proposition 3.7]{ButDarGEr}. The case $\alpha+ \beta>1$ now follows by using the semi-group property. If $\alpha+ \beta \in (1, 2]$,  for example, then 
\begin{equs}
\|\cP_tf\|_{C^{\beta+\alpha} (\R^d)} & \leq \| \nabla \cP_{\frac{t}{2}+\frac{t}{2}} f\|_{C^{\beta+\alpha-1} (\R^d)}  + \|\cP_tf \|_{\bB(\R^d)}
\\
& \lesssim t^{ -(\beta+\alpha-1)/2}\| \cP_{\frac{t}{2}} f\|_{C^1 (\R^d)} +t^{\beta/2}\|f\|_{C^\beta  (\R^d)} 
\\
& \lesssim  t^{ -(\beta+\alpha-1)/2} t^{(\beta-1)/2} \|f\|_{C^\beta (\R^d)} + t^{\beta/2}\|f\|_{C^\beta  (\R^d)} 
\\
& \lesssim  t^{-\alpha/2} \|f\|_{C^\beta(\R^d)}.
\end{equs} 
The case $\alpha+\beta \in (k, k+1]$, for $k>1$, follows by induction. 
\end{proof}
Let $\Delta$ denote the Laplacian on $\Rd$. The next lemma is folklore, but since we did not find an exact reference in this form, we provide a short proof.
\begin{lemma}\label{lem:schauder}
Let $\beta \in \R \setminus \Z $. There exists a constant $N$ depending on $\beta$ such that for all $f \in C^\beta(\Rd)$
\begin{equation*}
\| (1-\Delta)^{-1} f  \|_{C^{\beta+2}(\Rd)} \leq N \| f\|_{C^\beta(\Rd)}. 
\end{equation*}
\end{lemma}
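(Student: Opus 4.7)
My plan is to exploit the resolvent representation
\begin{equs}
u := (1-\Delta)^{-1} f = \int_0^\infty e^{-t}\, \cP_{2t} f \, dt,
\end{equs}
which holds because $\cP_t = e^{t\Delta/2}$ identifies $(1-\Delta)^{-1}$ with the Laplace transform of the heat semigroup at parameter $1$ (easily verified on the Fourier side). Applying Lemma \ref{lem:old-heat-estimate} to the integrand and integrating in $t$ should yield the desired Schauder bound, provided the $t$-singularities are managed carefully. Since $\beta \notin \Z$ forces $\beta + 2 \notin \Z$ as well, I will treat the cases $\beta + 2 > 0$ and $\beta + 2 < 0$ separately, according to whether the target norm $\|\cdot\|_{C^{\beta+2}}$ is the classical H\"older norm or is defined via the heat semigroup.

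In the positive case, write $\beta + 2 = k + \bar\gamma$ with $k \in \N \cup \{0\}$ and $\bar\gamma \in (0,1)$. For each $j \in \{0, \ldots, k\}$, Lemma \ref{lem:old-heat-estimate} with starting regularity $\beta$ and target $j$ gives
\begin{equs}
\|\nabla^j \cP_{2t} f\|_{\bB} \le \|\cP_{2t} f\|_{C^j} \lesssim (2t)^{(\beta - j)/2}\, \|f\|_{C^\beta},
\end{equs}
and $(\beta - j)/2 > -1$ makes $\int_0^\infty e^{-t}(2t)^{(\beta - j)/2}\, dt$ converge, which controls $\|\nabla^j u\|_{\bB}$. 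To control the H\"older seminorm $[\nabla^k u]_{C^{\bar\gamma}}$, I plan to split the $t$-integral at $t \sim |x-y|^2$: for small $t$ the triangle inequality plus the bound just mentioned produces the correct scaling $|x-y|^{\bar\gamma}$ after integration, and for large $t$ the mean value theorem applied with the analogous Lemma~\ref{lem:old-heat-estimate} bound on $\nabla^{k+1} \cP_{2t} f$ yields the same $|x-y|^{\bar\gamma}$ factor.

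In the negative case $\beta + 2 < 0$, I need to verify $\sup_{t \in (0,1)} t^{-(\beta + 2)/2} \|\cP_t u\|_{\bB} \lesssim \|f\|_{C^\beta}$. By Fubini on the resolvent representation, $\cP_t u = \int_0^\infty e^{-s}\, \cP_{t + 2s} f \, ds$. Combining the defining bound $\|\cP_\tau f\|_{\bB} \lesssim \tau^{\beta/2}\|f\|_{C^\beta}$ for $\tau \in (0,1)$ with the $L_\infty$-contractivity of $\cP$ for $\tau \ge 1$, and splitting the $s$-integral at $s = 1/2$, the small-$s$ part should produce the required factor $t^{(\beta+2)/2}$ (via an elementary computation of $\int_t^{t+1} \tau^{\beta/2}d\tau$), while the large-$s$ part is handled by the exponential decay.

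The main obstacle is the H\"older seminorm estimate in the positive case when $\beta + 2$ sits just above an integer, where the $t$-integrability at $0$ becomes marginal; however, the split at $t \sim |x-y|^2$ is precisely tuned to the Gaussian scaling and should absorb the singularity into the desired $|x-y|^{\bar\gamma}$ factor. I do not anticipate any other substantive difficulty.
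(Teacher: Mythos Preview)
Your approach is correct and yields a valid proof, though it differs from the paper's in how the H\"older seminorm is handled when $\beta+2>0$. Both proofs start from the resolvent identity $u=\int_0^\infty e^{-t}\cP_{2t}f\,dt$ (the paper writes $\cP_t$, which is off by a harmless factor since here $\cP_t=e^{t\Delta/2}$), and for $\beta<-2$ the two arguments are essentially identical. For $-2<\beta<0$, however, the paper invokes the Besov-type characterisation $[g]_{C^\gamma}\lesssim\sup_{\eps>0}\eps^{(\lfloor\gamma\rfloor+1-\gamma)/2}\|\nabla^{\lfloor\gamma\rfloor+1}\cP_\eps g\|_{L_\infty}$ from Krylov's book and pushes $\cP_\eps$ inside the resolvent integral, treating the ranges $\beta\in(-1,0)$ and $\beta\in(-2,-1)$ separately; your time-splitting at $t\sim|x-y|^2$ is more self-contained (no external characterisation needed) and covers all $\beta+2>0$, including $\beta>0$ where the paper simply defers to ``usual Schauder estimates'', in one stroke.

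One small point to tighten: Lemma~\ref{lem:old-heat-estimate} is stated only for $t\in(0,1]$, so the bound $\|\cP_{2t}f\|_{C^j}\lesssim (2t)^{(\beta-j)/2}\|f\|_{C^\beta}$ does not literally extend to all $t>0$. For $2t>1$ use instead $\|\cP_{2t}f\|_{C^j}\le\|\cP_1 f\|_{C^j}\lesssim\|f\|_{C^\beta}$ and let the factor $e^{-t}$ take care of the tail. Similarly, when $\beta>0$ and $j<\beta$ the stated power bound is actually false near $t=0$ (the right-hand side vanishes while the left does not), but the trivial estimate $\|\cP_{2t}f\|_{C^j}\le\|f\|_{C^\beta}$ is what you want there anyway.
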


\begin{proof}
If $\beta \in\R \setminus \Z$ and $\beta>0$, these are the usual Schauder estimates. Hence, we only deal with the case $\beta< 0$. First, let us look at the case $-1 < \beta <0$.  Recall the following simple properties of $\cP_t$: it commutes with differentiation $\nabla \cP_t f=\cP_t\nabla f$, it satisfies the semigroup property $\cP_t\cP_s=\cP_{t+s}$, and one has
$$
(1-\Delta)^{-1}f = \int_0^\infty e^{-t}  \cP_t f  \, dt.
$$
Combining with  Lemma \ref{lem:old-heat-estimate} gives 
\begin{equs}
\| (1-\Delta)^{-1}f \|_{C^1 (\R^d)} \lesssim  \int_0^1  t^{(\beta-1)/2}\|  f \|_{C^\beta (\R^d)} \, dt +  \sup_{t \geq 1} \| \cP_t f \|_{C_1(\R^d)}. 
\end{equs}
For the second term on the right hand side we have the following. Suppose that $\gamma \geq 0$. Then, for all $t \geq 1$, we have 
\begin{equs}
\|\cP_tf \|_{C^\gamma(\R^d)} \leq \|\cP_1f \|_{C^\gamma(\R^d)} \lesssim  \|\cP_{1/2}f \|_{L_\infty (\R^d)}  \lesssim \| f \|_{C^\beta(\R^d)},
\end{equs}
where for the second inequality we used the semi-group property and Lemma \ref{lem:old-heat-estimate} and for the last one the definition of $C^\beta$. 
Consequently, 
$\| (1-\Delta)^{-1}f \|_{C^1 (\R^d)} \leq N  \| f \|_{C^\beta(\R^d)}$ and we only have to show (recall that $\beta+1 \in (0,1)$) that 
$$
[ \nabla (1-\Delta)^{-1}f ]_{C^{\beta+1} (\R^d)} \lesssim  \| f \|_{C^\beta(\R^d)}.
$$
It is known that (see, e.g., \cite[Exercise 3.4.4, p.39 ]{KrylovHolder} )
\begin{align*}
[ \nabla(1-\Delta)^{-1}f ] _{C^{\beta+1} (\R^d)} \lesssim \sup_{ \eps >0} \eps^{-\beta/2}\|\nabla  \cP_\eps  \nabla (1-\Delta)^{-1}f \|_{L_\infty(\R^d)}.
\end{align*}
Let $\eps  \in (0,1]$. By the above mentioned properties of $\cP$ and by Lemma \ref{lem:old-heat-estimate} we have
\begin{equs}
\|\nabla \cP_\eps  \nabla (1-\Delta)^{-1}f \|_{L_\infty(\R^d)} & \leq \int_0^ 1\| \nabla  \cP_{\frac{t +\eps}{2}}   \nabla\cP_{\frac{t +\eps}{2}} f \|_{L_\infty(\R^d)}   \, dt +\int_1^\infty e^{-t} \|  \nabla^2  \cP_{t +\eps}    f \|_{L_\infty(\R^d)} \, dt
\\ 
& \lesssim  \int_0^1 (t+\eps)^{-1/2} \|    \nabla \cP_{\frac{t +\eps}{2}} f \|_{L_\infty(\R^d)} \, dt +  \|   f \|_{C^\beta(\R^d)}
\\
& \leq  \int_0^ \infty (t+\eps)^{-1/2} \|   \nabla \cP_{\frac{t +\eps}{2}} f \|_{L_\infty(\R^d)} \, dt +   \|   f \|_{C^\beta(\R^d)}
\\
& \lesssim  \int_0^ \infty (t+\eps)^{-1/2} (t+\eps)^{(\beta-1)/2}\|   f \|_{C^\beta(\R^d)}  \, dt +  \|   f \|_{C^\beta(\R^d)}
\\
& \lesssim \eps^{\beta/2} \|   f \|_{C^\beta(\R^d)}.
\end{equs}
We now show that the same estimate holds for $\eps >1$. From \eqref{eq:HK-Lp-1-and-a-half} we have $\|\nabla \cP_t g \|_{L_\infty}\le\|\nabla p_t\|_{L_1}\|g\|_{L_\infty} \lesssim t^{-1/2} \|g\|_{L_\infty}$ for all $t>0$. Consequently, for $\eps>1$, we have 
\begin{equs}
\|\nabla \cP_\eps  \nabla(1-\Delta)^{-1}f \|_{L_\infty(\R^d)} & \leq \int_0^ \infty e^{-t}\| \nabla  \cP_{\frac{t +\eps}{2}}   \nabla \cP_{\frac{t +\eps}{2}} f \|_{L_\infty(\R^d)}   \, dt 
\\
& \lesssim \int_0^ \infty e^{-t} (t+\eps)^{-1/2}\|   \nabla \cP_{\frac{t +\eps}{2}} f \|_{L_\infty(\R^d)}   \, dt 
\\
& \lesssim \eps^{-1/2} \| \cP_ {\frac{1}{2}} f\|_{C^1(\R^d)} 
\\
& \lesssim  \eps^{\beta/2} \|   f \|_{C^\beta(\R^d)}.              \label{eq:whatever}
\end{equs}
 Consequently, 
$$
[\D_i (1-\Delta)^{-1}f ] _{C^{\beta+1} (\R^d)}  \lesssim   \|   f \|_{C^\beta(\R^d)}.
$$
This combined with $\| (1-\Delta)^{-1}f \|_{C^1 (\R^d)} \lesssim   \| f \|_{C^\beta(\R^d)}$ shows that 
\begin{equs}
\| (1-\Delta)^{-1}f \|_{C^{2+\beta} (\R^d)} \lesssim   \| f \|_{C^\beta(\R^d)}.
\end{equs}
The case $-2< \beta< -1$ is treated similarly as above. One can easily see that 
$ \| (1-\Delta)^{-1}f \|_{L_\infty (\R^d)} \lesssim   \| f \|_{C^\beta(\R^d)}$ and what is left to be checked is that  $[ (1-\Delta)^{-1}f ]_{C^{\beta+2} (\R^d)} \lesssim   \| f \|_{C^\beta(\R^d)}$. This is very similar to the previous argument, with the difference that one uses 
$$
\| (1-\Delta)^{-1}f \|_{C^{2+\beta} (\R^d)}  \lesssim  \sup_{ \eps >0} \eps^{-\beta/2}\|\nabla^2  \cP_\eps  (1-\Delta)^{-1}f \|_{L_\infty(\R^d)},
$$
(see, e.g., \cite[Exercise 3.3.6, p.40]{KrylovHolder}) and that  in order to get a bound similar to \eqref{eq:whatever},  one needs the bound $\| \nabla^2 \cP_tg\|_{L_\infty} \leq N t^{-1} \|g\|_{L_\infty}$ for all $t >0$.

 Finally, for the case $ \beta < -2$,  we have for $\eps \in (0,1]$
\begin{equs}
&\| \cP_\eps (1-\Delta)^{-1}f \|_{L_\infty(\R^d)}
\\ & \leq \int_0^{1-\eps} e^{-t} \| \cP_{t+\eps} f \|_{L_\infty(\R^d)}+ \int_{1-\eps}^ \infty  e^{-t} \| \cP_{t+\eps} f \|_{L_\infty(\R^d)}
\\
& \lesssim  \int_0^{1-\eps} (t+\eps)^{\beta/2} \sup_{t \in (0, 1-\eps)} \big( (t+\eps)^{-\beta/2}\|\cP_{t+ \eps} 
f \|_{L_\infty(\R^d)} \big)  \, dt  + \| f \|_{C^\beta(\R^d)}
\\
& \lesssim  \eps^{1+\beta/2}\| f \|_{C^\beta(\R^d)},
\end{equs}
which of course implies that 
\begin{equs}
\|  (1-\Delta)^{-1}f \|_{C^{\beta+2}(\R^d)}  \lesssim \| f \|_{C^\beta(\R^d)}.
\end{equs}
This finishes the proof. 
\end{proof}

\begin{lemma}\label{lem:Lp-bound}
Let $t> s>0$, $\alpha\in(0,1)$, $m\in[1,\infty)$, and $p\in[1,m]$. Then there exist constants $N$ and $N'=N'(\alpha)$ such that the following bounds hold for all $f\in L_m(\R^d)$ and $g\in \dot{W}^{\alpha}_{m}(\R^d)$:
\begin{equs}
\|f(B_t)\|_{L_p(\Omega)}&\leq N t^{-d/(2m)}\|f\|_{L_m(\R^d)},\label{eq:Lp-bound-1}
\\
\| g(B_t)-g(B_s) \|_{L_p(\Omega)} &\leq N' s^{-d/(2m)} |t-s|^{\alpha/2} [g]_{\dot{W}^\alpha_m(\R^d)}.\label{eq:Lp-bound-2}
\end{equs}
\end{lemma}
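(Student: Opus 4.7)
Both bounds will be obtained by explicit computation using that $B_t$ has density $p_t$ and that the pair $(B_s,B_t)$ has joint density $p_s(x)p_{t-s}(z-x)$.

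For the first bound \eqref{eq:Lp-bound-1}, I write $\E|f(B_t)|^p=\int p_t(x)|f(x)|^p\,dx$ and apply H\"older's inequality with conjugate exponents $m/p$ and $m/(m-p)$ (interpreting the latter as $\infty$ when $p=m$), which yields $\E|f(B_t)|^p\le \|p_t\|_{L_{m/(m-p)}}\|f\|_{L_m}^p$. The bound \eqref{eq:HK-Lp-1} with $\theta=m/(m-p)$ gives $\|p_t\|_{L_{m/(m-p)}}\le N t^{-dp/(2m)}$, and taking the $p$-th root gives the claim.

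For the second bound \eqref{eq:Lp-bound-2}, I use independence of increments to write $B_t=B_s+\tilde B_{t-s}$ with $\tilde B$ a Brownian motion independent of $B_s$, so that
\begin{equation*}
\E|g(B_t)-g(B_s)|^p=\int p_s(x)F(x)\,dx,\qquad F(x):=\int p_{t-s}(y)|g(x+y)-g(x)|^p\,dy.
\end{equation*}
The idea is to rewrite the integrand defining $F(x)$ as a product of $p_{t-s}(y)|y|^{p(d/m+\alpha)}$ and $|g(x+y)-g(x)|^p/|y|^{p(d/m+\alpha)}$ and apply H\"older in $y$ with conjugate exponents $m/(m-p)$ and $m/p$. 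The second factor raised to the power $m/p$ gives exactly the inner integrand of the Sobolev seminorm definition \eqref{eq:Sobolev}, while the first factor produces a Gaussian integral $\int p_{t-s}(y)^{m/(m-p)}|y|^{mp(d/m+\alpha)/(m-p)}\,dy$; a change of variables $y=(t-s)^{1/2}u$ shows that the $(t-s)$-dependent prefactor simplifies to a single power $(t-s)^{mp\alpha/(2(m-p))}$, after cancellations between the exponent coming from $\|p_{t-s}\|_{L_{m/(m-p)}}$-type scaling and the $d/m$ weight. Raising to the power $(m-p)/m$ yields the desired $(t-s)^{p\alpha/2}$.

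This reduces matters to bounding $\int p_s(x)B(x)^{p/m}\,dx$ where $B(x):=\int |g(x+y)-g(x)|^m|y|^{-d-\alpha m}\,dy$. A final H\"older application in $x$ with the same exponents, together with Fubini and $\int B(x)\,dx=[g]_{\dot W^\alpha_m}^m$, gives $\|p_s\|_{L_{m/(m-p)}}\cdot[g]_{\dot W^\alpha_m}^p\lesssim s^{-dp/(2m)}[g]_{\dot W^\alpha_m}^p$ via \eqref{eq:HK-Lp-1}. Taking $p$-th roots produces \eqref{eq:Lp-bound-2}. The borderline case $p=m$ is handled separately (and more directly) by the pointwise bound $|y|^{d+\alpha m}p_{t-s}(y)\le N|t-s|^{\alpha m/2}$ followed by $\|p_s\|_{L_\infty}\lesssim s^{-d/2}$. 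The main technical step is verifying that the $(t-s)$-exponents coming from the Gaussian moment integral cancel against those from $\|p_{t-s}\|_{L_{m/(m-p)}}$ to leave precisely $(t-s)^{\alpha p/2}$; this is a routine but careful bookkeeping computation.
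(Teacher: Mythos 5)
Your argument is correct, but it takes a noticeably more computational route than the one in the paper. The paper's proof begins with the single observation that on a probability space $\|\cdot\|_{L_p(\Omega)}\le\|\cdot\|_{L_m(\Omega)}$ for $p\le m$ (Jensen), which immediately reduces both bounds to the endpoint case $p=m$; at that point \eqref{eq:Lp-bound-1} is one line via $\||f|^m p_t\|_{L_1}\le\|f\|_{L_m}^m\|p_t\|_{L_\infty}$ and \eqref{eq:Lp-bound-2} is one line via the pointwise bound $\||\cdot|^{\alpha m+d}p_{t-s}(\cdot)\|_{L_\infty}\|p_s\|_{L_\infty}$ pulled out of the double integral. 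You instead keep $p<m$ and apply H\"older twice, once in $y$ with the weight $|y|^{p(d/m+\alpha)}$ (the same trick the paper uses in \cref{lem:hessian-seminorm}) and once in $x$ to extract $\|p_s\|_{L_{m/(m-p)}}$. Your exponent bookkeeping is right: the $(t-s)$-exponent of the Gaussian moment integral collapses to $mp\alpha/(2(m-p))$ and raising to $(m-p)/m$ leaves $(t-s)^{p\alpha/2}$, and $\|p_s\|_{L_{m/(m-p)}}\lesssim s^{-dp/(2m)}$ by \eqref{eq:HK-Lp-1}. Note, though, that your general-$p$ argument formally excludes $p=m$ (division by $m-p$), so you handle that endpoint separately---and what you do there is precisely the paper's argument. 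In other words, you essentially reproved the endpoint case and then did extra work to cover $p<m$ by interpolation-style H\"older, when the endpoint case already implies the rest via the trivial embedding $L_m(\Omega)\hookrightarrow L_p(\Omega)$. Worth internalizing: on a probability space, monotonicity of $L_p$ norms in $p$ is a reduction that should be reached for first.
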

\begin{proof}
Since $\|\cdot\|_{L_p(\Omega)}\le\|\cdot\|_{L_m(\Omega)}$, it suffices to show the result when $p=m$.
We start with \eqref{eq:Lp-bound-1}. It is evident that
\begin{equs}
\E|f(B_t)|^m=\int_{\R^d}|f(x)|^m p_t(x)\,dx=\big\||f|^m p_t\big\|_{L_1(\R^d)}
\leq\|f\|_{L_m(\R^d)}^m\|p_t\|_{L_{\infty}(\R^d)}.
\end{equs}
By \eqref{eq:HK-Lp-1}, $\|p_t\|_{L_{\infty}(\R^d)}\lesssim  t^{-d/2}$, yielding the claim. Moving on to \eqref{eq:Lp-bound-2}, first one has
\begin{equs}
\| g(B_t)-g(B_s) \|_{L_p(\Omega))}^p =&\int_{\R^d} \int_{\R^d} |g(x)-g(y)|^p p_{t-s}(x-y) p_s(y) \, dx dy\,. \label{trivi}
%\\
%&\leq [ f ]_{W^\alpha_m(\R^d)}^p \left( \int_{\R^d} \int_{\R^d} |x-y|^\gamma | p_{t-s}(x-y) p_s(y)|^{m/(m-p)} \, dx dy \right) ^{(m-p)/m} ,
\end{equs}
If $p=m$, the right-hand side can be bounded by
\begin{equation*}
[ g]_{\dot{W}^\alpha_m(\R^d)}^m\| (\cdot)^{\alpha m +d} p_{t-s}(\cdot)\|_{L_{\infty}(\Rd)} \|p_s\|_{L_{\infty}(\Rd)}
\lesssim [ g]_{\dot{W}^\alpha_m(\R^d)}^m|t-s|^{\alpha m/2}s^{-d/2},
\end{equation*}
using \eqref{eq:HK-Lp-1} and \eqref{eq:HK-Lp-2}. Thus, \eqref{eq:Lp-bound-2} follows.
\end{proof}

\begin{proposition} \label{pro:time-reg}
Let  $p \in [1, \infty)$, $\delta\in(0,1)$ and $\alpha\in(0, 2 \delta]$. Then,  one has for all $f\in \dot{W}^{\alpha}_p(\R^d)$
\begin{equ}\label{eq:HK-bound-1}
\|\cP_tf-\cP_sf\|_{L_p(\R^d)}\leq N|t-s|^{\delta}s^{ \alpha/2-\delta} [f]_{\dot{W}^\alpha_p(\R^d)},
\end{equ}
for all $0 < s < t$, 
where $N$ is a constant depending only on $d,p,\alpha,\delta$. Moreover, for $\alpha = 2\delta$ the estimate also holds with $s=0$ with the convention $\cP_0 f=f$.
\end{proposition}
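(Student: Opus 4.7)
The plan is to combine the fundamental theorem of calculus for the heat semigroup with the Hessian estimate in \cref{lem:hessian-seminorm}. Since $\partial_r \cP_r f = \tfrac{1}{2}\Delta \cP_r f$ for $r>0$ (which is justified by the smoothness of $\cP_r f$ for strictly positive time), I would write
\begin{equation*}
\cP_t f - \cP_s f = \frac{1}{2}\int_s^t \Delta \cP_r f\,dr,
\end{equation*}
take $L_p(\R^d)$-norm, and apply Minkowski's integral inequality to obtain
\begin{equation*}
\|\cP_tf - \cP_sf\|_{L_p(\R^d)} \leq \frac{1}{2}\int_s^t \|\Delta \cP_r f\|_{L_p(\R^d)}\,dr \leq N \int_s^t \|\nabla^2 \cP_r f\|_{L_p(\R^d)}\,dr.
\end{equation*}
By \cref{lem:hessian-seminorm} (whose proof in fact produces the bound $r^{\alpha/2 -1}$ for every $r>0$, not only $r\in(0,1]$, since it ultimately relies on \eqref{eq:HK-Lp-3}), the integrand is controlled by $N r^{-1+\alpha/2}[f]_{\dot{W}^\alpha_p(\R^d)}$.

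It then remains to prove the scalar inequality
\begin{equation*}
\int_s^t r^{-1+\alpha/2}\,dr \leq N(\delta)\, (t-s)^\delta s^{\alpha/2 - \delta}
\end{equation*}
whenever $0<s<t$ and $\alpha\in(0,2\delta]$. The cleanest way is to split $r^{-1+\alpha/2}=r^{-1+\delta}\cdot r^{\alpha/2-\delta}$. Since $\alpha/2-\delta\leq 0$ and $r\geq s$, we have $r^{\alpha/2-\delta}\leq s^{\alpha/2-\delta}$, so
\begin{equation*}
\int_s^t r^{-1+\alpha/2}\,dr \leq s^{\alpha/2-\delta}\int_s^t r^{-1+\delta}\,dr = \frac{s^{\alpha/2-\delta}}{\delta}\bigl(t^\delta - s^\delta\bigr) \leq \frac{s^{\alpha/2-\delta}}{\delta}(t-s)^\delta,
\end{equation*}
using the elementary subadditivity $t^\delta - s^\delta \leq (t-s)^\delta$ valid for $\delta\in(0,1)$.

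For the borderline case $\alpha=2\delta$ with $s=0$, the weight $s^{\alpha/2-\delta}=1$ disappears and the integral reduces to $\int_0^t r^{-1+\delta}\,dr = t^\delta/\delta$, which is precisely the asserted bound. No real obstacle is anticipated: the only subtlety is the validity of \cref{lem:hessian-seminorm} for $r>1$, which (as noted above) is automatic from the scaling of the Gaussian derivative estimate \eqref{eq:HK-Lp-3}, and a brief justification of the differentiability of $r\mapsto \cP_r f$ in $L_p$ for $r>0$, which can be handled either by direct verification for smooth $f$ followed by approximation, or by interpreting the identity in the sense of tempered distributions.
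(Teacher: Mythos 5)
Your proposal is correct and follows essentially the same route as the paper: both integrate the heat equation $\partial_r\cP_r f = \tfrac12\Delta\cP_r f$ from $s$ to $t$, apply Minkowski's inequality, invoke \cref{lem:hessian-seminorm} for the Hessian, and then estimate $\int_s^t r^{-1+\alpha/2}\,dr$ by pulling out the nonpositive power of $r$ at $r=s$. The only cosmetic differences are that you handle the $\alpha=2\delta$, $s=0$ endpoint by direct evaluation of the integral rather than by the paper's appeal to Fatou's lemma letting $s\to 0$, and that you explicitly flag (correctly) that \cref{lem:hessian-seminorm}, although stated for $t\in(0,1]$, extends to all $t>0$ since it rests on the scaling estimate \eqref{eq:HK-Lp-3}; this is a genuine (if small) point the paper's proof glosses over.
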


\begin{proof}
One has
\begin{align*}
\|\cP_tf-\cP_sf\|_{L_p(\R^d)}\leq \int_s^t
\big\|\frac{\partial}{\partial r}\cP_rf\big\|_{L_p(\R^d)}\,dr
&=\int_s^t
\big\|\Delta \cP_r  f\big\|_{L_p(\R^d)}\,dr
\\
&\lesssim [f]_{\dot{W}^\alpha_p(\R^d)}\int_s^t r^{\frac\alpha 2-\delta}r^{-1+\delta}\,dr\\
&\lesssim [f]_{\dot{W}^\alpha_p(\R^d)}s^{\frac\alpha2-\delta}(t-s)^{\delta},
\end{align*}
where we have used Lemma \ref{lem:hessian-seminorm}.  If $\alpha= 2 \delta$, then letting $s\to 0$ and applying Fatou's lemma yields the inequality for $s=0$.
\end{proof}

\begin{proposition}\label{prop:HK-Sigmas}
Let $d\in\N, K>0$ and let $\Sigma,\bar\Sigma$ be symmetric invertible  matrices such that $K^{-1}I\prec\Sigma\bar\Sigma^{-1}\prec KI$. Then for all $x,y\in\R^d$ and $\alpha\in[0,1]$ one has the bounds
\begin{equs}
|p_\Sigma(x)-p_\Sigma(y)|&\leq N |x-y|^\alpha \|\Sigma^{-1}\|^{\alpha/2}\big(p_{\Sigma/2}(x)+p_{\Sigma/2}(y)\big),\label{eq:HK-bound-35}
\\
|p_\Sigma(x)-p_{\bar\Sigma}(x)|&\leq N 
%\big(1-(\det\Sigma\bar\Sigma^{-1})^{1/2}+\|I-\Sigma\bar\Sigma^{-1}\|\big)
\|I-\Sigma\bar\Sigma^{-1}\|
\big(p_{\Sigma/2}(x)+p_{\bar\Sigma/2}(x)\big),
\label{eq:tired}
\end{equs}
where $N$ is a constant depending only on $d,K$.
\end{proposition}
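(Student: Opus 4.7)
The plan is to prove each of the two bounds by representing the difference as (an integral of) a derivative, and then absorbing the resulting polynomial-in-$x_\Sigma$ factors into the wider Gaussian $p_{\Sigma/2}$ via \eqref{eq:easy-gaussian}.

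For the H\"older estimate \eqref{eq:HK-bound-35}, I would start from the identity $p_\Sigma(x)-p_\Sigma(y)=(2\pi)^{-d/2}(\det\Sigma^{-1})^{1/2}(e^{-|x_\Sigma|^2/2}-e^{-|y_\Sigma|^2/2})$, apply the scalar estimate $|e^{-a}-e^{-b}|\leq|a-b|e^{-\min(a,b)}$ to the exponential difference, and factor
\begin{equation*}
\bigl||x_\Sigma|^2-|y_\Sigma|^2\bigr|\leq(|x_\Sigma|+|y_\Sigma|)\,|x_\Sigma-y_\Sigma|\leq(|x_\Sigma|+|y_\Sigma|)\,\|\Sigma^{-1}\|^{1/2}|x-y|,
\end{equation*}
to obtain the preliminary estimate $|p_\Sigma(x)-p_\Sigma(y)|\leq\tfrac{1}{2}\|\Sigma^{-1}\|^{1/2}|x-y|(|x_\Sigma|+|y_\Sigma|)(p_\Sigma(x)+p_\Sigma(y))$. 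Then I would split into cases. If $\|\Sigma^{-1}\|^{1/2}|x-y|\geq1$, the trivial bound $|p_\Sigma(x)-p_\Sigma(y)|\leq p_\Sigma(x)+p_\Sigma(y)$ together with \eqref{eq:easy-gaussian} at $k=0$ immediately gives the claim. If $\|\Sigma^{-1}\|^{1/2}|x-y|<1$, then $|y_\Sigma|\leq|x_\Sigma|+1$, so the cross products $|y_\Sigma|p_\Sigma(x)$ can be absorbed into $Np_{\Sigma/2}(x)$ via \eqref{eq:easy-gaussian}, and the elementary inequality $r\leq r^\alpha$ for $r\in[0,1]$ converts the linear factor $\|\Sigma^{-1}\|^{1/2}|x-y|$ into the target H\"older exponent.

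For \eqref{eq:tired}, I would interpolate along $\Sigma_t:=(1-t)\bar\Sigma+t\Sigma$ and write $p_\Sigma(x)-p_{\bar\Sigma}(x)=\int_0^1\partial_tp_{\Sigma_t}(x)\,dt$. Jacobi's formula $\partial_t\log\det\Sigma_t=\tr(\Sigma_t^{-1}(\Sigma-\bar\Sigma))$ combined with $\partial_t\Sigma_t^{-1}=-\Sigma_t^{-1}(\Sigma-\bar\Sigma)\Sigma_t^{-1}$ gives
\begin{equation*}
\partial_tp_{\Sigma_t}(x)=\tfrac{1}{2}p_{\Sigma_t}(x)\bigl[x_{\Sigma_t}^{*}\,C_t\,x_{\Sigma_t}-\tr(C_t)\bigr],\qquad C_t:=\Sigma_t^{-1/2}(\Sigma-\bar\Sigma)\Sigma_t^{-1/2}.
\end{equation*}
The decomposition $C_t=(\Sigma_t^{-1/2}\bar\Sigma^{1/2})\cdot\bar\Sigma^{-1/2}(\Sigma-\bar\Sigma)\bar\Sigma^{-1/2}\cdot(\bar\Sigma^{1/2}\Sigma_t^{-1/2})$ exhibits $C_t$ as a product of three matrices: the middle one is symmetric and similar (via conjugation by $\bar\Sigma^{1/2}$) to $\Sigma\bar\Sigma^{-1}-I$, so its operator norm equals its spectral radius and is therefore at most $\|I-\Sigma\bar\Sigma^{-1}\|$; the assumption $K^{-1}I\prec\Sigma\bar\Sigma^{-1}\prec KI$ implies $K^{-1}\bar\Sigma\prec\Sigma_t\prec K\bar\Sigma$ and hence controls the outer two factors in operator norm by $K^{1/2}$. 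Thus $\|C_t\|\leq N\|I-\Sigma\bar\Sigma^{-1}\|$ with $N=N(d,K)$, and applying \eqref{eq:easy-gaussian} to the bracket yields $|\partial_tp_{\Sigma_t}(x)|\leq N\|I-\Sigma\bar\Sigma^{-1}\|\,p_{\Sigma_t/2}(x)$.

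The main obstacle is the final step of relating the $t$-dependent factor $p_{\Sigma_t/2}(x)$ back to the target $p_{\Sigma/2}(x)+p_{\bar\Sigma/2}(x)$ uniformly in $t\in[0,1]$. From the comparabilities $K^{-1}\Sigma\prec\Sigma_t$ and $K^{-1}\bar\Sigma\prec\Sigma_t$ one obtains $|x_{\Sigma_t}|^2\geq K^{-1}\max(|x_\Sigma|^2,|x_{\bar\Sigma}|^2)$; combined with $\exp(-\max(a,b))\leq\exp(-a)+\exp(-b)$ and the fact that the wider-Gaussian convention $p_{\cdot/2}$ carries enough exponential slack to absorb the factor $K$ into the constant, this produces the pointwise bound $p_{\Sigma_t/2}(x)\leq N(p_{\Sigma/2}(x)+p_{\bar\Sigma/2}(x))$. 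Integrating over $t\in[0,1]$ then finishes the proof.
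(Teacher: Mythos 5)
Your proof of \eqref{eq:HK-bound-35} is correct and proceeds differently from the paper: the paper computes $|\nabla p_\Sigma|\lesssim\|\Sigma^{-1}\|^{1/2}p_{\Sigma/2}$ and integrates this along a carefully chosen path that stays outside the sphere $\{|z_\Sigma|=|x_\Sigma|\}$ (then interpolates $\alpha\in(0,1)$), whereas you apply the scalar estimate $|e^{-a}-e^{-b}|\le|a-b|e^{-\min(a,b)}$ directly and handle the cross terms $|y_\Sigma|p_\Sigma(x)$ via the dichotomy on $\|\Sigma^{-1}\|^{1/2}|x-y|$ and $|y_\Sigma|\le|x_\Sigma|+1$. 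Both work; yours avoids the geometric path construction, at the cost of a slightly longer algebraic case split.

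For \eqref{eq:tired}, however, the interpolation argument has a genuine gap at the final step. (As a minor point: the comparabilities you write, $K^{-1}\Sigma\prec\Sigma_t$ and $K^{-1}\bar\Sigma\prec\Sigma_t$, give $|x_{\Sigma_t}|^2\le K|x_\Sigma|^2$, the \emph{opposite} direction; the ones you actually need are $\Sigma_t\prec K\Sigma$ and $\Sigma_t\prec K\bar\Sigma$, which also hold and do give $|x_{\Sigma_t}|^2\ge K^{-1}\max(|x_\Sigma|^2,|x_{\bar\Sigma}|^2)$.) The serious problem is that this lower bound is not strong enough: the claimed pointwise comparison $p_{\Sigma_t/2}(x)\le N(p_{\Sigma/2}(x)+p_{\bar\Sigma/2}(x))$ uniformly in $t\in[0,1]$ is \emph{false}. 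You would need $|x_{\Sigma_t}|^2\ge\min(|x_\Sigma|^2,|x_{\bar\Sigma}|^2)-O(1)$, but you only have a lower bound by $K^{-1}$ times that quantity, and the lost factor $1-K^{-1}$ multiplies something unbounded; the ``exponential slack'' gained in passing from $p_\Sigma$ to $p_{\Sigma/2}$ is a fixed factor and cannot absorb an error of size $\exp\bigl(c(1-K^{-1})\min(|x_\Sigma|^2,|x_{\bar\Sigma}|^2)\bigr)$.

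To see this concretely, take $d=2$, $\Sigma=\mathrm{diag}(1,K_0)$, $\bar\Sigma=\mathrm{diag}(K_0,1)$ with $K_0>1$ (the hypothesis holds for any $K>K_0$), and $x=\lambda(1,1)$. Then $\Sigma_{1/2}=\tfrac{1+K_0}{2}I$ and
\begin{equation*}
|x_{\Sigma_{1/2}}|^2=\frac{4\lambda^2}{1+K_0},\qquad
|x_\Sigma|^2=|x_{\bar\Sigma}|^2=\lambda^2\Bigl(1+\tfrac1{K_0}\Bigr),\qquad
|x_\Sigma|^2-|x_{\Sigma_{1/2}}|^2=\frac{\lambda^2(K_0-1)^2}{K_0(K_0+1)}\xrightarrow[\lambda\to\infty]{}\infty,
\end{equation*}
so $p_{\Sigma_{1/2}/2}(x)/\bigl(p_{\Sigma/2}(x)+p_{\bar\Sigma/2}(x)\bigr)\to\infty$ as $\lambda\to\infty$, regardless of how $N$ depends on $K$. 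Geometrically, the arithmetic mean $\Sigma_{1/2}$ averages out the anisotropy of $\Sigma$ and $\bar\Sigma$ and becomes \emph{more concentrated than both} along the diagonal. The paper avoids introducing any intermediate covariance: it splits $p_\Sigma-p_{\bar\Sigma}$ into a determinant piece (controlled by \eqref{eq:simple-matrix-bound}) and an exponential piece, and uses only the two-sided comparison $|x_\Sigma|\asymp_K|x_{\bar\Sigma}|$ together with \eqref{eq:easy-gaussian} applied at $\Sigma$ and $\bar\Sigma$ alone. If you want to salvage an interpolation argument you would need a path along which $|x_{\Sigma_t}|^2$ dominates $\min(|x_\Sigma|^2,|x_{\bar\Sigma}|^2)$ up to an additive constant; the straight line in $\Sigma$ (and, for the same reason, the straight line in $\Sigma^{-1}$) does not achieve this.
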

\begin{proof}
We start with \eqref{eq:HK-bound-35}. The case $\alpha=0$ is trivial, and therefore it suffices to show the bound in the case $\alpha=1$, the remaining cases follow by interpolation. For all $k=1,\ldots,d$, one has
\begin{equ}
\d_{x_k}p_\Sigma(x)=\d_{x_k}\big(-\tfrac{1}{2}x^*\Sigma^{-1}x\big)p_\Sigma(x).
\end{equ}
It is easy to see that $\big|\d_{x_k}|x_\Sigma|^{2}\big|\leq 2|x_\Sigma|\|\Sigma^{-1/2}\|$, and therefore by \eqref{eq:easy-gaussian} one gets
\begin{equ}
|\nabla p_{\Sigma}(x)|\lesssim \|\Sigma^{-1}\|^{1/2}|p_{\Sigma/2}(x)|.
\end{equ}
Now take $x,y\in\R^d$, and assume without loss of generality that $|x_\Sigma| \leq |y_\Sigma|$. Define $\bar x$ to be the minimizer of the distance from $y$ to the set $\{z:|z_\Sigma|=|x_\Sigma|\}$.
By definition, $|\bar x-y|\leq |x-y|$ and every point $z$ on the line segment between $\bar x$ and $y$ satisfies $|z_\Sigma|\geq|x_\Sigma|$.
Moreover, $|\bar x_\Sigma|=|x_\Sigma|$ and therefore $p_\Sigma(x)=p_\Sigma(\bar x)$. Denoting by $e=\frac{y-\bar x}{|y-\bar x|}$ the unit vector in the direction of $y-\bar x$, one has
\begin{equs}
|p_\Sigma(x)-p_\Sigma(y)|=|p_\Sigma(\bar x)-p_\Sigma(y)|
&\leq |y-\bar x|\int_0^1\big|\d_e p_{\Sigma}\big(\bar x+\theta(y-\bar x)\big)\big|\,d\theta\\
&\les |y-\bar x|\int_0^1\|\Sigma^{-1}\|^{1/2}p_{\Sigma/2}\big(\bar x+\theta(y-\bar x)\big)\,d\theta\\
&\lesssim  |y-x|\|\Sigma^{-1}\|^{1/2} p_{\Sigma/2}(x),
\end{equs}
as claimed. Moving on to \eqref{eq:tired},
\begin{equs}
p_\Sigma(x)-p_{\bar\Sigma}(x)&=\big(1-(\det \Sigma\bar\Sigma^{-1})^{1/2}\big)p_\Sigma(x)
\\
&\qquad+\frac{(\det\bar\Sigma^{-1})^{1/2}}{(2\pi)^{d/2}}\Big(\exp(-\tfrac{1}{2}|x_\Sigma|^2)-\exp(-\tfrac{1}{2}|x_{\bar\Sigma}|^2)\Big).
\end{equs}
Thanks to \eqref{eq:simple-matrix-bound} applied with $A=\Sigma\bar\Sigma^{-1}-I$, the first term can be immediately seen to be bounded by the right-hand side of \eqref{eq:tired}. Concerning the second, one can write
\begin{equs}
(\det  \bar\Sigma^{-1})^{1/2}&\big|\exp(-\tfrac{1}{2}|x_\Sigma|^2)-\exp(-\tfrac{1}{2}|x_{\bar\Sigma}|^2)\big|
\\
&\lesssim (\det\bar\Sigma^{-1})^{1/2}
(|x_\Sigma|^2-|x_{\bar\Sigma}|^2)\big|\exp(-\tfrac{1}{2}|x|_{\Sigma}^2)+\exp(-\tfrac{1}{2}|x_{\bar\Sigma}|^2)\big|
\\&\lesssim 
(|x_\Sigma|^2-|x_{\bar\Sigma}|^2)\big(p_{\Sigma}(x)+p_{\bar\Sigma}(x)\big).
% \\
% &\lesssim  |x_\Sigma|^2\|I-\Sigma^{1/2}\bar\Sigma^{-1}\Sigma^{1/2}\|\big(p_{\Sigma}(x)+p_{\bar\Sigma}(x)\big).
\end{equs}
Note that
\begin{align*}
	|x_\Sigma|^2-|x_{\bar \Sigma}|^2=x_\Sigma^*(I- (\Sigma^{1/2})^*\bar \Sigma^{-1}\Sigma^{1/2})x_\Sigma.
\end{align*}
The matrix $A:=I- (\Sigma^{1/2})^*\bar \Sigma^{-1}\Sigma^{1/2}$ is real symmetric and hence has the form $A=Q^*DQ$ where $Q$ is an orthogonal matrix and $D$ is a diagonal matrix. Hence, we have
\begin{align*}
x_\Sigma^*Ax_\Sigma= (Qx_\Sigma)^*D(Q x_\Sigma)\le|Q x_\Sigma|^2\tr(D)=|x_\Sigma|^2\tr(A).
\end{align*}
Furthermore, writing $A=I- \Sigma^{-1/2}\Sigma\bar \Sigma^{-1}\Sigma^{1/2}$, we obtain that
\begin{align*}
	\tr(A)
	% =\tr(I- (\Sigma^{1/2})^*\bar \Sigma^{-1}\Sigma^{1/2})
	=\tr(I- \Sigma^{-1/2}\Sigma\bar \Sigma^{-1}\Sigma^{1/2})
	=\tr(I- \Sigma\bar \Sigma^{-1})
	\les\|I- \Sigma\bar \Sigma^{-1}\|.
\end{align*}
Hence, we have
\begin{align*}
	(\det  \bar\Sigma^{-1})^{1/2}&\big|\exp(-\tfrac{1}{2}|x_\Sigma|^2)-\exp(-\tfrac{1}{2}|x_{\bar\Sigma}|^2)\big|\les \|I- \Sigma\bar \Sigma^{-1}\||x_\Sigma|^2\big(p_{\Sigma}(x)+p_{\bar\Sigma}(x)\big).
\end{align*}
By our assumptions on $\Sigma \bar \Sigma^{-1}$, we also have $|x_\Sigma|\les|x_{\bar \Sigma}|$. Using \eqref{eq:easy-gaussian} once again, we see that
\begin{align*}
	|x_\Sigma|^2\big(p_{\Sigma}(x)+p_{\bar\Sigma}(x)\big)\les|x_\Sigma|^2p_{\Sigma}(x)+|x_{\bar \Sigma}|^2 p_{\bar\Sigma}(x)
	\les p_{\Sigma/2}(x)+p_{\bar\Sigma/2}(x).
\end{align*}
This completes the proof.
\end{proof}

\begin{corollary}
Let $\Sigma_0$ be a symmetric and positive definite $d\times d$-matrix and $x\in\R^d$. Then the function $\Sigma\mapsto p_{\Sigma}(x)$ is differentiable at $\Sigma_0$ and there exists a constant $N$ depending only on $d$ such that
\begin{equ}\label{eq:HK-diff-Sigma}
\|\d_{\Sigma} p_{\Sigma_0}(x)\|\leq N\|\Sigma_0^{-1}\|p_{\Sigma_0/2}(x).
\end{equ}
\end{corollary}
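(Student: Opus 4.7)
The plan is to derive both claims directly from \cref{prop:HK-Sigmas}, specifically the bound \eqref{eq:tired}. The differentiability is in fact an automatic consequence of the explicit formula \eqref{eq:p-def}: on the open set of invertible symmetric matrices, $\Sigma \mapsto (\det \Sigma^{-1})^{1/2}$ and $\Sigma\mapsto \exp(-\tfrac12 x^*\Sigma^{-1}x)$ are both smooth, since $\Sigma\mapsto\Sigma^{-1}$ and $\Sigma\mapsto \det\Sigma$ are smooth (and nonvanishing) there. So I would simply remark this and focus on the quantitative bound \eqref{eq:HK-diff-Sigma}.

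For the norm estimate, my plan is to realize $\partial_\Sigma p_{\Sigma_0}(x)$ as a directional derivative and test it against an arbitrary symmetric perturbation $H$. Choose $t$ small enough that $\Sigma_0 + tH$ is still positive definite and that $(\Sigma_0+tH)\Sigma_0^{-1}$ satisfies the two-sided bound with, say, $K=2$. Applying \eqref{eq:tired} with $\Sigma=\Sigma_0$ and $\bar\Sigma=\Sigma_0+tH$ gives
\begin{equation*}
|p_{\Sigma_0+tH}(x)-p_{\Sigma_0}(x)|\leq N\,\|I-\Sigma_0(\Sigma_0+tH)^{-1}\|\bigl(p_{\Sigma_0/2}(x)+p_{(\Sigma_0+tH)/2}(x)\bigr).
\end{equation*}
Using the algebraic identity $I-\Sigma_0(\Sigma_0+tH)^{-1}=tH(\Sigma_0+tH)^{-1}$, we obtain the operator-norm bound $\|I-\Sigma_0(\Sigma_0+tH)^{-1}\|\leq |t|\,\|H\|\,\|(\Sigma_0+tH)^{-1}\|$. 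Dividing by $|t|$ and letting $t\to 0$, the right-hand factor $\|(\Sigma_0+tH)^{-1}\|$ converges to $\|\Sigma_0^{-1}\|$ and $p_{(\Sigma_0+tH)/2}(x)\to p_{\Sigma_0/2}(x)$ by continuity, yielding
\begin{equation*}
|\partial_\Sigma p_{\Sigma_0}(x)[H]|\leq 2N\,\|H\|\,\|\Sigma_0^{-1}\|\,p_{\Sigma_0/2}(x),
\end{equation*}
from which the claim follows by taking the supremum over $\|H\|=1$. The only mildly subtle point is making sure that the constant $N$ from \eqref{eq:tired} can be taken uniform as $t\to 0$, which is immediate because $(\Sigma_0+tH)\Sigma_0^{-1}\to I$, so any fixed $K>1$ works for all sufficiently small $t$. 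There is no real obstacle here — the statement is a direct quantitative reading of the previous proposition.
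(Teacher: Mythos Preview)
Your proof is correct and follows essentially the same approach as the paper: both apply \eqref{eq:tired} to a one-parameter perturbation of $\Sigma_0$, divide by the parameter, and pass to the limit. The only cosmetic difference is that the paper sets $\Sigma=\Sigma_0+h\Sigma'$ and $\bar\Sigma=\Sigma_0$, which gives $\|I-\Sigma\bar\Sigma^{-1}\|=h\|\Sigma'\Sigma_0^{-1}\|$ directly without needing your algebraic identity for $I-\Sigma_0(\Sigma_0+tH)^{-1}$.
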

\begin{proof}
The differentiability is obvious, therefore we only need to show the bound \eqref{eq:HK-diff-Sigma}. 
Take $\Sigma'$ to be an arbitrary but fixed matrix with unit norm and $h>0$.
Set $\Sigma=\Sigma_0+h\Sigma'$ and $\bar \Sigma=\Sigma_0$. Notice that for small enough $h$, these two matrices satisfy the condition of Proposition \ref{prop:HK-Sigmas} with, say, $K=2$.
 Applying \eqref{eq:tired}, we get
\begin{equ}
|p_{\Sigma_0}(x)-p_{\Sigma_0+h\Sigma'}(x)|\lesssim \|h\Sigma'\Sigma_0^{-1}\|
|p_{\Sigma_0/2}(x)+p_{(\Sigma_0+h\Sigma')/2}(x)|.
\end{equ}
Upon dividing by $h$ and letting $h\to 0$, we get that $\langle  \D_{\Sigma} p_{\Sigma_0}(x), \Sigma' \rangle$, 
the directional derivative of $p_{\cdot}(x)$ at $\Sigma_0$ in the direction of $\Sigma'$,  satisfies
\begin{equs}
|\langle  \D_{\Sigma} p_{\Sigma_0}(x), \Sigma' \rangle | \lesssim \|\Sigma_0^{-1}\|p_{\Sigma_0/2}(x).
\end{equs}
Taking suprema over $\Sigma'$ gives \eqref{eq:HK-diff-Sigma}.
\end{proof}

\subsection{Sewing}
We need a weighted version of the stochastic sewing lemma.
Such a version appeared recently in \cite{ABML}, but our formulation below allows for a larger range of exponents $\delta_1,\delta_2$.
 Set $[S,T]_\leq=\{(s,t):\,S< s<t\leq T\}$ and 
$$
[S,T]_\leq^*=\{(s,t):\,S<s<t\leq T,\,|t-s|\leq S\}.
$$
For functions $\cA$ of one variable we use the notation $\cA_{s,t}=\cA_{t}-\cA_s$ and for functions $A$ of two variables we use the notation $\delta A_{s,u,t}=A_{s,t}-A_{s,u}-A_{u,t}$.
On $(\Omega,\cF,(\cF_t)_{t\geq 0},\bP)$, the conditional expectation with respect to $\cff_s$ is denoted by $\E^s$.

\begin{lemma}\label{lem:wSSL}
Fix $p\geq2$ and let $A$ be a function from $[0,1]_\leq$ to $L_p(\Omega)$ such that $A_{s,t}$ is $\cF_t$-measurable for all $(s,t)\in[0,1]_{\leq}$.
Suppose that there exist $\eps_1,\eps_2>0,\delta_1,\delta_2\geq 0$ and $C_1,C_2<\infty$ satisfying $1/2+\eps_1-\delta_1>0$, $1+\eps_2-\delta_2>0$ and the following:
for all $S \in(0,1]$, $(s,t)\in[S,1]_\leq^*$ and  $u\in[s,t]$, the bounds
	\begin{equs}
	\|A_{s,t}\|_{L_p(\Omega)} & \leq C_1 S^{-\delta_1}|t-s|^{1/2+\eps_1}, \label{eq:SSL 1}
	\\
	\|\E^s\delta A_{s,u,t}\|_{L_p(\Omega)} & \leq C_2 S^{-\delta_2}|t-s|^{1+\eps_2} \label{eq:SSL2}
	\end{equs}
	hold.
	Then there exists a unique $(\cF_t)_{t\in[0,1]}$-adapted function $\cA:[0,1]\to L_p(\Omega)$ such that $\cA_0=0$ and for all $S\in(0,1]$, there exist constants $K_1,K_2$ such that for all $(s,t) \in[S, 1] _{\leq}^*$ one has
\begin{equs}
\|\cA_{s,t}-A_{s,t}\|_{L_p(\Omega)}&\leq K_1|t-s|^{1/2+\eps_1}+K_2 |t-s|^{1+\eps_2},\label{eq:SSL3}
\\
\|\E^s(\cA_{s,t}-A_{s,t})\|_{L_p(\Omega)}&\leq  K_2|t-s|^{1+\eps_2}.\label{eq:SSL4}
\end{equs}
Furthermore, the above bounds hold with $K_1=N C_1 S^{-\delta_1}$, $K_2=N C_2 S^{-\delta_2}$, where $N$ depends only on $p, \eps_1, \eps_2, \delta_1, \delta_2$. 
	Moreover, $\cA$ satisfies the bounds, for all $S\in(0,1]$, $(s,t) \in[S, 1] _{\leq}^*$,
	\begin{equ}\label{eq:SSL-conclusion}
	\|\cA_{s,t}\|_{L_p(\Omega)}\leq N\big(C_1 S^{-\delta_1}|t-s|^{1/2+\eps_1}+C_2 S^{-\delta_2}|t-s|^{1+\eps_2}\big).
	\end{equ}
	\end{lemma}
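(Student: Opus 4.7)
The plan is to localise to dyadic intervals, where the weights $s^{-\delta_i}$ are essentially constant, and apply the classical (unweighted) stochastic sewing lemma of \cite{Khoa} on each piece. Set $I_k=[2^{-k},2^{-k+1}]$ for $k\ge 1$. For any $(s,u)\subset I_k$ one has $|u-s|\le 2^{-k}$ and $s\ge 2^{-k}$, so the hypotheses \eqref{eq:SSL 1}--\eqref{eq:SSL2} with $S=2^{-k}$ reduce to the unweighted bounds $\|A_{s,u}\|_{L_p}\le C_1 2^{k\delta_1}|u-s|^{1/2+\eps_1}$ and $\|\E^s\delta A_{s,r,u}\|_{L_p}\le C_2 2^{k\delta_2}|u-s|^{1+\eps_2}$. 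The classical SSL then produces $\cA^{(k)}\colon I_k\to L_p(\Omega)$ with $\cA^{(k)}_{2^{-k}}=0$ satisfying
\begin{equs}
\|\cA^{(k)}_{s,u}-A_{s,u}\|_{L_p(\Omega)}\le N\bigl(C_1 2^{k\delta_1}|u-s|^{1/2+\eps_1}+C_2 2^{k\delta_2}|u-s|^{1+\eps_2}\bigr)
\end{equs}
together with its conditional counterpart.

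Next I would glue the pieces by setting $\cA_0=0$ and, for $t\in I_k$, $\cA_t=\sum_{j>k}\cA^{(j)}_{2^{-j+1}}+\cA^{(k)}_t$. Combining the displayed estimate with \eqref{eq:SSL 1} gives
\begin{equs}
\|\cA^{(j)}_{2^{-j+1}}\|_{L_p(\Omega)}\lesssim C_1 2^{-j(1/2+\eps_1-\delta_1)}+C_2 2^{-j(1+\eps_2-\delta_2)},
\end{equs}
which is summable thanks to $1/2+\eps_1-\delta_1>0$ and $1+\eps_2-\delta_2>0$; hence $\cA_t$ is well-defined in $L_p(\Omega)$, is $(\cF_t)$-adapted, and $\cA_t\to 0$ in $L_p(\Omega)$ as $t\to 0^+$, so the initial condition is realised.

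To verify \eqref{eq:SSL3}--\eqref{eq:SSL4}, fix $(s,t)\in[S,1]_\le^*$. The constraint $|t-s|\le S\le s$ forces $t\le 2s$, so $s$ and $t$ lie in the same dyadic interval $I_k$ or in at most two adjacent ones. If $(s,t)\subset I_k$, then $\cA_{s,t}=\cA^{(k)}_{s,t}$ and the standard SSL bound applies; using $s\ge\max(S,2^{-k})$ one gets $2^{k\delta_i}\le(2/S)^{\delta_i}$, which yields the claimed values of $K_1,K_2$. In the cross-boundary case, insert the dyadic breakpoints $u_1,u_2$ between $s$ and $t$, decompose $\cA_{s,t}$ into at most three within-interval increments, and express $A_{s,t}$ through the three sub-increments of $A$ plus the cocycle defects $\delta A_{s,u_1,t}$ and $\delta A_{u_1,u_2,t}$. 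The within-interval errors are handled as above; the defects are controlled by the triangle inequality and \eqref{eq:SSL 1} for the unconditional bound, and by the tower property together with \eqref{eq:SSL2} for the conditional one. The bound \eqref{eq:SSL-conclusion} is then an immediate consequence of \eqref{eq:SSL3} and \eqref{eq:SSL 1}.

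For uniqueness, I would argue that any two admissible processes $\cA,\cA'$ must coincide on each $I_k$ up to an additive constant by the uniqueness clause of the standard SSL applied on $I_k$; induction from $k\to\infty$, using $\cA_0=\cA'_0=0$ and the summability established above, then forces $\cA=\cA'$. The delicate points I expect are (i) checking that the constants produced in the cross-boundary case really depend on $S$ only via $S^{-\delta_i}$ (rather than picking up an extra additive constant from summing geometric series across many dyadic pieces), and (ii) carefully justifying convergence at $t=0$ so that $\cA_0=0$ is genuinely attained; both are bookkeeping built on top of the classical SSL of \cite{Khoa}.
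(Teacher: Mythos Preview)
Your proposal is correct and follows essentially the same approach as the paper: both localise to the dyadic intervals $I_k=[2^{-k},2^{-k+1}]$, apply the unweighted stochastic sewing lemma of \cite{Khoa} there, and glue the resulting pieces using the summability granted by $1/2+\eps_1-\delta_1>0$ and $1+\eps_2-\delta_2>0$. The paper simply cites a reference for the extension from within-interval bounds to general $(s,t)\in[S,1]_\le^*$, whereas you sketch it explicitly; note a small slip in your cross-boundary discussion: since $t\le 2s$, the pair $(s,t)$ spans at most \emph{two} adjacent dyadic intervals, so there is a single breakpoint $u=2^{-k+1}$ and two (not three) within-interval increments, with one defect term $\delta A_{s,u,t}$.
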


\begin{proof}
Note that our assumptions imply that the conditions of the usual stochastic sewing lemma from \cite{Khoa} are satisfied on each interval $I_n:=[2^{-n},2^{-n+1}]$. Therefore we get adapted processes $\cA^{(n)}$ on $I_n$ such that $\cA^{(n)}_{2^{-n}}=0$ and that for all $s,t\in I_n$ one has the bounds \eqref{eq:SSL3}-\eqref{eq:SSL4}-\eqref{eq:SSL-conclusion} hold with $\cA^{(n)}$ in place of $\cA$ and $2^{-n}$ in place of $S$, with $K_1,K_2$ as specified in the statement. In particular, one gets
\begin{equ}
\|\cA^{(n)}_{2^{-n},2^{-n+1}}\|_{L_p(\Omega)}\lesssim (C_1+C_2)2^{-n\kappa}
\end{equ}
with $\kappa:=\min(1/2+\eps_1-\delta_1,1+\eps_2-\delta_2)$. By assumption $\kappa>0$, and therefore if
we define
\begin{equ}
\cA_t=\sum_{n=1}^\infty \bone_{t \geq 2^{-n}} \cA_{2^{-n},2^{-n+1}\wedge t}^{(n)},
\end{equ}
then the sum converges in $L_p(\Omega)$ so the definition is indeed meaningful.
Clearly $\cA$ is adapted, $\cA_0=0$, and since $\cA_{s,t}=\cA^{(n)}_{s,t}$ for $(s,t)\in I_n$, the bounds \eqref{eq:SSL3}-\eqref{eq:SSL-conclusion} hold for $(s,t)\in I_n$, $S=2^{-n}$.
Extending it to general pairs $(s,t)$ is then standard, see, e.g., \cite{Carlo}.
\end{proof}

% \subsection{Known estimates for $X$ and $X^n$}
\subsection{PDE estimates} % (fold)
\label{sub:pde_estimates}

% subsection pde_estimates (end)
% In this section we collect a few useful estimates concerning equations \eqref{eq:main}, \eqref{eq:main-EM}.
In this section, we obtain estimates concerning the PDE associated with \eqref{eq:main}.
Although such results seem folklore, due to the lack of exact reference to our knowledge, we provide a short proof.
In the following, for a function $u$ on $\R^d$, $\nabla^2 u$ is understood to be a matrix. For matrices $A,B$, just like for any other vectors, the $\cdot$ notation denotes the scalar product $A\cdot B=\sum_{i,j}A_{i,j}B_{i,j}$.
Consider the PDE
\begin{equs} \label{eq:PDE}
\begin{aligned}
\D_t u & = \frac{1}{2}(\sigma \sigma^*)\cdot  \nabla^2 u +b\cdot\nabla u -\theta u + f, \qquad && \text{in} \  (0,1) \times \R^d,
\\
u&=0 \qquad && \text{on} \  \{0\} \ \times \R^d,
\end{aligned}
\end{equs}
for $\theta >0$. We look for solutions in the mixed order Sobolev spaces $W^{1,2}_p((0,1) \times \R^d)$ defined as the completion of the set of compactly supported smooth functions with respect to the norm
\begin{equ}
\|u\|_{W^{1,2}_p((0,1) \times \R^d)}=\sum_{k\in\N_0,\ell\in\N_0^d,2k+|\ell|_1\leq 2}\|\d^k_t\d_x^\ell u\|_{L_p((0,1)\times\R^d)}.
\end{equ}

\begin{lemma}          \label{lem:PDE-estimates}
Let  $f \in L_p((0,1) \times \R^d)\cap L_\infty((0,1) \times\R^d)  $ for $p >1$, and   $b \in L_\infty(\R^d)$.  
Then, \eqref{eq:PDE}  has a unique solution $u$  in the class $W^{1,2}_p((0,1) \times \R^d)$.  Moreover, there exists a constant $N$ depending only on $p, \lambda$, $\|\sigma\|_{C^2}$, $T$, and $\|b\|_{\bB(\R^d)}$, such   that 
\begin{equs}              \label{eq:u_xx-estimate}
 \|u \|_{W^{1,2}_p((0,1) \times \R^d) } &  \leq  N \| f\|_{L_p((0,1) \times \R^d)}. 
 \end{equs}
In addition, there  exists $\theta_0>0$ such that for all $\theta> \theta_0$
 \begin{equs}                    \label{eq:u_x-estimate}
 \| \nabla u \|_{\bB( [0, 1] \times \R^d)} & \leq  N {\theta}^{-1/2} \| f\|_{\bB( [0, 1] \times \R^d)}.
\end{equs}
\end{lemma}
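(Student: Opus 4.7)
The plan is to split the statement into two essentially independent parts: the $W^{1,2}_p$ solvability with the estimate \eqref{eq:u_xx-estimate}, and the sharp gradient bound \eqref{eq:u_x-estimate}. The first is a consequence of classical parabolic $L_p$ theory; the second is a Duhamel/semigroup argument exploiting the exponential damping $e^{-\theta t}$.

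For \eqref{eq:u_xx-estimate}, I would rely on the standard parabolic $L_p$ estimate for the principal operator $L_0=\tfrac12(\sigma\sigma^*)\cdot\nabla^2$: since $\sigma\in C^2$ and $\sigma\sigma^*\succ\lambda^2 I$, the Cauchy problem $\partial_tv=L_0v+g$, $v|_{t=0}=0$, has a unique $W^{1,2}_p$ solution with $\|v\|_{W^{1,2}_p}\le N\|g\|_{L_p}$ (see, e.g., Krylov's $L_p$-theory for linear parabolic equations). Applied to $g=-b\cdot\nabla u+\theta u+f$ in \eqref{eq:PDE}, this yields
\begin{equs}
\|u\|_{W^{1,2}_p((0,1)\times\R^d)}\le N\bigl(\|f\|_{L_p}+\|b\|_{\bB}\|\nabla u\|_{L_p}+\theta\|u\|_{L_p}\bigr).
\end{equs}
The gradient term is absorbed via the interpolation inequality $\|\nabla u\|_{L_p}\le\varepsilon\|\nabla^2u\|_{L_p}+C_\varepsilon\|u\|_{L_p}$, and $\|u\|_{L_p}$ is controlled by $\|f\|_{L_p}$ on the finite interval $[0,1]$ using $u(0)=0$ and a Gronwall argument on $t\mapsto\|u(t,\cdot)\|_{L_p}^p$. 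Existence can then be obtained by the continuity method along the family $b_s:=sb$, $s\in[0,1]$, starting from the classical case $s=0$; uniqueness is immediate from the a priori estimate.

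For \eqref{eq:u_x-estimate}, let $(S_r)_{r\ge0}$ denote the semigroup generated by $L_0$. Because $\sigma\in C^2$ is uniformly elliptic, the fundamental solution of $\partial_t-L_0$ satisfies Aronson-type Gaussian bounds together with a gradient estimate of the form $|\nabla_x\Gamma_0(r,x,y)|\lesssim r^{-(d+1)/2}\exp(-c|x-y|^2/r)$, which integrates in $y$ to give
\begin{equs}
\|\nabla S_rg\|_{\bB(\R^d)}\le Cr^{-1/2}\|g\|_{\bB(\R^d)},\qquad r\in(0,1].
\end{equs}
Duhamel's formula for \eqref{eq:PDE} reads $u(t,\cdot)=\int_0^te^{-\theta(t-s)}S_{t-s}\bigl(f(s,\cdot)+b\cdot\nabla u(s,\cdot)\bigr)\,ds$, so taking gradients and applying the semigroup bound gives
\begin{equs}
\|\nabla u(t,\cdot)\|_{\bB}\le C\int_0^te^{-\theta(t-s)}(t-s)^{-1/2}\bigl(\|f\|_{\bB}+\|b\|_{\bB}\|\nabla u(s,\cdot)\|_{\bB}\bigr)\,ds.
\end{equs}
Using $\int_0^\infty e^{-\theta r}r^{-1/2}\,dr=\sqrt{\pi/\theta}$, taking $\sup_{t\in[0,1]}$, and then choosing $\theta_0$ so that $C\|b\|_{\bB}\theta_0^{-1/2}=1/2$, the second term is absorbed on the left for all $\theta>\theta_0$, yielding \eqref{eq:u_x-estimate}.

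The main technical obstacle is making the Duhamel/semigroup step rigorous within the $W^{1,2}_p$ framework: at the outset $\nabla u$ is merely an $L_p$ object, so one cannot take suprema directly. I would circumvent this by first proving \eqref{eq:u_x-estimate} for smooth compactly supported $f_n$ (where classical solutions exist and the pointwise manipulations above are legitimate), and then passing to the limit via the $W^{1,2}_p$ bound together with Sobolev embedding after enlarging $p$ if necessary. Keeping the dependence of $\theta_0$ solely on $\|b\|_{\bB}$ and the constants of the semigroup gradient bound (which depend only on $\|\sigma\|_{C^2}$ and $\lambda$) is crucial so that the resulting $N$ in \eqref{eq:u_x-estimate} has the declared dependence.
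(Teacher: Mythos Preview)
Your proposal is correct and follows essentially the same route as the paper: the $W^{1,2}_p$ solvability and estimate \eqref{eq:u_xx-estimate} are taken from Krylov's parabolic $L_p$ theory, and \eqref{eq:u_x-estimate} is obtained by the Duhamel/Green's function representation together with the fundamental-solution gradient bound $\|\nabla_x p^\sigma_t(x,\cdot)\|_{L_1}\lesssim t^{-1/2}$, yielding $\int_0^t e^{-\theta(t-s)}(t-s)^{-1/2}\,ds\lesssim\theta^{-1/2}$ and then absorbing the $b\cdot\nabla u$ contribution for $\theta$ large. The only cosmetic difference is that the paper secures the a~priori finiteness of $\|\nabla u\|_{\bB}$ (needed before the absorption step) directly via Sobolev embedding $W^{1,2}_q\hookrightarrow C^{1-(d+2)/(2q),\,2-(d+2)/q}$ for large $q$ (available since $f\in L_p\cap L_\infty\subset L_q$), rather than via your smooth-approximation detour---but you already list this embedding as your fallback, so the arguments coincide.
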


\begin{proof}
The existence, the uniqueness, and estimate \eqref{eq:u_xx-estimate} follow from \cite[Theorem 10, p.123]{Krylov_PDE}.  For \eqref{eq:u_x-estimate}, let us assume first that $b=0$. We have that $\tilde{u}(t,x):= e^{\theta t} u(t,x)$ satisfies
\begin{equs}             \label{eq:transformed-PDE}
\D_t \tilde u  = \frac{1}{2}(\sigma \sigma^* )\cdot \nabla^2 \tilde u+ e^{\theta t} f.
\end{equs}
Hence, we have 
\begin{equs}
\nabla \tilde{u}(t, x)= \int_0^t \int_{\R^d} \nabla_x p^{\sigma}_{t-s}(x,y) e^{\theta s} f(s, y) \,dy  ds,
\end{equs}
where  $p^{\sigma}(x,y)$ is the Green's function of the operator $\d_t-\frac{1}{2}(\sigma\sigma^*)\cdot\nabla^2$. % \eqref{eq:transformed-PDE}. 
It is well known (see, e.g., \cite[(6.13), p. 24] {friedman}), that there is a constant $N$ depending on $d$ and $\|\sigma\|_{C^2}$ such that 
$\sup_{x \in \R^d}\| \nabla_x p^\sigma_t(x,\cdot) \|_{L_1(\R^d)} \leq N t^{-1/2}$ for all $t\in(0,1]$, which implies 
\begin{equs}
|\nabla u(t, x)| \lesssim   \| f\|_{\bB([0,1] \times \R^d)} \int_0^t |t-s|^{-1/2}  e^{-\theta (t-s) }  ds \lesssim\theta^{-1/2}  \| f\|_{\bB([0,1] \times \R^d)},
\end{equs} 
where the last inequality can be easily seen from a change of variables. This shows \eqref{eq:u_x-estimate} in the case $b=0$. For the general case, notice that by assumption we have that $f \in L_q((0,1)\times \R^d)$ for all $q \in [p,  \infty)$,  which shows that in fact \eqref{eq:u_xx-estimate} holds for all such $q$ in place of $p$.  From this and the embedding $W^{1,2}_q((0,1) \times \R^d) \subset C^{1-(d+2)/(2q), 2-(d+2)/q}((0,1) \times \R^d)$ for $q$ large enough, it follows that  $b\cdot\nabla u+f \in  L_p((0,1) \times \R^d)\cap L_\infty((0,1) \times\R^d)  $. Hence, we can replace $f$ in our previous analysis   by $b\cdot\nabla u+f$, which gives 
\begin{equs}
 \| \nabla u \|_{\bB( [0, 1] \times \R^d)} & \lesssim  {\theta}^{-1/2} \| f\|_{\bB([0,1] \times \R^d)}+  {\theta}^{-1/2} \| \nabla u  \|_{\bB([0,1] \times \R^d)}< \infty.
\end{equs}
If $\theta$ is large enough, \eqref{eq:u_x-estimate} clearly follows.
\end{proof}

\subsection{A Gronwall-type lemma}
We use a somewhat nonstandard form of Gronwall's lemma, stated in the following lemma.
\begin{lemma} \label{lem:Gronwall-type}
Let $(Y)_{t \in [0,1]}$  be an adapted,  non-negative  process, 
such that $\E \sup_{ t \in [0, 1]} Y ^2_t<\infty$.
Let $(A_t)_{t \in [0,1]}$ be an adapted,  continuous increasing process, and let $C, R \geq 0$ be constants. Assume  that  for all stopping times $\tau$, $\tau'$, with $0 \leq \tau\leq \tau'  \leq 1$ we have
\begin{equs}    \label{eq:ass-gron}
\E \sup_{ t \in [ \tau, \tau']}  Y_t \leq C \E Y _{ \tau} + C R + C \E \int_{\tau}^ {\tau'} Y_s \, dA_s
\end{equs}
Then, there exist a constant $\bar C \geq 0$ depending only on $C$  such that for all $m \in \mathbb{N}$ we have 
\begin{equs}   \label{eq:gronwal-conclusion}
\E \sup_{ t \in [0, 1]} Y _t  \leq \bar C^m \E Y _0   + \bar C^m R +  \bar C (\E \sup_{ t \in [0, 1]} Y ^2_t)^{1/2} ( \mathbf{P} ( A_1 \geq \frac{m}{2C }))^{1/2}.
\end{equs}

\end{lemma}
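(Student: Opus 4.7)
The natural strategy is the classical ``time change by $A$'': since the hypothesis almost lets us apply Gronwall, but the integral $\int Y\,dA$ is driven by a random clock, we partition $[0,1]$ into (random) subintervals on which $A$ grows by a controlled amount. Concretely, define $\tau_0=0$ and, for $k\geq 0$,
\begin{equ}
\tau_{k+1}=\inf\{t\geq \tau_k: A_t-A_{\tau_k}\geq \tfrac{1}{2C}\}\wedge 1.
\end{equ}
Since $A$ is continuous and adapted, each $\tau_k$ is a stopping time with $\tau_k\leq 1$, and on each interval $[\tau_k,\tau_{k+1}]$ one has $A_{\tau_{k+1}}-A_{\tau_k}\leq 1/(2C)$.

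On such an interval, the assumption \eqref{eq:ass-gron} combined with
\begin{equ}
\E\int_{\tau_k}^{\tau_{k+1}}Y_s\,dA_s\leq \E\Big[\sup_{t\in[\tau_k,\tau_{k+1}]}Y_t\cdot (A_{\tau_{k+1}}-A_{\tau_k})\Big]\leq \tfrac{1}{2C}\E\sup_{t\in[\tau_k,\tau_{k+1}]}Y_t
\end{equ}
(which is finite thanks to $\E\sup_t Y_t^2<\infty$) allows us to absorb the $\int Y\,dA$ term into the left-hand side, yielding
\begin{equ}
\E\sup_{t\in[\tau_k,\tau_{k+1}]}Y_t\leq 2C\,\E Y_{\tau_k}+2CR.
\end{equ}
In particular $\E Y_{\tau_{k+1}}\leq 2C\,\E Y_{\tau_k}+2CR$, and iterating gives $\E Y_{\tau_k}\leq \bar C^k(\E Y_0+R)$ for a constant $\bar C$ depending only on $C$.

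For the final step, split the supremum according to whether the partition has exhausted $[0,1]$:
\begin{equ}
\E\sup_{t\in[0,1]}Y_t=\E\big[\sup_{t\in[0,1]}Y_t;\tau_m\geq1\big]+\E\big[\sup_{t\in[0,1]}Y_t;\tau_m<1\big].
\end{equ}
On $\{\tau_m\geq 1\}$ the supremum is bounded by $\sum_{k=0}^{m-1}\sup_{[\tau_k,\tau_{k+1}]}Y_t$, and the previous display gives a contribution of order $\bar C^m(\E Y_0+R)$ after summing the geometric series. On $\{\tau_m<1\}$ we apply Cauchy--Schwarz; since $\tau_m<1$ forces $A$ to cross the threshold $1/(2C)$ exactly $m$ times before time $1$, hence $A_1\geq m/(2C)$, so $\PP(\tau_m<1)\leq \PP(A_1\geq m/(2C))$, yielding the last term of \eqref{eq:gronwal-conclusion}.

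The mildly delicate point is the very first step: justifying the absorption argument requires $\E\sup_{[\tau_k,\tau_{k+1}]}Y_t<\infty$, which is why the $L_2$-integrability hypothesis on $\sup Y$ is present; everything else is bookkeeping with geometric iteration and a deterministic splitting via Cauchy--Schwarz.
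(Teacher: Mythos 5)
Your proposal is correct and follows essentially the same approach as the paper: define stopping times at which $A$ increases by $1/(2C)$, absorb the $\int Y\,dA$ term on each subinterval using the hypothesis and the $L^2$-integrability of $\sup Y$, iterate the resulting one-step bound geometrically, and handle the event $\{\tau_m<1\}$ by Cauchy--Schwarz together with the inclusion $\{\tau_m<1\}\subset\{A_1\ge m/(2C)\}$. The only presentational difference is that you spell out the absorption step that the paper implicitly invokes when asserting the one-step inequality $\E\sup_{[\tau_{m-1},\tau_m]}Y_t\le 2C\,\E Y_{\tau_{m-1}}+2CR$.
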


\begin{proof}
Let us set $\tau_0:=0$  and for $m \in \mathbb{N}$ let us define inductively
\begin{equs}
 \qquad \tau_m:= \inf\{ t > \tau_{m-1} : A_t- A_{\tau_{m-1}} > (2C)^{-1}\}\wedge 1.
\end{equs}
By \eqref{eq:ass-gron} it follows that for all $m \in \mathbb{N}$
\begin{equs}    \label{eq:iterable}
\E \sup_{ t \in [ \tau_{m-1}, \tau_m ]}  Y_t \leq 2C \E Y _{ \tau_{m-1}} +2 C R.
\end{equs}
Consequently, we have 
\begin{equation}    \label{eq:with-sum}
\E \sup_{ t \in [ 0, \tau_m ]}  Y_t \leq \sum_{j=1}^m \E \sup_{ t \in [ \tau_{j-1}, \tau_j ]}  Y_t  \leq  2C \sum_{j=1}^m  \E  Y_{\tau_{j-1}} +  m2CR.
\end{equation}
Notice that \eqref{eq:iterable}, upon iteration,  implies 
\begin{equs}
\E  Y _{ \tau_m}\leq (2C)^m \E Y _0 + R \sum_{j=1}^m (2C)^j \leq \tilde{C}^m \E Y _0 + \tilde{C}^m R,
\end{equs}
for some $\tilde{C}$ depending only on $C$. This combined with \eqref{eq:with-sum}, gives 
\begin{equs}
\E \sup_{ t \in [ 0, \tau_m ]}  Y_t  \leq  2C m \tilde{C} ^m \E Y_0 +2C m \tilde{C}^m R +    m2CR \leq \bar{C}^m \E Y_0 + \bar{C}^mR,
\end{equs}
for an appropriate $\bar{C}$ depending only on $C$. Finally, by H\"older's inequality we have 
\begin{equs}
\E \sup_{ t \in [ 0, 1 ]}  Y_t  \leq & \E \sup_{ t \in [ 0, \tau_m ]}  Y_t+ \big( \E \sup_{ t \in [ 0, 1 ]}  Y_t ^2 \big)^{1/2} \big(  \mathbf{P}(\tau_m < 1) \big)^{1/2}
\\
\leq & \bar{C}^m \E Y_0 + \bar{C}^mR + \big( \E \sup_{ t \in [ 0, 1]}  Y_t ^2 \big)^{1/2} \big(  \mathbf{P}(\tau_m < 1) \big)^{1/2},
\end{equs}
which implies \eqref{eq:gronwal-conclusion}, since $\{ \tau_m <1\} \subset \{A_1 \geq m(2C)^{-1}\}$ by the definition of $\tau_m$. This finishes the proof.
\end{proof}

\section{Integral estimates}\label{sec:quad}
In this section, we obtain estimates with sharp rates for the quantity
\[
	\int_0^1 \big(f_r(X^n_r)-f_r(X^n_{\kappa_n(r)})\big)\, dr
\]
where $f$ is a bounded measurable function on $[0,1]\times\Rd$. The main results of the current section are \cref{cor:Girsanov-multiplicative,cor:Girsanov-additive}. Our method is based on stochastic sewing techniques, \cref{lem:wSSL}. The analytic properties which we utilize to verify the hypotheses of \cref{lem:wSSL} are the heat kernel estimates from Section \ref{sec:HK} and estimates on the density of the Euler--Maruyama approximations, see \eqref{eq:density-bound-1} below. We consider the cases of multiplicative noise and additive noise, corresponding to \cref{asn:multiplicative-basic,asn:additive-basic}, separately.

\subsection{Multiplicative noise}
As in \cite{ButDarGEr}, we first consider the driftless Euler--Maruyama scheme
\begin{equ}\label{eq:EM no drift}
d\bar X^n_t=\sigma(\bar X^n_{\kappa_n(t)})\,dB_t,\quad \bar X^n_0=x_0.
\end{equ}
We will sometimes denote the dependence on the initial condition by writing $\bar X^n_t(x_0)$, but most of the time this dependence will not play any role and therefore will be omitted from the notation.

Let us recall some key estimates for the transition probabilities of the $\bar X^n_t$.
Notice that a trivial induction argument shows that $\bar X^n_t$ has a density for all $t>0$. 
A useful bound for the density in $L_p$ spaces is due to Gy\"ongy and Krylov, see \cref{lem:Krylov-Gyongy-estimate} below. 
Notice that in Lemma \ref{lem:Krylov-Gyongy-estimate} one can not put derivatives on the test function $G$.
For this reason, another bound on the transition probabilities of $\bar X$ was derived in \cite{ButDarGEr}:
under \cref{asn:multiplicative-basic}, by \cite[Theorem~5.1]{ButDarGEr}, for any $G\in C^1$, $t=1/n,2/n,\ldots,1$, one has 
\begin{equ}\label{eq:density-bound-1}
|\E(\nabla G)(\bar X^n_t)|\leq N \|G\|_{\bB}t^{-1/2} + N\|G\|_{C^1}e^{-cn}.
\end{equ}
with some constant $N=N(d,\lambda,\|\sigma\|_{C^2})$ and $c=c(d,\|\sigma\|_{C^2})>0$.

Estimate \eqref{eq:density-bound-1} is applied to obtain the following result, extending \cite[Lem.~6.1]{ButDarGEr} from H\"older continuous functions to bounded measurable functions.
\begin{lemma}\label{lem:(ii) mult}
Let $y\in\R^d$, $\eps\in(0,1/2)$,  $p>0$.
Suppose that \cref{asn:additive-basic} holds and that $\bar X^n:=\bar X^n(y)$ is the solution of \eqref{eq:EM no drift}.
Then for all $f\in \bB([0,1]\times\R^d)$, $0\leq s\leq t\leq 1$, $n\in\N$, one has the bound
\begin{equation}\label{DKBound}
\big\|\int_s^t (f_r(\bar X_r^n)-f_r(\bar X_{\kappa_n(r)}^n))\, dr\big\|_{L_p(\Omega)}
\leq N\|f\|_{\bB([0,1]\times\R^d)} n^{-1/2+2 \eps}|t-s|^{1/2+\eps} ,
\end{equation}
with some $N=N(p, d,\eps,\lambda,\|\sigma\|_{C^2})$.
\end{lemma}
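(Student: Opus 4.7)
The plan is to apply the weighted stochastic sewing lemma (Lemma~\ref{lem:wSSL}) to the germ
\[
A_{s,t} := \E^s \int_s^t \bigl(f_r(\bar X_r^n) - f_r(\bar X_{\kappa_n(r)}^n)\bigr)\,dr,
\]
and to identify the sewn process $\cA_t$ with $I_t := \int_0^t (f_r(\bar X_r^n) - f_r(\bar X_{\kappa_n(r)}^n))\,dr$. Hypothesis \eqref{eq:SSL2} is immediate because $I$ is additive in its endpoints: $\E^s\delta A_{s,u,t} = \E^s(\E^s I_{u,t} - \E^u I_{u,t}) = 0$, so one can take $C_2=0$ and $\delta_2=0$. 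All the work lies in verifying \eqref{eq:SSL 1}.

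The key pointwise bound is, for $\kappa_n(r) > s$,
\[
\bigl\|\E^s[f_r(\bar X_r^n) - f_r(\bar X_{\kappa_n(r)}^n)]\bigr\|_{L_p(\Omega)} \lesssim \|f\|_{\bB}\, n^{-1/2}(\kappa_n(r)-s)^{-1/2},
\]
while the trivial bound $\lesssim \|f\|_{\bB}$ handles the $r$-interval (of length $\le 1/n$) where $\kappa_n(r) \le s$. For the key bound I would condition on $\cF_{\kappa_n(r)}$ and use $\bar X_r^n - \bar X_{\kappa_n(r)}^n = \sigma(\bar X_{\kappa_n(r)}^n)(B_r - B_{\kappa_n(r)})$ to rewrite the inner expression as $\E^s[g_r(\bar X_{\kappa_n(r)}^n)]$, where
\[
g_r(z) := (\cP_{(r-\kappa_n(r))\sigma\sigma^*(z)} f_r)(z) - f_r(z).
\]
Expanding the semigroup difference as the time-integral of its generator and rearranging into divergence form $g_r = \nabla\cdot G_r + R_r$, both $G_r$ and $R_r$ have $\bB$-norm $\lesssim n^{-1/2}\|f\|_{\bB}$ by the heat-kernel estimates of Section~\ref{sec:HK} (using $r-\kappa_n(r) \le 1/n$, the semigroup gradient bound, the $\Sigma$-derivative bound \eqref{eq:HK-diff-Sigma}, and $\sigma\in C^2_b$ to handle the terms coming from the $z$-dependence of $\sigma\sigma^*(z)$ inside $\cP$). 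Applying the density bound \eqref{eq:density-bound-1} to $\bar X^n_{\kappa_n(r)}$ conditionally on $\cF_s$ then produces the factor $(\kappa_n(r)-s)^{-1/2}$, with the exponentially small correction in \eqref{eq:density-bound-1} being negligible.

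Combining the two regimes via $\min(1,a) \le a^{1-2\eps}$ and integrating over $r \in [s,t]$ yields $\|A_{s,t}\|_{L_p} \lesssim \|f\|_{\bB}\, n^{-1/2+\eps}(t-s)^{1/2+\eps}$, i.e.\ \eqref{eq:SSL 1} with $C_1 \sim \|f\|_{\bB} n^{-1/2+\eps}$, $\delta_1 = 0$, $\eps_1 = \eps$. Lemma~\ref{lem:wSSL} then provides a sewn process $\cA$ with $\|\cA_{s,t}\|_{L_p} \lesssim \|f\|_{\bB}\, n^{-1/2+\eps}(t-s)^{1/2+\eps}$ for $(s,t) \in [S,1]_\le^*$, $S \in (0,1]$. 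To identify $\cA = I$, note that $\|I_{s,t} - A_{s,t}\|_{L_p} = \|I_{s,t} - \E^s I_{s,t}\|_{L_p} \le 4\|f\|_{\bB}(t-s)$ fits \eqref{eq:SSL3} (the exponent $1$ exceeds $1/2+\eps$), while $\E^s(I_{s,t} - A_{s,t}) = 0$ makes \eqref{eq:SSL4} trivial; uniqueness in the SSL forces $\cA = I$. To extend from $(s,t) \in [S,1]_\le^*$ to all $0 \le s \le t \le 1$, the regime $s \ge t/2$ is already covered (take $S = t-s$); for $s < t/2$, a dyadic decomposition $[s,t] = \bigcup_k [2^k s, 2^{k+1}s]$ together with the subadditivity $a^{1/2+\eps} + b^{1/2+\eps} \le 2(a+b)^{1/2+\eps}$ produces the same bound; and $(t-s) \le 1/n$ or $s = 0$ is handled by the trivial bound $2\|f\|_{\bB}(t-s)$, which fits the target because $(t-s)^{1/2-\eps} \le n^{-1/2+\eps}$ when $(t-s) \le 1/n$. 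Monotonicity $\|\cdot\|_{L_p} \le \|\cdot\|_{L_q}$ for $p \le q$ extends from $p \ge 2$ (required by Lemma~\ref{lem:wSSL}) to all $p > 0$.

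The main technical obstacle is the divergence-form decomposition $g_r = \nabla \cdot G_r + R_r$ in the presence of a position-dependent covariance $\sigma\sigma^*(z)$ inside $\cP_{\tau\sigma\sigma^*(z)}$: chain-rule manipulations produce derivatives both in the evaluation variable and in the matrix argument of the semigroup, and these extra contributions to $R_r$ must be absorbed using uniform ellipticity, $\|\sigma\|_{C^2} < \infty$, and the bound \eqref{eq:HK-diff-Sigma}.
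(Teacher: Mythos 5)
Your overall architecture matches the paper's: the same germ $A_{s,t}=\E^s\int_s^t(f_r(\bar X^n_r)-f_r(\bar X^n_{\kappa_n(r)}))\,dr$, the observation $\E^s\delta A_{s,u,t}=0$, the reduction of the conditional increment to $\E[g_r(\bar X^n_{\kappa_n(r)})]$ with $g_r(z)=\cP_{(r-\kappa_n(r))C(z)}f_r(z)-f_r(z)$, the density estimate \eqref{eq:density-bound-1}, and the identification of the sewn process with the actual integral via the trivial bounds. There is, however, a genuine gap at the central analytic step: the density bound \eqref{eq:density-bound-1} is a statement for $G\in C^1$, with $\|G\|_{C^1}$ appearing (multiplied by $e^{-cn}$) in the estimate, and the $G_r$ your decomposition $g_r=\nabla\cdot G_r+R_r$ produces is not $C^1$. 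Concretely, writing $g_r(z)=\int_0^\tau\frac12 C(z)\cdot(\nabla^2\cP_{sC(z)}f_r)(z)\,ds$ and shifting one $\nabla$ outside, one gets $G_r(z)=\frac12\int_0^\tau C(z)(\nabla\cP_{sC(z)}f_r)(z)\,ds$, which is bounded by $\tau^{1/2}\|f\|_{\bB}\lesssim n^{-1/2}\|f\|_{\bB}$ as you say; but $\nabla_z G_r$ picks up the term $\int_0^\tau C(z)(\nabla^2\cP_{sC(z)}f_r)(z)\,ds$, and for merely bounded $f_r$ one has $\|\nabla^2\cP_{sC}f_r\|_{L_\infty}\sim s^{-1}\|f\|_\bB$, so this integral is not absolutely convergent near $s=0$. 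Thus $G_r\notin C^1$, \eqref{eq:density-bound-1} cannot be applied, and mollifying $f_r$ to force $G_r\in C^1$ reintroduces an error term that does not obviously vanish in $L_\infty$ (only in $L_q$, $q<\infty$), so the approximation argument is not trivial.

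This is precisely the obstruction the paper's extra step is designed to circumvent. The paper does \emph{not} put $g$ itself into divergence form; it first applies one Euler transition $\tilde\cP h(x)=\E h(\bar X^n_{1/n}(x))$ to obtain $\tilde g=\tilde\cP g$, and proves (\eqref{eq:pasta}, \eqref{eq:pizza}) the smoothing estimates $\|\tilde g\|_{C^\eps}\lesssim n^{\eps/2}\|g\|_\bB$ and $\|\tilde g\|_{C^{-1+\eps}}\lesssim n^{\eps/2}\|g\|_{C^{-1+\eps}}$. This regularisation—available because one step of the scheme is a Gaussian convolution—combined with the Schauder estimate (\cref{lem:schauder}) for $(1-\Delta)\tilde u=\tilde g$ yields a genuinely $C^2$ potential $\tilde u$, and then $\Delta\tilde u=\sum_k\partial_k\partial_k\tilde u$ is a legitimate input for \eqref{eq:density-bound-1} with $G=\partial_k\tilde u\in C^1$. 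The factor $n^{\eps/2}$ paid twice accounts for the $n^{-1/2+2\eps}$ (rather than your claimed $n^{-1/2+\eps}$). A smaller, repairable imprecision in your write-up is that $s$ is generally not a grid point, so one must condition at $\cF_{(k+1)/n}$ (or iterate the Markov property as the paper does with $\tilde\cP$) before invoking \eqref{eq:density-bound-1}, which is only stated for elapsed times that are multiples of $1/n$; but this is cosmetic compared to the missing $C^1$-regularity of $G_r$.
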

\begin{proof}
Parts of the proof are unchanged from \cite[Lem~6.1]{ButDarGEr}, therefore occasionally we shall refer back to arguments therein.
By the linearity of the left-hand side in $f$, we may and will assume $\|f\|_{\bB([0,1]\times\R^d)}=1$. Set $0\leq s\leq t\leq 1$,
$$
A_{s,t}:=\E^s \int_s^t \big(f_r(\bar X_r^n)-f_r(\bar X_{\kappa_n(r)}^n)\big)\, dr.
$$
We will use Lemma \ref{lem:wSSL} with $\delta_1=\delta_2=0$ (which is the same as the original version of stochastic sewing lemma from \cite{Khoa}).
It is straightforward to verify that $\E^s\delta A_{s,u,t}=0$ for any $0\leq s\leq u\leq t\leq 1$. Therefore \eqref{eq:SSL2} is verified with $C_2=0$.

The bulk of the proof is the verification of \eqref{eq:SSL 1} with 
$C_1=N n^{-1/2+2\eps}$, $\eps_1=\eps$.
Define $k$ by $k/n=\kappa_n(s)$. If $t\in[s,(k+4)/n]$,  then $|t-s|\le 4 n^{-1}$ and it is evident that
\begin{equation}\label{eq:easy-from-BDG}
\|A_{s,t}\|_{L_p(\Omega)}
 \leq 2|t-s|
 \lesssim n^{-1/2+\eps}|t-s|^{1/2+\eps},
\end{equation}
as required.
For the case $t>(k+4)/n$, one first writes
\begin{equs}
|A_{s,t}|= | I_1+I_2|:= \Big|\Big(\int_s^{(k+4)/n}
 +\int_{(k+4)/n}^t\Big) \E^s \big( f_r(\bar X^n_r)-f_r(\bar X^n_{\kappa_n(r)})\big)\, dr\Big|.
\end{equs}
The term $|I_1|$ is easily seen to be bounded by the right-hand-side of \eqref{eq:easy-from-BDG} even for all $\omega$, and so clearly $\|I_1\|_{L_p(\Omega)}\les n^{-1/2+\varepsilon}|t-s|^{1/2+\varepsilon}$. Therefore, \eqref{eq:SSL2} is verified once we show
\begin{equ}\label{eq:sushi}
\|I_2\|_{L_p(\Omega)}\lesssim  |t-s|^{1/2+\eps} n^{-1/2+2\eps}.
\end{equ}
For $r\in[(k+4)/n,t]$, we have $s\le (k+1)/n\le \kappa_n(r)$ so that we can write
\begin{equs}
I_2 =  \E^s \int_{(k+4)/n}^t  \E^{(k+1)/n}\big(\E^{\kappa_n(r)} f_r(\bar X^n_r)-f_r(\bar X^n_{\kappa_n(r)})\big) \, dr.
\end{equs}
Denote $C(x)=(\sigma\sigma^*)(x)$. We have
\begin{equation*}
\E^{\kappa_n(r)} f_r(\bar X^n_r)=\E^{\kappa_n(r)} f_r\big(\bar X^n_{\kappa_n(r)}+(B_r-B_{\kappa_n(r)})\sigma(\bar X^n_{\kappa_n(r)})\big)=\cP_{C(\bar X^n_{\kappa_n(r)})(r-\kappa_n(r))} f_r  (\bar X^n_{\kappa_n(r)}),
\end{equation*}
so with $g^n_r(x):=\cP_{C(x)(r-\kappa_n(r))}f_r(x)-f_r(x)$
we have
\begin{equ}\label{eq:I2 mult}
I_2=\E^s\int_{(k+4)/n}^t\E^{(k+1)/n}g^n_r(\bar X^n_{\kappa_n(r)})\,dr.
\end{equ}
%To ease notation, we drop the indices $r$ an $n$ from $g$. 
By \cite[Eq.~(6.7)]{ButDarGEr}, one has the bound
\begin{equ}\label{eq: g bound}
\|g_r^n\|_{C^\beta}\lesssim \|f\|_{\bB}\, n^{\beta/2}= n^{\beta/2}
\end{equ}
for all $\beta\in[-1,0]$, $r\in[0,1]$ and $n\in\N$.
Define the operator $\tilde \cP$ by $(\tilde\cP h)(x)=\E h(\bar X^n_{1/n}(x))$ and denote $\tilde g=\tilde \cP g$.
By the tower rule and applying the Markov property twice one gets
\begin{equs}
\E^{(k+1)/n}g^n_r(\bar X^n_{\kappa_n(r)})&=\E^{(k+1)/n}\E^{\kappa_n(r)-1/n}g^n_r(\bar X^n_{\kappa_n(r)})
\\
&=\E^{(k+1)/n}\tilde g^n_r(\bar X^n_{\kappa_n(r)-1/n})
\\
&=\big(\E\tilde g^n_r(\bar X^n_{\kappa_n(r)-(k+2)/n}(y))\big)\big|_{y=\bar X^n_{(k+1)/n}}.\label{i guess}
\end{equs}
Introduce the functions $u$ and $\tilde u$ as the solutions of the equations
\begin{equ}
(1-\Delta)u=g,\qquad(1-\Delta)\tilde u=\tilde g.
\end{equ}
It follows from Lemma \ref{lem:schauder} that these solutions exist, are unique, and satisfy the bounds
\begin{equ}\label{i guess 2}
\|u\|_{C^{\beta+2}}\leq N(\beta)\|g\|_{C^\beta},\qquad \|\tilde u\|_{C^{\beta+2}}\leq N(\beta)\|\tilde g\|_{C^\beta}
\end{equ}
for all $\beta\in\R\setminus\Z$.
Denote $\delta=\kappa_n(r)-(k+2)/n$.
One then has, by  \eqref{i guess}, \eqref{eq:density-bound-1}, and \eqref{i guess 2} 
\begin{equs}
|\E^{(k+1)/n}g(\bar X^n_{\kappa_n(r)})| & \leq  \sup_{y\in\R^d} |\E (\tilde u-\Delta \tilde u)  \big(\bar X^n_{\delta}(y)\big)|
\\& \lesssim \|\tilde u \|_{C^1} \delta^{-1/2}+\|\tilde u \|_{C^2} e^{-cn}
\\& \lesssim  \|\tilde g \|_{C^{-1+\eps}} \delta^{-1/2} + \|\tilde g\|_{C^{\eps}}e^{-cn}.\label{eq:curry}
%\\
%& \leq  N \|u \|_{C^{1+2\eps_1}}  |\kappa_n(r)-(k+1)/n|^{-1/2}+N \|u \|_{C^2} e^{-cn}
%\\
%& \leq  N  \|g\|_{C^{-1+2\eps_1}}  |\kappa_n(r)-(k+1)/n|^{-1/2}+N \|g \|_{C^{\alpha/2}} e^{-cn}
%\\
%& \leq N \|f\|_{C^\alpha} n^{-1/2+\eps_1}|\kappa_n(r)-(k+1)/n|^{-1/2}
 \end{equs}
First we find a bound on $\|\tilde g\|_{C^{\eps}}$. Clearly one has $\|\tilde g\|_{C^{0}}\leq\|g\|_{C^{0}}$. 
Furthermore, 
\begin{equs}
|\tilde g(x)-\tilde g(y)|&=\big|\int_{\R^d} \big(p_{\frac{C(x)}{n}}(x-z)-p_{\frac{C(y)}{n}}(y-z)\big)g(z)\,dz\big|
\\
&\leq\|g\|_{\bB}\int_{\R^d}
\big| p_{\frac{C(x)}{n}}(x-z)-p_{\frac{C(x)}{n}}(y-z)\big|
+\big|p_{\frac{C(x)}{n}}(y-z)-p_{\frac{C(y)}{n}}(y-z)\big|\,dz.
%\\
%&\leq|x-y|^{\eps'}n^{\eps'/2}.
\end{equs}
The first term in the integral is bounded via \eqref{eq:HK-bound-35}:
\begin{equ}
\big| p_{\frac{C(x)}{n}}(x-z)-p_{\frac{C(x)}{n}}(y-z)\big|\lesssim
|x-y|^{\eps}n^{\eps/2}\|\big(C(x)\big)^{-1}\|^{\eps/2}\big| p_{\frac{C(x)}{2n}}(x-z)+p_{\frac{C(x)}{2n}}(y-z)\big|.
%\lesssim|x-y|^{\eps'}n^{\eps'/2},
\end{equ}
By \cref{asn:multiplicative-basic}, we have that
$\|\big(C(x)\big)^{-1}\|\leq N$.
Concerning the other term, we wish to apply \eqref{eq:tired}. To this end, using again \cref{asn:multiplicative-basic}, we have
\begin{equ}
% \big(
\|I-C(x)\big(C(y)\big)^{-1}\|
% +\|I-\big(C(x)\big)^{1/2}\big(C(y)\big)^{-1}\big(C(x)\big)^{1/2}\|\big)
\le\|(C(y))^{-1}\|  \|C(y)-C(x)\|\lesssim |x-y|.
\end{equ}
By \eqref{eq:tired} this implies
\begin{equ}
\big|p_{\frac{C(x)}{n}}(y-z)-p_{\frac{C(y)}{n}}(y-z)\big|\lesssim |x-y|
\big|p_{\frac{C(x)}{2n}}(y-z)+p_{\frac{C(y)}{2n}}(y-z)\big|.
\end{equ}
At this point we simply use the fact that any heat kernel $p_{\Sigma}$ has unit integral to conclude that for any $x,y$ with $|x-y|\leq 1$
\begin{equ}
|\tilde g(x)-\tilde g(y)|\lesssim \|g\|_{\bB}|x-y|^{\eps}n^{\eps/2},
\end{equ}
or,
\begin{equ}\label{eq:pasta}
\|\tilde g\|_{C^{\eps}}\lesssim n^{\eps/2}\|g\|_{\bB}.
\end{equ}
Next, we bound $\|\tilde g\|_{C^{-1+\eps}}$. Recall that $\tilde g=\tilde \cP\big((1-\Delta) u\big)$, and one can write the trivial bounds
\begin{equ}\label{eq:risotto}
\|\tilde \cP u\|_{C^{-1+\eps}}\leq\|\tilde \cP u\|_{C^0}\leq \|u\|_{C^{0}}\lesssim  \|g\|_{C^{-1+\eps}}.
\end{equ}
Also 
\begin{equ}\label{eq:taco}
\|\d_k\tilde\cP\d_k u\|_{C^{-1+\eps}}\lesssim \|\tilde \cP\d_k u\|_{C^{\eps}}
\lesssim  n^{\eps/2}\|\d_k u\|_{C^{\eps}}\lesssim n^{\eps/2}\|g\|_{C^{-1+\eps}},
\end{equ}
where in the second inequality we used the previous argument.
Putting $U=\d_k u$, it remains to estimate the commutator $\d_k\tilde\cP U-\tilde\cP\d_k U$ in the norm $C^{-1+\eps}$. It turns out that it can even be bounded in $C^{0}$. Indeed,
\begin{equs}
\big(\d_k\tilde\cP U-\tilde\cP\d_k U\big)(x)&=
\int_{\R^d}\Big(\d_{y_k} p_{\frac{C(y)}{n}}(x-z)\Big)\Big|_{y=x}
U(z)\,dz
\\
&=\int_{\R^d}\Big(\d_\Sigma p_{\Sigma}(x-z)\Big)\Big|_{\Sigma=C(x)/n}\frac{\d_{x_k}C(x)}{n}
U(z)\,dz
\\
&\lesssim\int_{\R^d}p_{\frac{C(x)}{2n}}(x-z)U(z)\,dz
\lesssim \|U\|_{\bB}\lesssim \|u\|_{C^{1+\eps}}\lesssim \|g\|_{C^{-1+\eps}},\label{eq:burger}
\end{equs}
where for $\d_\Sigma p_\Sigma$ we used \eqref{eq:HK-diff-Sigma}.
From \eqref{eq:risotto}, \eqref{eq:taco}, \eqref{eq:burger}, we can conclude
\begin{equ}\label{eq:pizza}
\|\tilde g\|_{C^{-1+\eps}}\lesssim n^{\eps/2}\|g\|_{C^{-1+\eps}}.
\end{equ}
Putting \eqref{eq:pasta} and \eqref{eq:pizza} into \eqref{eq:curry}, and then using \eqref{eq: g bound},
\begin{equs}
|\E^{(k+1)/n}g(\bar X^n_{\kappa_n(r)})|&\lesssim  n^{\eps/2}\delta^{-1/2}\|g\|_{C^{-1+\eps}}+ n^{\eps/2}e^{-cn}\|g\|_{\bB}
\\
&\lesssim  n^{-1/2+\eps}\delta^{-1/2}+Nn^{\eps/2} e^{-cn}.
%\\
%&\leq N n^{-1/2+2\eps'}\delta^{1/2+\eps'}\|f\|_{\bB}.
\end{equs}
Recall that we defined $\delta=\kappa_n(r)-(k+2)/n$ and we are in the situation of \eqref{eq:I2 mult}. In particular, $\delta\geq 2/n$ and so we can further write
\begin{equ}
|\E^{(k+1)/n}g(\bar X^n_{\kappa_n(r)})|\lesssim  n^{-1/2+2\eps}\delta^{-1/2+\eps}.
\end{equ}
Substituting this bound back into \eqref{eq:I2 mult} and integrating, we get \eqref{eq:sushi}.

At this point, the conditions of Lemma \ref{lem:wSSL} are satisfied. It only remains to identify the process $\cA$. We claim that it is given by
\begin{equ}
\hat \cA_t=\int_0^t (f_r(\bar X_r^n)-f_r(\bar X_{\kappa_n(r)}^n))\, dr.
\end{equ}
Clearly $\hat\cA$ is adapted and starts from $0$. Moreover, $A_{s,t}=\E^s\hat\cA_{s,t}$ and therefore \eqref{eq:SSL4} is satisfied with $K_2=0$. On the other hand, one has the trivial bound
\begin{equ}
\|\hat\cA_{s,t}-A_{s,t}\|_{L_p(\Omega)}\leq\|\hat\cA_{s,t}\|_{L_p(\Omega)}+\|A_{s,t}\|_{L_p(\Omega)}\leq 4 |s-t|,
\end{equ}
which verifies \eqref{eq:SSL3} with $K_1=4 $. It therefore follows that $\hat\cA=\cA$, and the bound \eqref{eq:SSL-conclusion} is precisely our claimed bound \eqref{DKBound}.
\end{proof}

\begin{corollary}\label{cor:Girsanov-multiplicative}
Let $\eps\in(0,1/2)$, $p> 0$.
Suppose that \cref{asn:multiplicative-basic} holds and that $X^n$ is the solution of \eqref{eq:main-EM}. Then for all $f\in \bB([0,1]\times\R^d)$, $n\in\N$, one has the bound
\begin{equation}\label{eq:sup-quad-bound-mult}
\big\|\int_0^{\cdot} \big(f_r(X^n_r)-f_r(X^n_{\kappa_n(r)})\big)\, dr\big\|_{L_p(\Omega; \bB[0,1])}
\leq N\|f\|_{\bB([0,1]\times\R^d)} n^{-1/2+2\eps},
\end{equation} 
with some $N=N(p,d,\eps,\lambda,\|b\|_{\bB},\|\sigma\|_{C^2})$.
\end{corollary}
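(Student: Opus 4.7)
The strategy is to transfer the estimate of \cref{lem:(ii) mult} for the driftless Euler--Maruyama scheme $\bar X^n$ over to the full scheme $X^n$ via Girsanov's theorem, and then to upgrade the resulting pointwise-in-time bound to a supremum bound by Kolmogorov's continuity theorem.

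Concretely, I would first perform a Girsanov transformation. Set $\eta_t:=\sigma^{-1}(X^n_{\kappa_n(t)})b(X^n_{\kappa_n(t)})$, which under \cref{asn:multiplicative-basic} is uniformly bounded by $\lambda^{-1}\|b\|_{\bB}$. Introduce the exponential martingale
\begin{equation*}
M_t=\exp\Big(-\int_0^t\eta_s\cdot dB_s-\tfrac12\int_0^t|\eta_s|^2\,ds\Big),
\end{equation*}
and the measure $d\tilde\bP:=M_1\,d\bP$. Under $\tilde\bP$, by Girsanov's theorem $W^n_t:=B_t+\int_0^t\eta_s\,ds$ is a standard Brownian motion and $X^n$ solves $dX^n_t=\sigma(X^n_{\kappa_n(t)})\,dW^n_t$; hence the law of $X^n$ under $\tilde\bP$ equals that of $\bar X^n$ under $\bP$. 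Since $|\eta|\le\lambda^{-1}\|b\|_{\bB}$, a routine application of Cauchy--Schwarz together with the martingale property of suitable exponential martingales shows $\|M_1^{-1}\|_{L_q(\tilde\bP)}\le C(q,\lambda,\|b\|_{\bB})$ for every $q\ge1$.

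Next I would transfer the incremental estimate. Denote $\phi^n_t:=\int_0^t(f_r(X^n_r)-f_r(X^n_{\kappa_n(r)}))\,dr$ and let $\bar\phi^n_t$ be the analogous process driven by $\bar X^n$. Fix any integer $p_0\ge\max(p,3)$ so that $p_0(1/2+\eps)>1$. For conjugate exponents $q,q'\in(1,\infty)$ and any $(s,t)\in[0,1]_\le$, Hölder's inequality combined with the change of measure yields
\begin{equation*}
\E_\bP|\phi^n_t-\phi^n_s|^{p_0}=\tilde\E\big[M_1^{-1}|\phi^n_t-\phi^n_s|^{p_0}\big]\le\|M_1^{-1}\|_{L_{q'}(\tilde\bP)}\cdot\|\bar\phi^n_t-\bar\phi^n_s\|^{p_0}_{L_{p_0q}(\bP)},
\end{equation*}
where the last identity uses the equality of laws under $\tilde\bP$ and $\bP$. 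Applying \cref{lem:(ii) mult} to the right-hand side with exponent $p_0 q$ then produces the incremental bound $\|\phi^n_t-\phi^n_s\|_{L_{p_0}(\bP)}\le N\|f\|_{\bB}\,n^{-1/2+2\eps}|t-s|^{1/2+\eps}$ with a constant $N$ depending on $p,d,\eps,\lambda,\|b\|_{\bB},\|\sigma\|_{C^2}$.

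Finally, since $\phi^n$ is Lipschitz in $t$ (in particular continuous) and $\phi^n_0=0$, Kolmogorov's continuity theorem applied with this incremental bound and the above choice of $p_0$ gives $\|[\phi^n]_{C^\gamma[0,1]}\|_{L_{p_0}(\bP)}\le N\|f\|_{\bB}\,n^{-1/2+2\eps}$ for some $\gamma>0$, and hence also $\big\|\|\phi^n\|_{\bB[0,1]}\big\|_{L_{p_0}(\bP)}\le N\|f\|_{\bB}\,n^{-1/2+2\eps}$ by using $\phi^n_0=0$ to dominate the sup by the Hölder seminorm. A final application of Jensen's inequality lowers the integrability exponent from $p_0$ to $p$ when $p<p_0$, yielding \eqref{eq:sup-quad-bound-mult}. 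No substantive obstacle arises in this plan: both the Girsanov change of measure and the Kolmogorov upgrade are routine once \cref{lem:(ii) mult} is in hand.
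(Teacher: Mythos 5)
Your proposal uses exactly the same ingredients as the paper's proof---\cref{lem:(ii) mult}, Girsanov's theorem with the bounded Radon--Nikodym density coming from $\sigma^{-1}b$ being bounded, Kolmogorov's continuity criterion, and Jensen to drop the moment---so it is essentially the paper's argument. The only difference is the order of operations: the paper applies Kolmogorov first to the driftless process $\bar\phi^n$ and then does a single Girsanov/Cauchy--Schwarz step on the supremum functional, whereas you apply Girsanov pointwise in $(s,t)$ to transfer the incremental bound to $\phi^n$ and then run Kolmogorov; both routes are valid and give the same constant dependence.
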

\begin{proof}
Owning to Jensen's inequality, it suffices to prove the statement for $p\geq 2$.
For any continuous process $Z$, let us denote
\begin{equ}
h(Z)=\big\|\int_0^{\cdot} \big(f_r(Z_r)-f_r(Z_{\kappa_n(r)})\big)\, dr\big\|_{ \bB([0,1])}.
\end{equ}
From Kolmogorov's continuity criterion and \eqref{DKBound}, one immediately gets
\begin{equ}\label{eq:cheese}
\|h(\bar X^n)\|_{L_{2p}(\Omega)}\leq   N\|f\|_{\bB([0,1]\times\R^d)} n^{-1/2+2\eps}.
\end{equ} 
Let us set
\begin{equs}
\rho = \exp\left(-\int_0^1  (\sigma^{-1}b)(X_{\kappa_n(r)}^n) \, dB_r - \frac{1}{2}\int_0^1  \big|(\sigma^{-1}b)(X_{\kappa_n(r)}^n)\big|^2 \, dr  \right)
\end{equs}
and define the measure $\tilde\bP$ by  $d \tilde\bP = \rho d \bP$. Since $\sigma^{-1}b$ is a bounded measurable function, $\E \rho^\theta$ is finite for every $\theta\in\R$. 
By Girsanov's theorem, $\tilde\bP$ is a probability measure and $X^n$ solves \eqref{eq:EM no drift} with a $\tilde\bP$-Wiener process $\tilde B$ in place of $B$. In other words, the law of $X^n$ under $\tilde\bP$ and the law of $\bar X^n$ under $\bP$ coincide.
Therefore,
\begin{equs}
\E h(X^n)^p=\tilde \E \big(h(X^n)^p\rho^{-1}\big)\leq\big(\tilde \E h(X^n)^{2p}\big)^{1/2}\big(\tilde\E\rho^{-2}\big)^{1/2}=\big( \E h(\bar X^n)^{2p}\big)^{1/2}\big(\E\rho^{-1}\big)^{1/2}.
\end{equs}
Note that $\E\rho^{-1}$ is bounded by a constant depending only on the supremum of $\sigma^{-1}b$, which in turn is bounded by $\lambda^{-1}\|b\|_{\bB}$. Combining this with \eqref{eq:cheese}, we get the desired bound \eqref{eq:sup-quad-bound-mult}.
\end{proof}

\subsection{Additive noise}
\begin{lemma}\label{lem:(ii)}

Let $\alpha \in (0,1)$, $p \geq 2$, $\eps \in (0, 1/2)$, $\alpha' \in ( 1-2 \eps, 1)$, and $m\geq d$ such that $m > p$. 
Let $f\in \bB([0,1],\dot{W}^{\alpha}_m(\R^d))\cap \bB([0,1]\times\R^d)$ and $g \in \bB([0,1],C^{\alpha'}(\R^d))$ . Then for all $S\in(0,1]$, $(s,t ) \in [S,1]^*_{\leq}$ and $n\in\N$ one has the bound 
\begin{equs}\label{eq:sup-quadr-add}
& \big\|\int_s^t g_r(B_r) \big(  f_r(B_r)-f_r(B_{\kappa_n(r)})\big)\, dr\big\|_{L_p(\Omega)}
\\
\leq &  N\| g\|_{\bB([0,1],C^{\alpha'}(\R^d))}\big(\sup_{r \in [0,1]}[f_r]_{\dot{W}^{\alpha}_m(\R^d)}+\|f\|_{\bB([0,1]\times\R^d)}\big) n^{-(1+\alpha)/2+\eps}|t-s|^{1/2+\eps}S^{- d/(2m) },
\end{equs} 
where 
 $N$ is a constant depending only on 
$d,p,\alpha, m$ and $\eps$.  
\end{lemma}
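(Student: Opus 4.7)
The proof will apply the weighted stochastic sewing lemma \cref{lem:wSSL} to the germ
\[
	A_{s,t} := \E^s \int_s^t g_r(B_r)\,(f_r(B_r) - f_r(B_{\kappa_n(r)}))\, dr.
\]
Since $A_{s,t}$ is itself a conditional expectation, the tower property gives $\E^s\delta A_{s,u,t} = 0$, so hypothesis \eqref{eq:SSL2} is trivially met with $C_2=0$. The sewing output $\cA$ will be identified with $\int_0^\cdot g_r(B_r)(f_r(B_r)-f_r(B_{\kappa_n(r)}))\,dr$: the difference $\cA_{s,t}-A_{s,t}=\int_s^t h_r\,dr - \E^s\int_s^t h_r\,dr$ is a martingale-like increment, and its $L_p$ norm is controlled by the trivial pointwise bound $\|g_r(B_r)(f_r(B_r)-f_r(B_{\kappa_n(r)}))\|_{L_p}\leq N\|g\|_\bB S^{-d/(2m)}n^{-\alpha/2}[f_r]_{\dot W^\alpha_m}$, which follows from \cref{lem:Lp-bound} since $\kappa_n(r)\gtrsim S$ on $[S,1]$. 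Thus the whole game is to verify \eqref{eq:SSL 1} with $C_1=N\|g\|_{\bB([0,1],C^{\alpha'})}([f]+\|f\|_\bB)n^{-(1+\alpha)/2+\epsilon}$, $\delta_1=d/(2m)$, $\epsilon_1=\epsilon$.

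\paragraph*{Decomposition of $A_{s,t}$.}
Write $k/n:=\kappa_n(s)$ and split the integral at $t_\ast:=t\wedge((k+1)/n)$. On the short interval $r\in[s,t_\ast]$ (length at most $1/n$) the random variable $B_{\kappa_n(r)}=B_{k/n}$ is $\cF_s$-measurable, and the trivial bound above integrates to a contribution of order $n^{-1-\alpha/2}S^{-d/(2m)}$, which fits within the target $n^{-(1+\alpha)/2+\epsilon}|t-s|^{1/2+\epsilon}$ whenever $|t-s|\geq 1/n$ (and if $|t-s|<1/n$, the short piece is the whole interval and the same calculation suffices). On the long interval $r\in[t_\ast,t]$, conditioning on $\cF_{\kappa_n(r)}$ and using that $B_r-B_{\kappa_n(r)}$ is $N(0,\tau I)$ and independent of $\cF_{\kappa_n(r)}$, with $\sigma:=\kappa_n(r)-s>0$ and $\tau:=r-\kappa_n(r)$, yields
\[
	\E^s h_r = \cP_\sigma H_r(B_s),\qquad H_r(x)=g_r(x)(\cP_\tau f_r - f_r)(x) + R_r(x),
\]
where $R_r(x):=\int p_\tau(y)(g_r(x+y)-g_r(x))(f_r(x+y)-f_r(x))\,dy$. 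A H\"older-in-$y$ inequality against $\||\cdot|^{\alpha'+d/m+\alpha}p_\tau\|_{L_{m'}}$ (using \eqref{eq:HK-Lp-2}) gives $\|R_r\|_{L_m}\leq N[g]_{C^{\alpha'}}\tau^{(\alpha+\alpha')/2}[f_r]$, so that under the assumption $\alpha'>1-2\epsilon$ the contribution of $\cP_\sigma R_r(B_s)$, bounded via $s^{-d/(2m)}\|R_r\|_{L_m}\leq 2S^{-d/(2m)}\|R_r\|_{L_m}$, integrates cleanly into the target.

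\paragraph*{The commutator and the main obstacle.}
For the remaining piece, the further decomposition
\[
	\cP_\sigma[g_r(\cP_\tau f_r - f_r)] = g_r(\cP_{\sigma+\tau}-\cP_\sigma)f_r + \tilde E_r,\qquad \tilde E_r(x):=\int p_\sigma(z)(g_r(x+z)-g_r(x))(\cP_\tau f_r - f_r)(x+z)\,dz,
\]
isolates a main piece amenable to \cref{pro:time-reg}: applied with $\delta=(1+\alpha)/2-\epsilon$, the proposition gives $\|(\cP_{\sigma+\tau}-\cP_\sigma)f_r\|_{L_m}\leq Nn^{-(1+\alpha)/2+\epsilon}\sigma^{-1/2+\epsilon}[f_r]$, and after the cost $s^{-d/(2m)}\leq 2S^{-d/(2m)}$ from \cref{lem:Lp-bound} together with the integral bound $\int\sigma^{-1/2+\epsilon}\,dr\lesssim|t-s|^{1/2+\epsilon}$, this matches the target precisely. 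The delicate step is the H\"older-Sobolev commutator $\tilde E_r$: a direct Minkowski/H\"older argument in $z$ yields the natural estimate $\|\tilde E_r\|_{L_m}\leq N[g]_{C^{\alpha'}}\sigma^{\alpha'/2}\tau^{\alpha/2}[f_r]$, which after $\int\sigma^{\alpha'/2}\,dr\lesssim|t-s|^{1+\alpha'/2}$ produces a bound of order $n^{-\alpha/2}|t-s|^{1+\alpha'/2}$; this meets the target $n^{-(1+\alpha)/2+\epsilon}|t-s|^{1/2+\epsilon}$ only when $|t-s|\lesssim n^{-1/2}$. To cover the full range $|t-s|\leq S\leq 1$, we plan to combine this $L_m$ estimate with the pointwise bound $\|\tilde E_r\|_\bB\leq N[g]_{C^{\alpha'}}\|f\|_\bB\sigma^{\alpha'/2}$ and interpolate, possibly trading Sobolev regularity for boundedness by means of \cref{lem.triv.em}, carrying out a case analysis by the relative sizes of $\sigma$, $\tau$, and $|t-s|$. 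This interpolation step, and the bookkeeping of powers of $n$, $|t-s|$, and $S$ that it entails, is the main technical hurdle in the proof.
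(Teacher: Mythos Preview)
Your germ $A_{s,t}=\E^s\int_s^t g_r(B_r)(f_r(B_r)-f_r(B_{\kappa_n(r)}))\,dr$ differs from the paper's in a way that turns out to be decisive. The paper \emph{freezes} $g$ at the left endpoint, taking instead
\[
	A_{s,t}=\E^s\int_s^t g_r(B_s)\big(f_r(B_r)-f_r(B_{\kappa_n(r)})\big)\,dr.
\]
With this choice one no longer has $\E^s\delta A_{s,u,t}=0$; the cost is the term
\[
	\E^s\delta A_{s,u,t}=\int_u^t\E^s\Big[(g_r(B_s)-g_r(B_u))\,\E^u\big(f_r(B_r)-f_r(B_{\kappa_n(r)})\big)\Big]\,dr,
\]
which is bounded by $|u-s|^{\alpha'/2}\tilde A_{u,t}$ via H\"older continuity of $g$ and H\"older's inequality in $\omega$ (here $\tilde A_{s,t}=\int_s^t\|\E^s(f_r(B_r)-f_r(B_{\kappa_n(r)}))\|_{L_m}\,dr$). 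The benefit is that the bound on $A_{s,t}$ reduces to $\|g\|_{\bB}\tilde A_{s,t}$, and $\tilde A_{s,t}$ involves only $f$: no commutator appears, and the full time-regularity gain $\|\cP_{r-s}f_r-\cP_{\kappa_n(r)-s}f_r\|_{L_m}\lesssim n^{-(1+\alpha)/2}(\kappa_n(r)-s)^{-1/2}$ from \cref{pro:time-reg} is available. This is precisely where the condition $\alpha'>1-2\eps$ enters: it makes the exponent $1+\eps_2$ with $\eps_2=(\alpha'-1+2\eps)/2>0$ in \eqref{eq:SSL2}.

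Your commutator $\tilde E_r=[\cP_\sigma,g_r](\cP_\tau-I)f_r$ is a genuine obstruction, and the interpolation you propose cannot close it. The only control on $(\cP_\tau-I)f_r$ that carries a power of $\tau$ is $\|(\cP_\tau-I)f_r\|_{L_m}\lesssim\tau^{\alpha/2}[f_r]_{\dot W^\alpha_m}$ (Proposition~\ref{pro:time-reg} with $s=0$ forces $\delta=\alpha/2$), and the commutator with $g_r\in C^{\alpha'}$, $\alpha'<1$, cannot transfer any of the $\cP_\sigma$-smoothing onto $f$. Hence the best you can get is $\|\tilde E_r\|_{L_m}\lesssim\sigma^{\alpha'/2}\tau^{\alpha/2}$, which after integration yields $n^{-\alpha/2}|t-s|^{1+\alpha'/2}$; for $|t-s|$ of order one this misses the target $n^{-(1+\alpha)/2+\eps}$ by the full factor $n^{-1/2+\eps}$. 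Interpolating towards the pointwise bound $\|\tilde E_r\|_{\bB}\lesssim\sigma^{\alpha'/2}\|f\|_{\bB}$ (or invoking \cref{lem.triv.em}) only \emph{decreases} the exponent of $\tau$, so moves you further from the target, not closer. The missing half-power of $n$ comes precisely from letting $\cP_\sigma$ act directly on $(\cP_\tau-I)f_r$ and choosing $\delta>\alpha/2$ in \cref{pro:time-reg}; the commutator structure blocks this. The remedy is the paper's: freeze $g$ at $B_s$ in the germ and pay the price in $\E^s\delta A_{s,u,t}$ instead.
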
 
\begin{proof}
By linearity of the left-hand side in both $g$ and $f$, we may and will assume 
$$
\| g\|_{\bB([0,1],C^{\alpha'}(\R^d))}=\sup_{r \in [0,1]}[f_r]_{\dot{W}^{\alpha}_m(\R^d)}+\|f\|_{\bB([0,1]\times\R^d)}=1.
$$
We define for $(s,t)\in[0,1]_\leq$
$$
A_{s,t}:=\E^s \int_s^t g_r(B_s)\big(f_r(B_r)-f_r(B_{\kappa_n(r)})\big)\, dr.
$$ 
Let us check the conditions of Lemma \ref{lem:wSSL}, with $\delta_1=\delta_2= d/(2m)$. 
We begin by showing that \eqref{eq:SSL 1} holds with $C_1=N n^{-(1+\alpha)/2+\eps}$, $\eps_1=\eps$.
Take $S\in(0,1]$ and $(s,t)\in[S,1]^\ast_\leq$.
Define $k$ by $k/n=\kappa_n(s)$. 
Suppose first that $t \in [(k+4)/n, 1]$. By using the fact that $\|g_r\|_{C^{\alpha'}(\R^d)} \leq 1$ for all $r \in [0,1]$ and that $p < m$, we have that 
\begin{equs}      \label{eq:A-dom-B}
\|A_{s,t}\|_{L_p(\Omega)} \leq \tilde{A}_{s,t}:= \int_s^t \| \E^s  \big( f_r(B_r)-f_r(B_{\kappa_n(r)})\big) \|_{L_m(\Omega)} \, dr.
\end{equs}
Notice that 
\begin{equs}
\tilde{A}_{s,t}= I_1+I_2:= \Big(\int_s^{(k+4)/n}
 +\int_{(k+4)/n}^t\Big) \|\E^s \big( f_r(B_r)-f_r(B_{\kappa_n(r)})\big)\|_{L_m(\Omega)}\, dr .
\end{equs} 
One has, by \eqref{eq:Lp-bound-1}, \eqref{eq:HK-bound-1}, using $n^{-1}< t-s$ and $[f_r]_{\dot{W}^\alpha_m(\R^d)}\leq 1$ for all $r\in[0,1]$,
\begin{equs}
I_2 &\leq \int_{(k+4)/n}^t\|\cP_{r-s}f_r(B_s)-\cP_{\kappa_n(r)-s}f_r(B_s)\|_{L_m(\Omega)}\,dr
\\
&\lesssim \int_{(k+4)/n}^ts^{-d/(2m)}\|\cP_{r-s}f_r-\cP_{\kappa_n(r)-s}f_r\|_{L_m(\R^d)}\,dr
\\
&\lesssim \int_{(k+4)/n}^tn^{-(1+\alpha)/2}(\kappa_n(r)-s)^{-1/2}s^{-d/(2m)}
%\|f\|_{W^\alpha_m(\R^d)}
\,dr
\\
&\lesssim n^{-(1+\alpha)/2}(t-s)^{1/2}s^{-d/(2m)}
\\
&  \lesssim  n^{-(1+\alpha)/2+\eps}|t-s|^{1/2+\eps}S^{-d/(2m)}.
\end{equs}
Next, we deal with the term $I_1$. If $\alpha \leq  d/m$, we use $\|f_r\|_{\bB}\leq 1$, $r\in[0,1]$, in a trivial way to get
\begin{equs}
I_1 \leq 8 n^{-1}%\|f\|_{L_\infty(\R^d)}
&\lesssim n^{-(1+\alpha)/2+\eps}|t-s|^{1/2+\eps}S^{-\alpha/2}%\|f\|_{L_\infty(\R^d)}
\\
& \leq  n^{-(1+\alpha)/2+\eps}|t-s|^{1/2+\eps}S^{-d/(2m)},%\|f\|_{L_\infty(\R^d)}
\end{equs}
where we also used $n^{-1}\leq|t-s|\leq S$. If $ \alpha> d/m$, then we use that $[f_r]_{C^{\alpha- d/m}(\R^d)}\lesssim [f_r]_{\dot{W}^\alpha_m(\R^d)} $ (see Remark \ref{rem:choice-of-rep}) and we see that 
\begin{equs}
I_1 & \lesssim n^{-1-\alpha/2+ d/(2m)}\|f\|_{\bB([0,1],C^{\alpha-d/m})}
\\
&\leq  n^{-1-\alpha/2+ d/(2m)}
\\
&\leq  n^{-(1+\alpha)/2+\eps}|t-s|^{1/2-d/(2m)+\eps}%\|f\|_{\bB([0,T],W^\alpha_m(\R^d))}
\\
& \leq  n^{-(1+\alpha)/2+\eps}|t-s|^{1/2+\eps}S^{-d/(2m)},%\|f\|_{W^\alpha_m}
\end{equs}
where we have used that  $n^{-1}\leq|t-s|\leq S$ and that $d/m< \alpha < 1$. 
Consequently, for $t \in [(k+4)/n, 1]$, we have shown that 
\begin{equs}
\tilde{A}_{s,t}\lesssim   n^{-(1+\alpha)/2+\eps}|t-s|^{1/2+\eps}S^{-d/(2m)}.
%(\|f\|_{L_\infty}+\|f\|_{W^\alpha_m}),
\end{equs}
We now move to the  case $t\in(s,(k+4)/n)$.  We have two subcases, either  $k\geq 1$ or $k=0$. 
Suppose first that $k \geq 1$. We have 
\begin{equs}   
\tilde{A}_{s,t} &=   \int_s^{t \wedge \frac{k+1}{n}} \|\E^s\big( f_r(B_r)-f_r(B_{\kappa_n(r)}) \big)\|_{L_m(\Omega)}  \, dr  
\\&\qquad+ \int_{ t \wedge {\frac{k+1}{n}}}^t \|\E^s\big( f_r(B_r)-f_r(B_{\kappa_n(r)}) \big)\|_{L_m(\Omega)}  \, dr. \label{eq:decom-Ast}
\end{equs}
Next, we see that 
\begin{equs}
\int_s^{t \wedge \frac{k+1}{n}} \|\E^s\big( f_r(B_r)-f_r(B_{\kappa_n(r)}) \big) \|_{L_m(\Omega)} \, dr=  \int_s^{t \wedge \frac{k+1}{n}} \|\cP_{r-s}f_r(B_s)-f_r(B_{k/n}) \|_{L_m(\Omega)} \, dr
 \\
 \leq  \int_s^{t \wedge \frac{k+1}{n}} \|\cP_{r-s}f_r(B_s)-f_r(B_s) \|_{L_m(\Omega)}\, dr+ ({t \wedge \frac{k+1}{n}}-s) \|f_r(B_s)-f_r(B_{k/n})\|_{L_m(\Omega)}.
\end{equs}
For the first term at the right hand side above  we have by  \eqref{eq:Lp-bound-1} and Proposition \ref{pro:time-reg} 
\begin{equs}
\int_s^{t \wedge \frac{k+1}{n}}\|\cP_{r-s}f_r(B_s)-f_r(B_{s})\|_{L_m(\Omega)}  \, dr 
& \lesssim  \int_s^{t \wedge \frac{k+1}{n}} s^{-d/(2m)}   \|\cP_{r-s}f_r-f_r \|_{L_m(\R^d)}  \, dr
\\
& \lesssim  \int_s^{t \wedge \frac{k+1}{n}} s^{-d/(2m)} |r-s|^{\alpha/2}\,dr % \|f \|_{W^\alpha_m(\R^d)}  
\\
&  \lesssim  |t-s|^{1+(\alpha/2)} s^{-d/(2m)} %\|f \|_{W^\alpha_m(\R^d)}
\\
& \lesssim  n^{-(1+\alpha)/2+\eps}|t-s|^{1/2+\eps} S^{-d/(2m)}, 
\end{equs}
where we have used  that $|t-s| \leq 4n^{-1}$ in the last inequality.
For the second term, by \eqref{eq:Lp-bound-2}
\begin{equs}
\Big({t \wedge \frac{k+1}{n}}-s\Big) \|f_r(B_s)-f_r(B_{k/n}) \|_{L_m(\Omega)}  & \lesssim  |t-s| n^{-\alpha/2}  (k/n)^{-d/(2m)} %\|f \|_{W^\alpha_m(\R^d)}
\\
& \lesssim  n^{-(1+\alpha)/2+\eps}|t-s|^{1/2+\eps} s^{-d/(2m)} %\|f \|_{W^\alpha_m(\R^d)}
\\
& \lesssim  n^{-(1+\alpha)/2+\eps}|t-s|^{1/2+\eps} S^{-d/(2m)}, %\|f \|_{W^\alpha_m(\R^d)},
\end{equs}
where we have used that $|t-s| \leq 4 n^{-1}$ and $ k/n \leq s \leq 4k/n$. 
Hence, 
\begin{equs}    
\int_s^{t \wedge \frac{k+1}{n}} \| \E^s \big( f_r(B_r)-f_r(B_{\kappa_n(r)}) \big)\|_{L_m(\Omega)}   \, dr\lesssim  n^{-(1+\alpha)/2+\eps}|t-s|^{1/2+\eps} S^{-d/(2m)}. %\|f \|_{W^\alpha_m(\R^d)}.
 \label{eq:Ast-term-1}
 \end{equs}
 For the second term at the right hand side of \eqref{eq:decom-Ast} we can assume that $t > (k+1)/n$ and then  we have 
 by \eqref{eq:Lp-bound-2}
\begin{equs}
 \int_{ {\frac{k+1}{n}}}^t \| \E^s\big(  f_r(B_r)-f_r(B_{\kappa_n(r)}) \big)\|_{L_m(\Omega)} \, dr&  \leq  \int_{ {\frac{k+1}{n}}}^t \|  f_r(B_r)-f_r(B_{\kappa_n(r)})\|_{L_m(\Omega)} \, dr 
\\
& \lesssim  \int_{ {\frac{k+1}{n}}}^t |r-\kappa_n(r)|^{\alpha/2} (\kappa_n(r))^{-d/(2m)}\,dr %\| f\|_{W^\alpha_m(\R^d)}
\\
& \lesssim  n^{-(1+\alpha)/2+\eps}|t-s|^{1/2+\eps} S^{-d/(2m)}, %\|f \|_{W^\alpha_m(\R^d)}
\end{equs} 
where we have used that $|t-s| \leq 4 n^{-1}$ and $ S \leq s \leq (k+1)/n \leq \kappa_n(r)$ for $r \geq (k+1)/n$. Consequently, for the case $t \in (s, (k+4)/n)$ and $k \geq 1$ we get 
\begin{equs}
\tilde{A}_{s,t}
\lesssim  n^{-(1+\alpha)/2+\eps}|t-s|^{1/2+\eps} S^{-d/(2m)}.% \|f \|_{W^\alpha_m(\R^d)}.
\end{equs}
 Finally, for the case  $t \in (s, (k+4)/n)$ and $k =0 $ we have the following. Assume first that $\alpha< d/m$.  From  $ \|f_r\|_{\bB(\R^d)}\leq 1$, $r\in[0,1]$, we have the trivial estimate
 \begin{equ}
\tilde{A}_{s,t} \leq 2 |t-s|.%\|f\|_{L_\infty(\R^d)}.
\end{equ}
By definition of $[S,1]^*_{\leq}$ we have  $|t-s|\leq S \leq s  \leq n^{-1}$, and so one also has 
\begin{equ}
|t-s|\leq n^{-1/2+\eps}|t-s|^{1/2+\eps}\leq n^{-(1+\alpha)/2+\eps}|t-s|^{1/2+\eps}S^{-\alpha/2}\leq  n^{-(1+\alpha)/2+\eps}|t-s|^{1/2+\eps}S^{-d/(2m)} .
\end{equ}
Consequently,
 \begin{equ}
\tilde{A}_{s,t}\leq 2 n^{-(1+\alpha)/2+\eps}|t-s|^{1/2+\eps}S^{-d/(2m)}.%\| f\|_{L_\infty}.
\end{equ}
If $\alpha>d/m$, we can use that $[f_r]_{C^{\alpha- d/m}(\R^d)}\lesssim [f_r]_{\dot{W}^\alpha_m(\R^d)} $ as before. We then get
 \begin{equs}
\tilde{A}_{s,t} & \lesssim n^{-\alpha/2+d/(2m)}|t-s| \sup_{r\in[0,1]}[ f_r]_{C^{\alpha-d/m}(\R^d)}\leq n^{-\alpha/2+d/(2m)}|t-s|.
\end{equs}
Since $S \leq n^{-1}$ we have that $n^{d/(2m)} \leq S^{-d/(2m)}$ which combined with $|t-s| \leq n^{-1}$ gives 
 \begin{equs}
\tilde{A}_{s,t} & \lesssim n^{-(1+\alpha)/2+\eps}|t-s|^{1/2+\eps}S^{-d/(2m)}.%\| f\|_{W^\alpha_m}.
\end{equs}
By combining all of the cases above, we can conclude that the bound 
\begin{equs}      \label{eq:est-tildeA}
\tilde{A}_{s,t} & \lesssim n^{-(1+\alpha)/2+\eps}|t-s|^{1/2+\eps}S^{-d/(2m)}%(\| f\|_{W^\alpha_m}+ \|f\|_{L_\infty})
\end{equs}
holds for all $(s,t)\in[S,1]_\leq^\ast$, which in particular implies \eqref{eq:SSL 1} for $A_{s,t}$ with $C_1=Nn^{-(1+\alpha)/2+\eps}$ (see \eqref{eq:A-dom-B}). 

We now move to \eqref{eq:SSL2}. We will show that it holds with $\eps_2= (\alpha'-1+2\eps)/2>0$. 
Let $(s, t) \in [S,1]^*_{\leq}$ and let $u \in (s,t)$. We have 
\begin{equs}
\E^s \delta  A_{s, u, t} = \int_u^t \E^s \Big( \big( g_r(B_s)-g_r(B_u) \big) \E^u \big( f_r(B_r)-f_r(B_{\kappa_n(r)}) \big) \Big) \, dr,
\end{equs}
which implies that
\begin{equs}
\| \E^s \delta  A_{s, u, t}\|_{L_p(\Omega)} & \leq  \int_u^t \| g_r(B_s)-g_r(B_u) \|_{L_{pm/(m-p)}(\Omega)} \| \E^u \big( f_r(B_r)-f_r(B_{\kappa_n(r)}) \big) \|_{L_m(\Omega)} \, dr
\\
& \lesssim |s-u|^{\alpha'/2} \int_u^t  \| \E^u \big( f_r(B_r)-f_r(B_{\kappa_n(r)}) \big) \|_{L_m(\Omega)} \, dr
\\
& \lesssim   |s-u|^{\alpha'/2}  \tilde{A}_{u, t} 
\\
& \lesssim |t-s|^{\alpha'/2} n^{-(1+\alpha)/2+\eps}|t-s|^{1/2+\eps}S^{-d/(2m)}
\\
&\lesssim n^{-(1+\alpha)/2+\eps}|t-s|^{1+\eps_2}S^{-d/(2m)},
\end{equs}
where the first inequality follows from H\"older inequality, for the last inequality we have used \eqref{eq:est-tildeA} (notice that $(u, t) \in [S,1]^*_{\leq}$). 

Summarising, we have shown that $A_{s,t}$ satisfies the conditions of Lemma \ref{lem:wSSL} with $\delta_1= \delta_2=d/(2m)$, $\eps_1=\eps$, $\eps_2=(\alpha'-1+2\eps)/2$, and $C_1=C_2=  N n^{-(1+\alpha)/2+\eps}$.    Consequently, by  Lemma \ref{lem:wSSL}, there exists a unique process $\cA_t$ satisfying \eqref{eq:SSL3} and \eqref{eq:SSL4}. Let us now set 
$$
\bar{\cA}_t:=\int_0^t g_r(B_r) \big( f_r(B_r)-f_r(B_{\kappa_n(r)}) \big) \, dr.
$$
Since $\|g_r\|_{C^{\alpha'}(\R^d)}, \|f_r\|_{\bB(\R^d)} \leq 1$, for $~r \in [0,1]$, we have the trivial estimates 
\begin{equs}
\| \bar{\cA}_{s,t} - A_{s,t} \|_{L_p(\Omega)} & \leq  4|t-s| \leq 4|t-s|^{1/2+\eps},
\\
\| \E^s( \bar{\cA}_{s,t} - A_{s,t} )\|_{L_p(\Omega)} & \leq 2 |t-s|^{1+\alpha'/2} \leq 2 |t-s|^{1+\eps_2},
\end{equs}
which show that $\bar{\cA}$ satisfies \eqref{eq:SSL3}-\eqref{eq:SSL4}, and therefore $\cA=\bar\cA$.
%Since $\cA_t$ is the only process with this property, we have 
%$$
%\cA_t= \int_0^t g_r(B_r) \big( f_r(B_r)-f_r(B_{\kappa_n(r)}) \big) \, dr. 
%$$
The desired inequality  \eqref{eq:sup-quadr-add} now follows from 
\eqref{eq:SSL-conclusion}.
\end{proof} 
\begin{lemma}     
Let $\alpha \in (0,1)$, $p \geq 2$, $\eps \in (0, 1/2)$, $\alpha' \in ( 1-2 \eps, 1)$, and $m\geq d$ such that $m > p$. Then, 
for  all  $f\in \bB([0,1],\dot{W}^{\alpha}_m(\R^d))\cap \bB([0,1]\times\R^d)$ and $g \in \bB([0,1],C^{\alpha'}(\R^d))$  one has the bound
\begin{equs}\label{DKBound-frac}
\big\|\int_0^{\cdot} g_r(B_r)\big( & f_r(B_r)-f_r(B_{\kappa_n(r)})\big)\, dr\big\|_{L_p(\Omega; \bB[0,1])}
\\&
\leq N \|g\|_{\bB([0,1],C^{\alpha'}(\R^d)} \big(\sup_{r \in [0,1]}[f_r]_{\dot{W}^{\alpha}_m(\R^d)}+\|f\|_{\bB([0,1]\times\R^d)}\big) n^{-(1+\alpha)/2+\eps},
\end{equs} 
where 
 $N$ is a constant depending only on 
$ d,p,\alpha, m$ and $\eps$.  
\end{lemma}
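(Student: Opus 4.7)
The plan is to upgrade the pointwise $L_p(\Omega)$-bound of \cref{lem:(ii)} to the sup-in-time estimate claimed here, mirroring the Kolmogorov-based derivation of \cref{cor:Girsanov-multiplicative} from \cref{lem:(ii) mult}. The one new feature, compared with the multiplicative case, is the singular prefactor $S^{-d/(2m)}$ in \eqref{eq:sup-quadr-add}, which will be absorbed by a dyadic-in-time decomposition.

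Set $Y_t:=\int_0^t g_r(B_r)(f_r(B_r)-f_r(B_{\kappa_n(r)}))\,dr$ and split $[0,1]=[0,1/n]\cup[1/n,1]$. On $[0,1/n]$ the trivial pointwise bound $|Y_t|\leq 2\|g\|_\bB\|f\|_\bB\,t\leq 2/n$ already has the desired order, since $\alpha<1$ implies $n^{-1}\leq n^{-(1+\alpha)/2+\eps}$. On $[1/n,1]$ partition into dyadic pieces $I_k$ of length $\sim 2^{-k}$ and left endpoint $\gtrsim 2^{-k}$, arranged so that $|t-s|\leq s/2$ for all $s,t\in I_k$ (for instance by bisecting each annulus $[2^{-k},2^{-k+1}]$). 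On each $I_k$, \cref{lem:(ii)} applied with $S\sim 2^{-k}$, in $L_{p'}$ for some $p'\in(p,m)$ (which exists by the strict convention $m>p$ in the remark after \cref{thm:additive}), and with exponent $\eps_0\in((1-\alpha')/2,\eps)$ chosen to leave room for Kolmogorov losses, yields
\begin{equ}
\|Y_t-Y_s\|_{L_{p'}(\Omega)}\lesssim n^{-(1+\alpha)/2+\eps_0}|t-s|^{1/2+\eps_0}\,2^{kd/(2m)},\qquad s,t\in I_k.
\end{equ}

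Kolmogorov's continuity criterion on $I_k$ in $L_{p'}$ (valid since $(1/2+\eps_0)p'>1$) then produces
\begin{equ}
\Big\|\sup_{s,t\in I_k}|Y_t-Y_s|\Big\|_{L_{p'}(\Omega)}\lesssim n^{-(1+\alpha)/2+\eps_0}\,2^{-k\mu},\qquad \mu:=1/2+\eps_0-1/p'-d/(2m)-\delta,
\end{equ}
for arbitrarily small $\delta>0$. The triangle inequality $\sup_{t\in[1/n,1]}|Y_t-Y_{1/n}|\leq\sum_k\sup_{s,t\in I_k}|Y_t-Y_s|$, geometric summation in $k$, and Jensen (to pass back from $L_{p'}$ to $L_p$) then conclude.

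The main technical point is to secure $\mu>0$, so that the geometric series $\sum_k 2^{-k\mu}$ is convergent and one recovers exactly the exponent $\eps$ rather than something worse. Taking $p'$ close to $m$ makes $1/p'+d/(2m)$ close to $(d+2)/(2m)$; since $m$ can moreover be enlarged at the mild cost of a slightly smaller $\alpha$ (cf.\ \cref{lem.triv.em} and the remark after \cref{thm:additive}), this quantity can be brought below $1/2+\eps_0$ for $\eps_0$ arbitrarily close to $\eps$, ensuring $\mu>0$. The small gap $\eps-\eps_0$ is then absorbed into the multiplicative constant, producing the bound of the claimed order.
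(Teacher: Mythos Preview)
Your Kolmogorov-plus-dyadic strategy is a genuinely different route from the paper's. The paper first establishes a pointwise-in-$t$ bound $\|Y_t\|_{L_q(\Omega)}\lesssim n^{-(1+\alpha)/2+\eps}$ for each fixed $t\in[0,1]$ and some $q\in(p,m)$, by a dyadic decomposition of $[0,t]$ together with \cref{lem:(ii)}; it then upgrades to the supremum by proving the same bound for $\|Y_\tau\|_{L_q(\Omega)}$ at every bounded stopping time $\tau$ (restarting the Brownian motion via its Markov property) and invoking Lenglart's inequality to pass from $L_q$ at stopping times to $L_p$ of the running supremum. Your approach bypasses stopping times and Lenglart entirely, which is conceptually cleaner.

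However, the argument as written has a gap. Your formula $\mu=1/2+\eps_0-1/p'-d/(2m)-\delta$ overestimates the Kolmogorov loss: if $\|Y_t-Y_s\|_{L_{p'}}\leq C|t-s|^\beta$ on an interval $I$ of length $L$ with $\beta p'>1$, then rescaling $I$ to $[0,1]$ and applying the standard Kolmogorov bound on the $C^\gamma$-seminorm (any fixed $\gamma<\beta-1/p'$) yields $\|\sup_{s,t\in I}|Y_t-Y_s|\|_{L_{p'}}\lesssim CL^\beta$ with \emph{no} $1/p'$ penalty in the exponent. The correct value is therefore $\mu=1/2+\eps_0-d/(2m)$, which is at least $\eps_0>0$ directly from the hypothesis $m\geq d$, and the geometric sum in $k$ converges without any further manipulation. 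The workaround you invoke---enlarging $m$ via \cref{lem.triv.em}---is not only unnecessary but actually fails as stated: passing from $(\alpha,m)$ to $(\alpha\theta,m/\theta)$ costs $\alpha(1-\theta)/2$ in the rate, and for $m$ close to $\max(d,2)$ (say $d=2$, $m$ just above $2$, $\alpha$ close to $1$, $\eps$ small) the requirements ``$\theta$ close to $1$'' (so that the rate loss stays below $\eps-\eps_0$) and ``$\theta$ small enough that $(d+2)\theta/(2m)<1/2+\eps_0$'' are incompatible. Drop the spurious $-1/p'-\delta$ from $\mu$ and delete the embedding step; the argument then closes.
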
 

\begin{proof}
As before, we may and will assume
$$\|g\|_{\bB([0,1],C^{\alpha'}(\R^d))}=\sup_{r \in [0,1]}[f_r]_{\dot{W}^{\alpha}_m(\R^d)}+\|f\|_{\bB([0,1]\times\R^d)}=1.$$
We will first show that for $q \in (p, m)$ and  all $t \in [0,1]$ we have 
\begin{equs}\label{eq:0-to-t}
\big\|\int_0^t g_r(B_r) \big(f_r(B_r)-f_r(B_{\kappa_n(r)})\big)\, dr\big\|_{L_q(\Omega)}
\lesssim %\big(\|f\|_{W^{\alpha}_m}+\|f\|_{L_\infty}\big)
n^{-(1+\alpha)/2+\eps}.
\end{equs}  
For any $\ell\in\N$, by Lemma \ref{lem:(ii)} we get
\begin{equs}
 \big\| &\int_{t2^{-\ell}}^t g_r(B_r)   \big(f_r(B_r)-f_r(B_{\kappa_n(r)})\big)\, dr\big\|_{L_q(\Omega)}
\\
 & \leq   \sum_{k=0}^{\ell-1} \big\| \int_{t2^{k-\ell}}^{t2^{k-\ell+1}}  g_r(B_r)  \big(f_r(B_r)-f_r(B_{\kappa_n(r)})\big)\, dr\big\|_{L_q(\Omega)}
\\
 &\lesssim %\big(\|f\|_{W^{\alpha}_m}+\|f\|_{L_\infty}\big)
 n^{-(1+\alpha)/2+\eps}  \sum_{k=0}^{\ell-1} (t2^{k-\ell})^{1/2+\eps} ( t 2^{k-\ell})^{-d/(2m)}.
\end{equs}
Then, notice that since $d \leq m$, we have 
\begin{equs}
\sum_{k=0}^{\ell-1} (t2^{k-\ell})^{1/2+\eps} ( t 2^{k-\ell})^{-d/(2m)} \leq   2^{-\ell \eps} \sum_{k=0}^{\ell-1}  2^{k\eps} = 2^{-\ell\eps} \frac{2^{\ell\eps}-1}{2^\eps -1} \leq \frac{1}{2^\eps -1} .
\end{equs}
Hence, \eqref{eq:0-to-t} follows from Fatou's lemma by letting $\ell \to \infty$.  Since \eqref{eq:0-to-t} holds for all $t \in [0,1]$,  we also get that for all $t \in [0,1]$ 
\begin{equs}\label{eq:t-to-T}
\big\|\int_t^1 g_r(B_r) \big(f_r(B_r)-f_r(B_{\kappa_n(r)})\big)\, dr\big\|_{L_q(\Omega)}
\lesssim %\big(\|f\|_{W^{\alpha}_m}+\|f\|_{L_\infty}\big)
n^{-(1+\alpha)/2+\eps}.
\end{equs}
Let $\tau$ be a stopping time bounded by $1$, taking only finitely many values $t_1, t_2, ..., t_k$. We have 
\begin{multline}    
\E \ \Big| \int_\tau ^1  g_r(B_r)  \big(f_r(B_r)-f_r(B_{\kappa_n(r)})\big)\, dr \Big|^q 
\\=  \sum_{i=1}^k \E \Big( \bone_{\tau= t_i} \ \Big| \int_{t_i}^1 g_r(B_r)  \big(f_r(B_r)-f_r(B_{\kappa_n(r)})\big)\, dr \Big|^q \Big).
 \label{eq:values-of-tau}
\end{multline}
Define $\kappa^+_n(t_i) := \kappa_n(t_i) +1/n$.
For each of the summands on the right-hand side, we have 
\begin{equs}       
 \E \Big(& \bone_{\tau= t_i} \ \Big| \int_{t_i} ^1 g_r(B_r) \big(f_r(B_r)-f_r(B_{\kappa_n(r)})\big)\, dr \Big|^q \Big)
\\               
 &\lesssim   \E \Big( \bone_{\tau=t_i } \ \Big| \int_{ \kappa^+_n(t_i)} ^1  g_r(B_r)\big(f_r(B_r)-f_r(B_{\kappa_n(r)})\big)\, dr \Big|^q \Big) + n^{-q}  \mathbf{P}(  \tau=t_i ),
  \label{eq:term-ti}
\end{equs}
using $\|g_r\|_{\bB}, \|f_r\|_{\bB}\leq 1$, $r\in[0,1]$,  in a trivial way.
If $1-t_i \leq 3n^{-1}$, then we have  similarly the trivial estimate
\begin{equs}        \label{eq:case-T-ti-small}
\E \Big( \bone_{\tau=t_i } \ \Big| \int_{ \kappa^+_n(t_i)} ^1 g_r(B_r) \big(f_r(B_r)-f_r(B_{\kappa_n(r)})\big)\, dr \Big|^q \Big)  \lesssim  n^{-q} %\| f\|_{\bB}^m
\mathbf{P}(  \tau=t_i ).
\end{equs}
If $1-t_i > 3n^{-1}$ we can write 
\begin{equs}      
\E \Big( &\bone_{\tau=t_i } \ \Big| \int_{ \kappa^+_n(t_i)} ^1 g_r(B_r) \big(f_r(B_r)-f_r(B_{\kappa_n(r)})\big)\, dr \Big|^q \Big)
=  \E \big( \bone_{\tau=t_i } G(B_{\kappa^+_n(t_i)}) \big) ,          \label{eq:case-T-ti-large}
\end{equs}
where
\begin{equs}
G(x)&: =  \E  \Big| \int_{ \kappa^+_n(t_i)} ^1  g_r(B_r-B_{\kappa^+_n(t_i)}+x) \big(f_r(B_r-B_{\kappa^+_n(t_i)}+x)-f_r(B_{\kappa_n(r)}-B_{\kappa^+_n(t_i)}+x) \big) \, dr \Big|^q
\\
&=  \E  \Big| \int_0^ {1- \kappa^+_n(t_i)}  g_{r+\kappa_n^+(t_i)}(B_r+x)\big(f_{r+\kappa_n^+(t_i)}(B_r+x)-f_{r+\kappa_n^+(t_i)}(B_{\kappa_n(r)}+x) \big) \, dr \Big|^q.
\end{equs} 
Hence, by \eqref{eq:0-to-t}, we conclude that in the case $1-t_i> 3n^{-1}$  we have that 
\begin{equs}      
 \E \Big( \bone_{\tau=t_i } \ \Big| \int_{ \kappa^+_n(t_i)} ^1 g_r(B_r)  \big(f_r(B_r)-f_r(B_{\kappa_n(r)})\big)\, dr \Big|^q \Big) & = \E \big( \bone_{\tau=t_i } G(B_{\kappa^+_n(t_i)}) \big) 
\\
& \lesssim % \Big( \big(\|f\|_{W^{\alpha}_m}+\|f\|_{L_\infty}\big) 
\big(n^{-(1+\alpha)/2+\eps} \big)^q  \mathbf{P}( \tau=t_i).
\end{equs}
Putting the above inequality together with \eqref{eq:case-T-ti-small}, \eqref{eq:term-ti}, and \eqref{eq:values-of-tau}, gives 
\begin{equs}
\big\|\int_\tau^1 g_r(B_r)  \big(f_r(B_r)-f_r(B_{\kappa_n(r)})\big)\, dr\big\|_{L_q(\Omega)}
\lesssim %\big(\|f\|_{W^{\alpha}_m}+\|f\|_{L_\infty}\big)
n^{-(1+\alpha)/2+\eps}.\label{eq:ref here}
\end{equs}  
Recall that $\tau \leq 1$ was a  simple stopping time. 
It is well known that an arbitrary stopping time can be approximated by simple ones (for example, one can take $\tau_\ell=\kappa_\ell^+(\tau)$ and let $\ell\to\infty$).
Therefore, a standard approximation argument shows that  \eqref{eq:ref here} holds for all stopping times which are bounded by $1$.
Moreover, the above combined with \eqref{eq:0-to-t} implies that for all such stopping times $\tau$, we have 
\begin{equs}
\big\|\int_0^\tau g_r(B_r)   \big(f_r(B_r)-f_r(B_{\kappa_n(r)})\big)\, dr\big\|_{L_q(\Omega)}
\lesssim %\big(\|f\|_{W^{\alpha}_m}+\|f\|_{L_\infty}\big)
n^{-(1+\alpha)/2+\eps}.
\end{equs} 
The claimed bound \eqref{DKBound-frac} then follows by Lenglart's inequality (see, e.g., \cite[Proposition~IV.4.7]{RY}). 
\end{proof}

\begin{corollary}\label{cor:Girsanov-additive}
Let $\alpha \in (0,1)$, $p \geq 2$, $\eps \in (0, 1/2)$, $\alpha' \in ( 1-2 \eps, 1)$, and $m\geq d$ such that $m > p$.   
Let \cref{asn:additive-basic} hold and let $X^n$ be the solution of \eqref{eq:main-EM}.
Then, 
for  all $f\in \bB([0,1],\dot{W}^{\alpha}_m(\R^d))\cap \bB([0,1]\times\R^d)$, $g \in \bB([0,1],C^{\alpha'}(\R^d))$,  and $n\in\N$ one has the bound
\begin{equs}\label{DKBound-frac-Girsanov}
\big\|\int_0^{\cdot} g_r(X^n_r)   \big( & f_r(X^n_r)-f_r(X^n_{\kappa_n(r)})\big)\, dr\big\|_{L_p(\Omega; \bB[0,1])}
\\
&
\leq N\| g\|_{ \bB([0,1],C^{\alpha'}(\R^d))}\big(\sup_{r \in [0,1]}[f_r]_{\dot{W}^{\alpha}_m(\R^d)}+\|f\|_{\bB([0,1]\times\R^d)}\big) n^{-(1+\alpha)/2+\eps},
\end{equs} 
 where $N$ is a constant depending only on 
$ d,p,\alpha, m$ and $\eps$.  
\end{corollary}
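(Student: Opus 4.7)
The plan is to mimic the Girsanov-based transfer used in the proof of \cref{cor:Girsanov-multiplicative}, replacing the driftless scheme $\bar X^n$ there by a translated Brownian motion (possible since $\sigma=I$ here). Concretely, I would set
\[
\rho := \exp\Bigl(-\int_0^1 b(X^n_{\kappa_n(r)})\, dB_r - \tfrac{1}{2}\int_0^1 \big|b(X^n_{\kappa_n(r)})\big|^2\, dr\Bigr),
\]
define a new probability measure by $d\tilde\bP := \rho\,d\bP$, and invoke Girsanov's theorem: since $b$ is bounded, $\rho$ is a true martingale and $\E\rho^s$ is finite for every $s\in\R$ with a bound depending only on $\|b\|_{\bB}$ and $s$. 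The process $\tilde B_t := B_t + \int_0^t b(X^n_{\kappa_n(r)})\,dr$ is then a $\tilde\bP$-Brownian motion, so that $X^n_t = x_0 + \tilde B_t$ and the law of $X^n$ under $\tilde\bP$ coincides with the law of $x_0+B$ under $\bP$.

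Let $H$ denote the $\bB([0,1])$-norm appearing on the left-hand side of \eqref{DKBound-frac-Girsanov}. Since $m > p$ is assumed strictly, I would pick $q\in(1, m/p)$ with conjugate exponent $q'$, and apply H\"older's inequality in the form
\[
\E H^p \;=\; \tilde\E\bigl(H^p \rho^{-1}\bigr) \;\leq\; \bigl(\tilde\E H^{pq}\bigr)^{1/q}\bigl(\tilde\E \rho^{-q'}\bigr)^{1/q'}.
\]
The second factor is controlled by a constant depending only on $\|b\|_{\bB}$ and $q'$ via the identity $\tilde\E \rho^{-q'} = \E\rho^{1-q'}$ together with the moment bound on $\rho$. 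For the first factor, I would translate by $x_0$ and set $\tilde f_r(\cdot) := f_r(\cdot+x_0)$, $\tilde g_r(\cdot) := g_r(\cdot+x_0)$; the law identification under $\tilde\bP$ then reduces $\tilde\E H^{pq}$ to the left-hand side of \eqref{DKBound-frac} applied to $\tilde f$ and $\tilde g$ with exponent $pq < m$. The key observation is that both the homogeneous Sobolev seminorm $[\cdot]_{\dot W^\alpha_m(\R^d)}$ and the H\"older norm $\|\cdot\|_{C^{\alpha'}(\R^d)}$ are translation-invariant, so after applying the preceding lemma the bound comes out in terms of the original seminorms of $f$ and $g$.

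There is no serious obstacle in the above, only a mild bookkeeping point: the crude Cauchy--Schwarz step employed in \cref{cor:Girsanov-multiplicative} would require $2p < m$, which is not assumed here, and it is precisely to accommodate the weaker hypothesis $m > p$ that one needs the H\"older step with $q$ strictly between $1$ and $m/p$ (so the strictness of $m > p$ is essential). Once these two estimates are combined, the claimed bound \eqref{DKBound-frac-Girsanov} follows.
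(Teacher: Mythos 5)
Your proposal is correct and follows essentially the same Girsanov route the paper indicates (the paper simply says the proof ``works just like'' Corollary~\ref{cor:Girsanov-multiplicative} and omits details). You correctly spot a point the paper's terse reference glosses over: copying the Cauchy--Schwarz step from Corollary~\ref{cor:Girsanov-multiplicative} verbatim would require $m>2p$, whereas only $m>p$ is assumed, so the general H\"older step with $q\in(1,m/p)$ (together with the translation by $x_0$ and the translation invariance of $[\cdot]_{\dot W^\alpha_m}$ and $\|\cdot\|_{C^{\alpha'}}$) is exactly the right fix.
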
  
The proof is a simple application of Girsanov's theorem and works just like the proof of Corollary \ref{cor:Girsanov-multiplicative}, so we omit repeating the details.

\section{An intermediate stability estimate}\label{sec:pde}
In this section we consider two `approximating solutions' to the main SDE \eqref{eq:main}. More precisely, we assume that we are given adapted continuous processes $X,\bar X,Y,\bar Y$, all of them with initial condition $x_0$, such that
\begin{equs}
dX_t&=b(\bar X_t)\,dt+\sigma(\bar X_t)\,dB_t,
\\
dY_t&=b(\bar Y_t)\,dt+\sigma(\bar Y_t)\,dB_t,
\end{equs}
and such that the laws of $X_t$ and $Y_t$ are absolutely continuous with respect to the Lebesgue measure for all $t>0$. 
We furthermore denote $\hat X=X-\bar X$, $\hat Y=Y-\bar Y$. To relate with Euler--Maruyama scheme \eqref{eq:main-EM}, one may think of $\bar X,\bar Y$ respectively as $X$ and $X^n_{\kappa_n}$.
This kind of reformulation of the error analysis
is inspired by \cite{KrylovSimple,MR1119837,MR1617049}, and in some vague sense, replaces the `regularisation lemma' step from \cite{ButDarGEr}.
Since the coefficients $b$ and $\sigma$ are bounded, we have that there exists $N$ depending only on $d$, $\| b\|_{\bB}, \|\sigma\|_{\bB}$,  such that  
\begin{equs} \label{eq:exponential-bounds-new}
\E  \exp \big( \|X_\cdot\|_{\bB([0,1])} \big) \leq N \exp( x_0) , \qquad  \E \exp \big( \|Y_\cdot\|_{\bB([0,1])} \big) \leq N \exp( x_0).
\end{equs}
Estimating the difference of the drifts is done via a PDE method, similarly to, e.g., \cite{PT, DG, NeuSz, Bao2020}.
First, for $K\in(0,\infty)$, we introduce the truncation $b_K=b\bone_{|x|\leq K}$.
The reason for this truncation is to enforce the right-hand side of the PDE below to be in $L_p(\R^d)$ with $p<\infty$, since Schauder estimates fail in the endpoint $p=\infty$ case.
For $ \ell \in \{1,\ldots,d\}$, and $\theta>0$ to be chosen later, let us consider the equation 
\begin{equs} 
\begin{aligned} \label{eq:PDE-trun-new}
\D_t u^{\ell}+\frac{1}{2}(\sigma \sigma^*)\cdot \nabla^2 u^{\ell} + b\cdot\nabla u^{\ell} -\theta u^{\ell} &  = b^{\ell}_K, \qquad && \text{in} \  (0,1) \times \R^d
\\
u^\ell&=0 \qquad && \text{on} \ \{1\} \ \times \R^d.
\end{aligned}
\end{equs}
By $u$ we denote the $\R^d$-valued function whose coordinates are $u^1,\ldots, u^d$.
Note that by a change of time variable $t\leftrightarrow 1-t$, the estimates in \cref{lem:PDE-estimates} also apply for the backward equation \eqref{eq:PDE-trun-new}.

For a function $f$ let us denote by $\mathcal{M}f$ its Hardy--Littlewood maximal function (see, e.g., \cite{Aalto} for a brief introduction), that is,
\begin{align}\label{def.maximalf}
	\mathcal{M} f(x) := \sup_{r>0} \frac{1}{|B_r(x)|} \int_{B_r(x)} f(y) \, dy , \qquad x \in \R^d.
\end{align}

Given the objects above and $p\geq 2$, introduce the increasing process
\begin{equs}
A_t = t+\int_0^t \big| \big(\mathcal{M} | \nabla ( \nabla u \sigma) | \big)(s,X_s) +   \big(\mathcal{M} | \nabla ( \nabla u \sigma) |\big) (s,Y_s) \big|^p  \, ds.\label{eq:def-A}
\end{equs}
\begin{lemma}\label{lem:kindofregularisation}
Assume the above setting and fix $p\geq 2$, $m\in\N$.
Then there exist constants $N$, $\theta$ depending only on $d,p,\|b\|_{\bB},$ and $\|\sigma\|_{\bB}$ (but not on $K$ and $m$)   such that
\begin{equation}\label{eq:reg-new}
\E\sup_{t\in[0,1]}|X_t-Y_t|^p
\leq N\big(\bP(A_1\geq \frac{m}{2N})\big)^{1/2}+N^m\cR,
\end{equation}
where
\begin{multline}\label{def.cR}
	\cR=\Big(e^{- K}+\bone_{\nabla\sigma\neq 0} \sum_{U=X,Y}\sup_{t\in[0,1]}\big(\E(|\hat U_t|^{2p})\big)^{1/2}\big(1+\E\int_0^1|\nabla^2 u(s,U_s)|^{2p}\,ds\big)^{1/2}
	\\
	+\sum_{U=X,Y}\E\sup_{t\in[0,1]}\Big|\int_0^t \big(b(U_s)-b(\bar U_s)\big)\,ds\Big|^p+\Big|\int_0^t \big(b(U_s)-b(\bar U_s)\big)\nabla u(s,U_s)\,ds\Big|^p\Big),
\end{multline}
and $u$ is a solution to \eqref{eq:PDE-trun-new}.
\end{lemma}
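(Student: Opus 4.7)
The overall plan is to combine a Zvonkin--It\^o--Tanaka transformation with the weighted Gronwall lemma (Lemma~\ref{lem:Gronwall-type}). First, fix $\theta$ large enough (depending only on $d,p,\|b\|_{\bB},\|\sigma\|_{\bB}$) so that \eqref{eq:u_x-estimate} applied to the time-reversed \eqref{eq:PDE-trun-new} gives $\|\nabla u\|_{\bB}\le 1/2$; crucially, the constant in \eqref{eq:u_x-estimate} depends on $\|b\|_{\bB}$ (not on $\|b_K\|_{\bB}$), hence this choice is uniform in $K$. Set $\Phi(t,x)=x-u(t,x)$, which is then bi-Lipschitz in $x$ with constants in $[1/2,3/2]$, so $|X_t-Y_t|\le 2|\Phi(t,X_t)-\Phi(t,Y_t)|$, and it suffices to bound $\Phi(t,X_t)-\Phi(t,Y_t)$.

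Applying It\^o's formula to $u(t,X_t)$ and using \eqref{eq:PDE-trun-new} at $x=X_t$ produces, after the standard cancellation,
\[
d\Phi(t,X_t)=\big\{[b(\bar X_t)-b(X_t)]+(b-b_K)(X_t)-\theta u(t,X_t)-e_X(t)\big\}\,dt+(I-\nabla u(t,X_t))\sigma(\bar X_t)\,dB_t,
\]
with the extra error $e_X(t)=\tfrac12[(\sigma\sigma^*)(\bar X_t)-(\sigma\sigma^*)(X_t)]\cdot\nabla^2u(t,X_t)+[b(\bar X_t)-b(X_t)]\cdot\nabla u(t,X_t)$, and analogously for $Y$. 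Subtracting, the drift of $\Phi(t,X_t)-\Phi(t,Y_t)$ splits into four buckets: (i) the ``occupation-time'' differences $[b(\bar X_s)-b(X_s)]-[b(\bar Y_s)-b(Y_s)]$ and their $\nabla u$-weighted counterpart coming from $e$, which match the last two summands in $\cR$; (ii) the truncation term $(b-b_K)(X_s)-(b-b_K)(Y_s)$, bounded pointwise by $\|b\|_{\bB}(\bone_{|X_s|>K}+\bone_{|Y_s|>K})$, whose $L_p$-contribution is $\lesssim e^{-K}$ by \eqref{eq:exponential-bounds-new}; (iii) $\theta[u(s,X_s)-u(s,Y_s)]$, bounded by a constant multiple of $|X_s-Y_s|$ that is absorbed into $dA_s$; (iv) the $(\sigma\sigma^*)$-part of $e$, bounded by $\|\nabla\sigma\|_{\bB}\|\sigma\|_{\bB}(|\hat X_s||\nabla^2u(s,X_s)|+|\hat Y_s||\nabla^2u(s,Y_s)|)$, which after Jensen and Cauchy--Schwarz produces precisely the term $(\E|\hat U_s|^{2p})^{1/2}(1+\E\int|\nabla^2u(s,U_s)|^{2p}ds)^{1/2}$ of \eqref{def.cR}, with the $\bone_{\nabla\sigma\neq 0}$ factor emerging since this whole contribution vanishes when $\sigma$ is constant.

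For the martingale coefficient I add and subtract $\nabla u(s,X_s)\sigma(X_s)$ (and its $Y$-analogue) to reduce to three kinds of differences: $\sigma(\bar U_s)-\sigma(U_s)$ bounded by $\|\nabla\sigma\|_{\bB}|\hat U_s|$; $\sigma(\bar X_s)-\sigma(\bar Y_s)$ bounded by $\|\nabla\sigma\|_{\bB}(|X_s-Y_s|+|\hat X_s|+|\hat Y_s|)$; and the difference $(\nabla u\cdot\sigma)(s,X_s)-(\nabla u\cdot\sigma)(s,Y_s)$. For the last piece, since $\nabla(\nabla u\cdot\sigma)=\nabla^2u\,\sigma+\nabla u\,\nabla\sigma\in L_p(\R^d)$ uniformly in $s$ by \eqref{eq:u_xx-estimate}, I apply the pointwise Hardy--Littlewood inequality $|f(x)-f(y)|\lesssim|x-y|(\mathcal M|\nabla f|(x)+\mathcal M|\nabla f|(y))$ to conclude that $|M_s|\lesssim |X_s-Y_s|(1+\mathcal M|\nabla(\nabla u\sigma)|(s,X_s)+\mathcal M|\nabla(\nabla u\sigma)|(s,Y_s))+|\hat X_s|+|\hat Y_s|$. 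This is exactly the integrand hard-coded in \eqref{eq:def-A}. By BDG and the inequality $(\int_\tau^{\tau'}g_s^2\,ds)^{p/2}\le\int_\tau^{\tau'}g_s^p\,ds$ (for intervals of length at most $1$, $p\ge 2$), the stochastic integral contributes at most $\E\int_\tau^{\tau'}|X_s-Y_s|^p\,dA_s$ plus remaining $\cR$-type terms from the $|\hat U_s|$ pieces.

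Collecting the estimates yields, for all stopping times $0\le\tau\le\tau'\le 1$,
\[
\E\sup_{t\in[\tau,\tau']}|X_t-Y_t|^p\le C\,\E|X_\tau-Y_\tau|^p+C\,\cR+C\,\E\int_{\tau}^{\tau'}|X_s-Y_s|^p\,dA_s,
\]
which is exactly the hypothesis of Lemma~\ref{lem:Gronwall-type}. Applying that lemma with $Y_t=|X_t-Y_t|^p$, $Y_0=0$, $R=\cR$, and using that $\E\sup_{t\in[0,1]}|X_t-Y_t|^{2p}$ is finite and bounded by an admissible constant (thanks to boundedness of $b,\sigma$ and \eqref{eq:exponential-bounds-new}), yields \eqref{eq:reg-new}. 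The main technical hurdle is that the PDE only yields $\nabla^2u\in L_p$, not $L_\infty$: this forces one to \emph{(a)} route the non-Lipschitz differences of $\nabla u\cdot\sigma$ through the Hardy--Littlewood maximal function (rather than a naive Lipschitz bound), producing the unusual weight $dA_s$, and \emph{(b)} use Cauchy--Schwarz pairing between $\|\hat U\|_{L_{2p}}$ and $\|\nabla^2u\|_{L_{2p}}$ so that the error survives only as the mixed second-moment quantity appearing in $\cR$; everything else is careful bookkeeping to separate the $\bone_{\nabla\sigma\neq 0}$-carrying contributions from the Gronwall, truncation, and quadrature pieces.
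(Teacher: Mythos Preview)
Your proposal is correct and follows essentially the same strategy as the paper: It\^o's formula via the PDE \eqref{eq:PDE-trun-new}, the Hardy--Littlewood maximal inequality to control the non-Lipschitz difference $(\nabla u\,\sigma)(X_s)-(\nabla u\,\sigma)(Y_s)$ in the stochastic integral, and the Gronwall-type Lemma~\ref{lem:Gronwall-type}. The only cosmetic difference is that you organise the argument through the full Zvonkin map $\Phi=\mathrm{id}-u$ and its bi-Lipschitz property, whereas the paper works directly with $|X_t-Y_t|^p$, uses It\^o on $u$ only to rewrite $\int(b_K(X_s)-b_K(Y_s))\,ds$, and absorbs the resulting $u(t,X_t)-u(t,Y_t)$ terms by taking $\theta$ large so that $\|\nabla u\|_{\bB}\lesssim\theta^{-1/2}$ is small.
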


\begin{proof}
% To avoid carrying too many terms, let us introduce $\cR$ as the second term in \eqref{eq:reg-new} (in the big parentheses).
Let $\tau_1, \tau_2$ be stopping times with $0 \leq \tau_1 \leq \tau _2 \leq T $.
Denote $Z_t= |X_t-Y_t|^p$. 
Notice that by \eqref{eq:exponential-bounds-new}, and Markov's inequality one has
\begin{equ}
\E\sup_{t\in[\tau_1,\tau_2]}\big| \int_{\tau_1}^t b(X_s)-b_K( X_{s}) \, ds \big|^p \lesssim  \bP(\sup_{t\in[0,1]}|X_t|\geq K)\lesssim  e^{- K},
\end{equ}
and similarly for $Y$.
Therefore, repeated application of the triangle inequality yields
\begin{equs}
\E  \sup_{ t \in [\tau_1, \tau_2]} Z_ t 
&\lesssim  \E Z_{\tau_1} 
+ \E \sup_{t \in [\tau_1, \tau_2]} \big| \int_{\tau_1}^t b(\bar X_s)-b(\bar Y_{s}) \, ds \big|^p 
\\
&\quad+  \E  \sup_{t \in [\tau_1, \tau_2]}   \big| \int_{\tau_1}^{t} \sigma (\bar X_s)-\sigma (\bar Y_s) \, dB_s  \big|^p
\\
&\lesssim \E Z_{\tau_1} 
+ \E \sup_{t \in [\tau_1, \tau_2]} \big| \int_{\tau_1}^t b(X_s)-b( Y_{s}) \, ds \big|^p 
\\
&\quad+  \E  \sup_{t \in [\tau_1, \tau_2]}   \big| \int_{\tau_1}^{t} \sigma (\bar X_s)-\sigma (\bar Y_s) \, dB_s  \big|^p+\cR
%\\&\quad+N\bP(\sup_{t\in[0,1]}|\bar X_t|\geq K)+N\bP(\sup_{t\in[0,1]}|\bar Y_t|\geq K).
\\
&\lesssim  \E Z_{\tau_1} 
+ \E \sup_{t \in [\tau_1, \tau_2]} \big| \int_{\tau_1}^t b_K( X_s)-b_K( Y_{s}) \, ds \big|^p 
\\
&\quad+  \E  \sup_{t \in [\tau_1, \tau_2]}   \big| \int_{\tau_1}^{t} \sigma (\bar X_s)-\sigma (\bar Y_s) \, dB_s  \big|^p+\cR.
   \label{eq:triangle-new}
\end{equs}
By the Burkholder--Davis--Gundy inequality and the Lipschitz continuity of $\sigma$ we have
\begin{equs}
\E  \sup_{t \in [\tau_1, \tau_2]}   \big| \int_{\tau_1}^{t} \sigma (\bar X_s)-\sigma (\bar Y_s) \, dB_s  \big|^p &\lesssim \E\Big(\int_{\tau_1}^{\tau_2}\big(\sigma (\bar X_s)-\sigma (\bar Y_s)\big)^2\,ds\Big)^{p/2}
\\
&\lesssim \E\int_{\tau_1}^{\tau_2}Z_s\,ds+\bone_{\nabla\sigma\neq 0}\sup_{t\in[0,1]}\E\big(|\hat X_t|^p+|\hat Y_t|^p\big).
\end{equs}
Therefore we arrive at
\begin{equ}\label{eq:so-far-quick-new}
\E  \sup_{ t \in [\tau_1, \tau_2]} Z_ t \lesssim \E Z_{\tau_1}+\E\int_{\tau_1}^{\tau_2}Z_s\,ds+ \E \sup_{t \in [\tau_1, \tau_2]} \big| \int_{\tau_1}^t b_K( X_s)-b_K( Y_{s}) \, ds \big|^p +\cR.
\end{equ}
The integral involving $b_K$ is treated via a PDE method.
Although $u^\ell$ is not spatially twice continuously differentiable, one has $u^\ell \in W^{1,2}_q([0,1] \times \R^d)$ for all $q<\infty$ by Lemma \ref{lem:PDE-estimates}. Therefore, It\^o's formula can be applied  (see, e.g., \cite[Theorem~1, p.122]{KrylovControl}.
Hence for any $ \ell  \in \{1,...,d\}$ and $U\in\{X,Y\}$, on $\{ t \geq \tau_1\}$ we have from \eqref{eq:PDE-trun-new} and It\^o formula that 
\begin{align*}
	u^\ell(t,U_t)-u^\ell(\tau_1,U_{\tau_1})&= \int_{\tau_1}^t (\theta u^\ell(s,U_s)+b^\ell_K(U_s))ds+\int_{\tau_1}^t\nabla u^\ell(s,U_s)\sigma(U_s)dB_s
	\\&\quad+\cee^{U,\ell}_1(t) +\cee^{U,\ell}_2(t) +\cee^{U,\ell}_3(t),
\end{align*}
where
\begin{equs}
\mathcal{E}_1^{U,\ell}(t)& := \int_{\tau_1}^{t } \nabla  u^{\ell}(s,U_s)\cdot( b(\bar U_s) -b(U_s) ) \, ds ,
\\
\mathcal{E}_2^{U,\ell}(t) & := \int_{\tau_1}^{ t }  \nabla^2 u^{\ell}(s,U_s)\cdot\big( ( \sigma \sigma^*) (\bar U_s)-( \sigma \sigma^*)(U_s) ) \, ds ,
\\
\mathcal{E}_3^{U,\ell}(t) & := \int_{\tau_1}^{t }  \nabla  u^{\ell}(s,U_s)( \sigma(\bar U_s)-\sigma (U_s) ) \, dB_s.
\end{equs}
It follows that 
\begin{equs}
\int_{\tau_1}^{t } b^{\nell}_K (X_s)- b^{\nell}_K (Y_s)  \, ds & = u^{\nell}(t, X_{t } ) - u^{\nell} (t, Y_{t} )
-u^{\nell}( \tau_1, X_{\tau_1} ) +u^{\nell}( \tau_1, Y_{\tau_1} ) 
\\
&\quad- \int_{\tau_1}^{ t }  \theta \big( u^{\nell}(s, X_s)-u^{\nell}(s, Y_s)\big) \, ds 
\\
 & \quad-\int_{\tau_1}^{ t } \big( \nabla  u^{\nell}(s,X_s) \sigma ( X_s) - \nabla  u^{\nell}(s,Y_s) \sigma ( Y_s)\big) \, dB_s
 \\
&\quad  -\mathcal{E}^X_1(t)-\mathcal{E}^X_2(t)-\mathcal{E}^X_3(t)+
\mathcal{E}^Y_1(t)+\mathcal{E}^Y_2(t)+\mathcal{E}^Y_3(t). \label{eq:after-Ito-new}
\end{equs} 
For the first couple of terms in \eqref{eq:after-Ito-new} we apply \eqref{eq:u_x-estimate}, keeping in mind that $b_K$ (playing the role of $f$ therein) has its $L_\infty$ norm bounded by $\|b\|_{\bB}$, independently of $K$. Therefore, we have $\|\nabla u\|_{\bB([0,1]\times\R^d)}\leq N\theta^{-1/2}$, and so
\begin{equs}     
 \sup_{t \in [\tau_1, \tau_2]} & |  u^{\nell}( t, X_{t } ) - u^{\nell} (t, Y_{t} )
 -u^{\nell}( \tau_1, X_{\tau_1} ) +u^{\nell}( \tau_1, Y_{\tau_1} ) |+ \theta  \int_{\tau_1}^{ t }  | u^{\nell}(s, X_s)-u^{\nell}(s, Y_s) | \, ds 
 \\
& \lesssim   \theta^{-1/2}\sup_{t \in [\tau_1, \tau_2]} | X_t-Y_t|+   \theta^{1/2}  \int_{\tau_1}^{ t }  | X_s- Y_s | \, ds.  \label{eq:estimated-by-grad-new}
\end{equs}
Note that we have $\E\sup_{t\in[\tau_1,\tau_2]} Z_t<\infty$. Therefore, if we combine \eqref{eq:so-far-quick-new}, \eqref{eq:after-Ito-new}, \eqref{eq:estimated-by-grad-new}, and choose $\theta$ to be large enough, we get
\begin{equs}[eq:100]
\E\sup_{t\in[\tau_1,\tau_2]} Z_t&\lesssim \E Z_{\tau_1}+\E\int_{\tau_1}^{\tau_2} Z_s\,ds+\cR
\\
&\quad+\E\sup_{t\in[\tau_1,\tau_2]}\Big|\int_{\tau_1}^{ t }  \nabla  u^{\nell}(s,X_s) \sigma ( X_s) -  \nabla  u^{\nell}(s,Y_s) \sigma ( Y_s) \, dB_s\Big|^p
\\
&\quad+\sum_{i=1,2,3;\, U=X,Y}\E\sup_{t\in[\tau_1,\tau_2]}|\cE^U_i(t)|^p.
\end{equs}
In the sequel, we suppress the time argument from $u$ whenever there is no danger of confusion.
The next term to deal with is the stochastic integral, which requires some care due to the lack of Lipschitz continuity of $\nabla u^{\nell}$.
We argue as in \cite{Bao2020},  using the Hardy–Littlewood maximal function (see \eqref{def.maximalf}).
One then has the following well-known inequality: there exists a constant $N$ depending only on $d$ such that for all $f \in  W^1_{1,loc}(\R^d)$, for almost all $x,y\in\R^d$,
\begin{equs}\label{eq:maximal-new}
|f(x)-f(y)| \leq N |x-y|\big( \mathcal{M}| \nabla f| (x)+\mathcal{M}| \nabla f| (y) \big).% \qquad \text{ for a.e. $(x, y)$}.
\end{equs}
Recall also the Hardy--Littlewood maximal inequality (\cite{Aalto}),
\begin{equs}\label{eq:H-L-new}
\| \mathcal{M} f\|_{L_p(\R^d)} \leq N \|f\|_{L_p(\R^d)},
\end{equs}
for all $f \in L_p(\R^d)$, $p\in(1,\infty)$, where $N$ depends only on $d$ and $p$.
Since by assumption the laws of $X_t$ and $Y_t$ are absolutely continuous with respect to the Lebesgue measure for all $t>0$, \eqref{eq:maximal-new} holds with $X_t$ and $Y_t$ in place of $x$ and $y$, $d\bP\otimes dt$-almost surely.
Combining this with the Burkholder--Davis--Gundy inequality, we have
\begin{equs}
\E \sup_{t \in [\tau_1, \tau_2]}  & \big| \int_{\tau_1}^{ t }  \nabla  u^{\nell}(X_s) \sigma (X_s) -  \nabla  u^{\nell}(Y_s) \sigma (Y_s) \, dB_s \big| ^p 
\\
&\lesssim  \E \big| \int_{\tau_1}^{\tau_2}  |\nabla  u^{\nell}(X_s) \sigma (X_s) -  \nabla  u^{\nell}(Y_s) \sigma (Y_s) |^2 \, ds \big|^{p/2}
\\
& \lesssim  \E  \int_{\tau_1}^{\tau_2}   |X_s-Y_s| ^p \big| (\mathcal{M} | \nabla ( \nabla u^{\nell} \sigma) | (X_s) +   (\mathcal{M} | \nabla ( \nabla u^{\nell} \sigma) | (Y_s) \big|^p  \, ds 
\\
& \leq  \E  \int_{\tau_1} ^{ \tau_2 }  Z_s   \, d A_s,\label{eq:est-stoch-integ-new}
\end{equs}
where in the last equality, we used the definition \eqref{eq:def-A} of the process $A$.
We now move on to bounding the terms $\cE_i^U$. The bound
\begin{equ}\label{eq:E1-new} 
\E  \sup_{t \in [\tau_1, \tau_2]} | \mathcal{E}_1^U(t)|^p\leq\cR
\end{equ}
is immediate.
 Next, by the Lipschitz continuity of $\sigma$ and H\"older's inequality, we have
 \begin{equs}
 \E\sup_{t\in[\tau_1,\tau_2]}|\cE_2^U(t)|^p &\lesssim \bone_{\nabla \sigma\neq 0} \int_{0}^1\E|\nabla^2 u^\ell(U_s)|^p|\hat U_s|^{p}\,ds
% \\
% &\leq N\bone_{\nabla \sigma\neq 0} \sup_{t\in[0,1]}\big(\E|\hat U_t|^{pq'}\big)^{1/q'}....
%\\ &
\leq \cR.\label{eq:E2-new}
 \end{equs}
 Finally, by the Burkholder--Davis--Gundy and Jensen's inequalities and the Lipschitz continuity of $\sigma$, we have
 \begin{equs}\label{eq:E3-new}
  \E\sup_{t\in[\tau_1,\tau_2]}|\cE_3^U(t)|^p &\lesssim \bone_{\nabla \sigma\neq 0} \int_{0}^1\|\nabla u^{\nell}\|_{\bB([0,1]\times\R^d)}\E|\hat U_s|^{p}\,ds\leq\cR.
 \end{equs}
 since $\|\nabla u^{\nell}\|_{\bB([0,1]\times\R^d)}\lesssim 1$.
We can now combine \eqref{eq:100}, \eqref{eq:est-stoch-integ-new}, \eqref{eq:E1-new}, \eqref{eq:E2-new}, and \eqref{eq:E3-new} altogether to get
\begin{equ}
\E\sup_{t\in[\tau_1,\tau_2]} Z_t\leq N\E Z_{\tau_1}+N\E\int_{\tau_1}^{\tau_2} Z_s\,d A_s+N\cR.
\end{equ}
This brings us to the setting of Lemma \ref{lem:Gronwall-type}. From \eqref{eq:gronwal-conclusion}, we therefore obtain
\begin{equ}
\E\sup_{t\in[0,1]}Z_t\leq N^m\cR+ N \big(\E\sup_{t\in[0,1]}Z_t^2\big)^{1/2}\big(\bP( A_1\geq \frac{m}{2N})\big)^{1/2}.
\end{equ}
Since by \eqref{eq:exponential-bounds-new}, $\E\sup_{t\in[0,1]}Z_t^2\leq N$, this is precisely the claimed bound.
\end{proof}

\section{Proofs of the main results}\label{sec:proof}
First we recall the following estimate on the density of the Euler--Maruyama scheme due to Gy\"ongy and Krylov \cite[Theorem~4.2]{GyK}. We remark that while therein this bound is proved for the $b=0$ case, the general case follows immediately by means of Girsanov's theorem.
\begin{lemma}        \label{lem:Krylov-Gyongy-estimate}
Let $p\in(1,\infty]$. Under \cref{asn:multiplicative-basic}, there exists $N$ depending only on $p, d, \lambda, \| \sigma\|_{C_2}$ and $\|b \|_{\mathbb{B}}$ such that for all $G \in L_p(\R^d)$ and $t\in(0,1]$
\begin{equs}
|\E G( X^n_t)|\leq N \|G\|_{L_p(\R^d)} t^{-d/(2p)}.
\end{equs}
\end{lemma}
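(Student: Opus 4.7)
The plan is to reduce the statement to the driftless case, for which \cite[Theorem~4.2]{GyK} already provides the bound, by using Girsanov's theorem in exactly the same manner as in the proof of Corollary \ref{cor:Girsanov-multiplicative}. Specifically, recall the driftless Euler--Maruyama scheme $\bar X^n$ from \eqref{eq:EM no drift}. The cited result of Gy\"ongy and Krylov yields
\[
|\E G(\bar X^n_t)| \leq N \|G\|_{L_p(\R^d)} t^{-d/(2p)},
\]
with $N$ depending only on $p,d,\lambda$ and $\|\sigma\|_{C^2}$.

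Next I would introduce the Girsanov density
\[
\rho = \exp\Bigl(-\int_0^1 (\sigma^{-1}b)(X^n_{\kappa_n(r)})\,dB_r - \tfrac{1}{2}\int_0^1 |(\sigma^{-1}b)(X^n_{\kappa_n(r)})|^2\,dr\Bigr),
\]
which is well defined since \cref{asn:multiplicative-basic} guarantees $\|\sigma^{-1}b\|_{\bB}\leq \lambda^{-1}\|b\|_{\bB}<\infty$; in particular $\tilde\E\rho^{-q'}$ is finite for every $q'\in[1,\infty)$, with a bound depending only on $q'$ and $\lambda^{-1}\|b\|_{\bB}$. Under the probability measure $d\tilde\bP=\rho\,d\bP$, the process $X^n$ satisfies a driftless Euler scheme and therefore has the same law under $\tilde\bP$ as $\bar X^n$ has under $\bP$.

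The two pieces are then combined via H\"older's inequality: picking any $q\in(1,p)$ with dual exponent $q'=q/(q-1)$, one writes
\[
|\E G(X^n_t)| = |\tilde\E[\rho^{-1}G(X^n_t)]| \leq (\tilde\E\rho^{-q'})^{1/q'}\bigl(\E|G(\bar X^n_t)|^q\bigr)^{1/q},
\]
where the law-transfer was used in the last expectation, and applies the driftless bound to $|G|^q\in L_{p/q}(\R^d)$ to obtain the factor $\|G\|_{L_p(\R^d)}t^{-d/(2p)}$. The case $p=\infty$ is immediate from $|G(X^n_t)|\leq\|G\|_{L_\infty}$. The only bookkeeping points, which are essentially the only nontrivial aspects of the argument, are to pick $q$ strictly less than $p$ so that the driftless estimate can be applied to $|G|^q$, and to verify that all resulting constants depend only on the parameters listed in the statement.
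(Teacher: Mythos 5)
Your proposal is correct and follows essentially the same route the paper indicates: the paper simply remarks that the bound follows from \cite[Theorem~4.2]{GyK} for $b=0$ together with Girsanov's theorem, and your argument is precisely the standard way of fleshing out that remark (change of measure via $\rho$, H\"older's inequality with a strictly smaller exponent $q<p$, then the driftless bound applied to $|G|^q\in L_{p/q}$). The only minor point worth noting is that you should observe $\rho^{-1}$ is itself an exponential martingale under $\tilde\bP$ with bounded integrand, which is what justifies the finiteness of $\tilde\E\rho^{-q'}$ with a constant depending only on $q'$ and $\lambda^{-1}\|b\|_{\bB}$.
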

From \cref{lem:Krylov-Gyongy-estimate}, using Khas'minskii's argument \cite{Khas} one can get estimates for exponential moments. For the adaptation of Khas'minskii's argument for the process $X^n$ we refer to \cite[Lemma~2.3]{Bao2020} or alternatively to \cite[Lemma~5.14]{le2021taming}.
\begin{lemma}        \label{lem:kasminskii}
Let  $q>(d+2)/2$.  There exist $\beta_q, \gamma_q \in (0, \infty)$ such that  for all $\mu>0$  
\begin{equs}
\E \exp \big( \mu \int_0^1 |f(X_s)| + |f(X^n_s)| \, ds \big) \leq
\exp\big(\beta_q( 1+ ( \mu \| f\|_{L_q((0,1) \times \R^d)})^{\gamma_q}) \big).
\end{equs}
Moreover $\lim_{q \to \infty} \gamma_q=1$, and there exists $\beta= \beta(  \lambda, \|\sigma\|_{C^2}, \|b\|_{\mathbb{B}}, d) \in \R$ such that $\lim_{q \to \infty}\beta_q= \beta$. 
\end{lemma}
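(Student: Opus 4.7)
The plan is to reduce the claim to the classical Khas'minskii argument driven by Lemma~\ref{lem:Krylov-Gyongy-estimate}. Since
\[
\E\exp(\mu(A+B))\leq \bigl(\E\exp(2\mu A)\bigr)^{1/2}\bigl(\E\exp(2\mu B)\bigr)^{1/2},
\]
it suffices to bound the exponential moments of $\int_0^1|f(X_s)|\,ds$ and $\int_0^1|f(X^n_s)|\,ds$ separately. The estimate for $X$ is classical in Krylov's theory of nondegenerate diffusions with bounded drift, so I concentrate on $X^n$.

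First, I would upgrade Lemma~\ref{lem:Krylov-Gyongy-estimate} to the conditional, time-dependent Krylov estimate
\begin{equation*}
\E^s\int_s^t |f(r,X^n_r)|\,dr\leq N_q\|f\|_{L_q((s,t)\times\R^d)}(t-s)^{\alpha_q}\quad\text{a.s.},
\end{equation*}
with $\alpha_q:=1-(d+2)/(2q)>0$ under $q>(d+2)/2$. The unconditional version is obtained by combining Lemma~\ref{lem:Krylov-Gyongy-estimate} (with $p=q$) with H\"older's inequality in time, the condition $q>(d+2)/2$ being exactly what makes $r^{-d/(2(q-1))}$ integrable near~$0$. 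For the conditional version I would restart the scheme at the next grid point $\kappa_n^+(s):=\kappa_n(s)+1/n$: conditionally on $\cF_{\kappa_n^+(s)}$, $(X^n_r)_{r\geq\kappa_n^+(s)}$ is again an Euler--Maruyama scheme with initial datum $X^n_{\kappa_n^+(s)}$, to which the unconditional bound applies uniformly. The leftover interval $[s,\kappa_n^+(s)]$ of length $\leq 1/n$ is absorbed via the trivial bound $|f(r,X^n_r)|\leq\|f\|_{\bB}$, which is $\lesssim\|f\|_{L_q}n^{-\alpha_q}$ after enlarging $N_q$.

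Next, I would apply the standard Khas'minskii iteration. Writing $(\int_s^t|f|\,dr)^k$ as $k!$ times the $k$-fold ordered simplex integral and iterating the tower property with the bound of Step~1 in the innermost integral yields
\[
\E^s\frac{1}{k!}\Bigl(\int_s^t|f(r,X^n_r)|\,dr\Bigr)^{k}\leq h_{s,t}^{k},\qquad h_{s,t}:=N_q\|f\|_{L_q((s,t)\times\R^d)}(t-s)^{\alpha_q}.
\]
Hence, whenever $\mu h_{s,t}\leq 1/2$, summing the series gives $\E^s\exp(\mu\int_s^t|f(r,X^n_r)|\,dr)\leq 2$ almost surely. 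Partitioning $[0,1]$ into $M_q:=\lceil(2\mu N_q\|f\|_{L_q((0,1)\times\R^d)})^{1/\alpha_q}\rceil$ consecutive intervals on each of which this smallness holds, and chaining via the tower property, produces
\[
\E\exp\Bigl(\mu\int_0^1|f(X^n_r)|\,dr\Bigr)\leq 2^{M_q}\leq \exp\bigl((\log 2)(1+(2\mu N_q\|f\|_{L_q})^{1/\alpha_q})\bigr).
\]
This is the claimed bound with $\gamma_q=1/\alpha_q=2q/(2q-d-2)\to 1$ as $q\to\infty$ and $\beta_q=C(1+(2N_q)^{\gamma_q})$. Convergence $\beta_q\to\beta$ is achieved by observing that $N_q$ in Lemma~\ref{lem:Krylov-Gyongy-estimate} can be chosen to stabilise as $q\to\infty$ to a constant depending only on $\lambda,\|\sigma\|_{C^2},\|b\|_{\bB},d$ (trivially $|\E G(X^n_t)|\leq\|G\|_\infty$ with constant $1$, and one can interpolate).

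The main point of care is the conditional estimate in Step~1, since $X^n$ is not a continuous-time Markov process; the grid-restart device resolves this at negligible cost. The rest is purely algebraic bookkeeping of the Khas'minskii iteration and the exponent $\gamma_q$.
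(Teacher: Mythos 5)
Your overall strategy is the one the paper intends: the paper does not write a proof but says that Lemma~\ref{lem:kasminskii} follows from Lemma~\ref{lem:Krylov-Gyongy-estimate} ``using Khas'minskii's argument'', and refers to two other works for the adaptation to $X^n$. Your conditional Krylov estimate, the simplex-integral iteration, the chaining over a partition of $[0,1]$, and the bookkeeping $\gamma_q = 1/\alpha_q = 2q/(2q-d-2)\to 1$ are all correct and exactly the expected structure. The grid-restart device to obtain the conditional estimate for $X^n$ is the right way to cope with the fact that $X^n$ is only Markov at grid points.

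There is, however, one genuine flaw: the treatment of the leftover interval $[s,\kappa_n^+(s)]$. You bound $|f(r,X^n_r)|$ by $\|f\|_{\bB}$ and then invoke ``$\|f\|_{\bB}\lesssim\|f\|_{L_q}$''. That inequality is false — $L_q$ does not control $L_\infty$ — and it matters here, because in the paper's application (proofs of Theorems~\ref{thm:multiplicative} and~\ref{thm:additive}) the lemma is used with $f=|\mathcal{M}|\nabla(\nabla u\,\sigma)||^p$, which is in $L_q$ for every finite $q$ but is \emph{not} bounded in general (there is no $W^{1,2}_\infty$ estimate). The repair is easy and does not require boundedness of $f$: conditionally on $\cF_s$, for $r\in(s,\kappa_n^+(s)]$ one has $X^n_r = X^n_s + b(X^n_{\kappa_n(s)})(r-s)+\sigma(X^n_{\kappa_n(s)})(B_r-B_s)$, which (given $\cF_s$) is a nondegenerate Gaussian with covariance bounded below by $\lambda^2(r-s)I$. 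Hence its conditional density has $L_{q'}(\R^d)$-norm $\lesssim (r-s)^{-d/(2q)}$, and the same H\"older-in-time computation that produced $\alpha_q$ on the main interval gives $\E^s\int_s^{\kappa_n^+(s)}|f(r,X^n_r)|\,dr\le N_q\|f\|_{L_q}(\kappa_n^+(s)-s)^{\alpha_q}\leq N_q\|f\|_{L_q}(t-s)^{\alpha_q}$, exactly matching the bound you need. With this replacement the argument closes; everything downstream (the Khas'minskii series, the choice of $M_q$, and the limits $\gamma_q\to 1$, $\beta_q\to\beta$ using stability of $N_q$ as $q\to\infty$) is fine.
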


\begin{proof}[Proof of \cref{thm:additive}]
We apply Lemma \ref{lem:kindofregularisation} with $X$ being the solution of the main SDE \eqref{eq:main} (and so $\bar X=X$, $\hat X=0$) and $Y=X^n$ being the solution of the approximate equation \eqref{eq:main-EM} (and so $\bar Y_t=X_{\kappa_n(t)}^n$, $\hat Y_t=X_t^n-X_{\kappa_n(t)}^n$).
Our task is therefore to choose the parameters $K, m,$ so that the right-hand side of \eqref{eq:reg-new} can be bounded by
$N n^{-p((1+\alpha)/2-\eps)}$. 
Note that in the case of \cref{thm:additive} $\bone_{\nabla\sigma\neq 0}=0$, eliminating one term from the right-hand side of \eqref{def.cR}.

It will be convenient to introduce two further parameters $\mu,q\in(1,\infty)$.
By Markov's inequality, we have, 
\begin{equs}
\mathbf{P} (A_1 \geq \frac{m}{2N} ) \leq \exp (- \mu m/ 2N) \E \exp (\mu  A_1).
\end{equs}
The maximal inequality \eqref{eq:H-L-new} and
Lemma \ref{lem:kasminskii} imply that 
\begin{equs}
\E \exp (\mu  A_1) \leq \exp \big( \beta_q(1+(\mu  \| |\nabla^2 u|^p \|_{L_q((0,1) \times \R^d)})^ {\gamma_q}) \big).
\end{equs}
By Lemma \ref{lem:PDE-estimates} we have 
\begin{equ}
\|u\|_{W^{1,2}_r}\leq N(r)\|b_K\|_{L_r(\R^d)}\leq K^{d/r},
\end{equ}
and therefore, for any sufficiently large $q$,
\begin{equs}\label{eq:PDE-K dependence}
\| |\nabla^2 u|^p \|_{L_q((0,1) \times \R^d)} \leq N(q) K^{d/q},\qquad \|\nabla u\|_{\bB([0,1],C^{1-\eps}(\R^d))}\leq N(q) K^{d/q},
\end{equs}
where the second inequality follows from Sobolev embedding.
Finally, notice that the last terms in \eqref{def.cR} are precisely the ones which were estimated in Corollary \ref{cor:Girsanov-additive}, with the choice $f=b$ and $g=1$ or $g=\nabla u$. 
Consequently, we obtain from \eqref{eq:reg-new} of \cref{lem:kindofregularisation} that
 \begin{equs}
\E  \sup_{ t \in [0,1]} |X_t-X^n_t|^p 
& \leq   N \exp (- \mu m/ 2N)   \exp \Big( (\mu \beta_q N(q) K^{d/q})^ {\gamma_q} \Big)
\\
&\qquad+ N^m \exp( -  K)   + N^m N(q) n^{-p((1+\alpha)/2-\eps)} K^{d/q}  
.
\end{equs}
Choose $K=p\ln n$ and $q$ large enough so that $d\gamma_q/q\leq 1/2$. Then, the above bound implies
\begin{equ}
\E  \sup_{ t \in [0,1]} |X_t-X^n_t|^p \leq N\exp(-\mu m/2N)\exp(\mu N(\ln n)^{1/2})+N^m n^{-p((1+\alpha)/2-2\eps)}.
\end{equ}
Now choose $m = \lfloor \frac{p\eps \ln n}{ \ln N }\rfloor$, so that $N^m\leq n^{p\eps}$. Then by choosing $\mu$ sufficiently large one can achieve $\exp (-\mu m/2N)\leq n^{-\gamma}$ for any exponent $\gamma$, which then yields the required bound
\begin{equ}
\E  \sup_{ t \in [0,1]} |X_t-X^n_t|^p \leq N n^{-p((1+\alpha)/2-3\eps)},
\end{equ} 
completing the proof.
\end{proof}

\begin{proof}[Proof of \cref{thm:multiplicative}]
Similarly to the previous proof, we apply Lemma \ref{lem:kindofregularisation}. This time $\bone_{\nabla\sigma\neq 0}=1$, so we have one more term to bound. However, we are aiming only at a bound of order $N n^{-p(1/2-\eps)}$.
Let $q\in(1,\infty)$ and $v\in L_q([0,1]\times\R^d)$. Then
from Lemma \ref{lem:Krylov-Gyongy-estimate} we have
\begin{equs}
\E\int_0^1|v(s,X^n_s)|\,ds&\leq N(q)
\int_0^1 \|v(s,\cdot)\|_{L_q(\R^d)}s^{-\frac d{2q}}\,ds
\\
&\leq N(q)\|v\|_{L_q([0,1]\times\R^d)}\Big(\int_0^1 s^{-\frac d{2(q-1)}}\,ds\Big)^{\frac{q-1}q}.\label{eq:385}
\end{equs}
For every $q>d/2+1$ the integral is finite. Similarly, we get, for the same range of $q$, 
\begin{equ}
\E\int_{1/n}^1|v(s,X^n_{\kappa_n(s)})|\,ds\leq N(q)\|v\|_{L_q([0,1]\times\R^d)}.\label{eq:282}
\end{equ}
Therefore, recalling that in the context of \eqref{eq:reg-new} we have $\hat X=0$, $\hat Y_t=X_t^n-X_{\kappa_n(t)}^n$, we can write
\begin{equs}
\sup_{t\in[0,1]}&\big(\E|X_t^n-X_{\kappa_n(t)}^n|^{2p}\big)^{1/2}\big(1+\E\int_0^1|\nabla^2 u(s,X^n_s)|^{2p}\,ds\big)^{1/2}
\\
&\leq N(q)n^{-p/2}\big(1+\big\||\nabla^2 u|^{2p}\big\|_{L_q([0,1]\times\R^d)}\big)^{1/2}
\\
&\leq N(q)n^{-p/2}K^{d/(2q)},\label{eq:481}
\end{equs}
where we used \eqref{eq:PDE-K dependence} in the last step. 
%\begin{equ}\label{eq:482}
%\sup_{t\in[0,1]}\big(\E|X_t^n-X_{\kappa_n(t)}^n|^{2p}\big)\big(1+\E\int_0^1|\nabla^2 u(s,X^n_s)|^{2p}\,ds\big)\leq N(q)n^{-p}K^{d/q}.
%\end{equ}
%In the same way, one gets 
%\begin{equ}\label{eq:483}
%\sup_{t\in[0,1]}\big(\E|X_t^n-X_{\kappa_n(t)}^n|^{2p}\big)\big(1+\E\int_{1/n}^1|\nabla^2 u(s,X^n_{\kappa_n(s)})|^{2p}\,ds\big)\leq N(q)n^{-p}K^{d/q}.
%\end{equ}
Concerning the remaining terms
\begin{equs}
\cR_1:&=\E\sup_{t\in[0,1]}\Big|\int_0^t \big(b(X^n_s)-b(X^n_{\kappa_n(s)})\big)\,ds\Big|^p,
\\
\cR_2:&=\E\sup_{t\in[0,1]}\Big|\int_0^t \big(b(X^n_s)-b(X^n_{\kappa_n(s)})\big)\nabla u(s,X^n_s)\,ds\Big|^p,
\end{equs}
we can treat $\cR_1$ just as before, using the results of Section \ref{sec:quad}, more precisely Corollary \ref{cor:Girsanov-multiplicative} with $f=b$. This yields the bound $\cR_1\leq N n^{-p(1/2-\eps)}$. For $\cR_2$ we argue slightly differently, since, unlike in Corollary \ref{cor:Girsanov-additive}, there is no `weight' function $g$ in Corollary \ref{cor:Girsanov-multiplicative}
(although it would not be too difficult to include, but we choose to have at least one of the integral estimates free of the tedium of weights). 
Instead we write
\begin{equs}
\cR_2\leq N\big(\cR_3+\cR_4+\cR_5\big):&=N\Big(\E\sup_{t\in[0,1]}\Big|\int_0^t \big((b\nabla u)(s,X^n_s)-(b\nabla u)(s,X^n_{\kappa_n(s)})\big)\,ds\Big|^p
\\
&\qquad+\E\sup_{t\in[1/n,1]}\Big|\int_{1/n}^t \big(\nabla u(s,X^n_s)-\nabla u(s,X^n_{\kappa_n(s)})\big)b(X^n_{\kappa_n(s)})\,ds\Big|^p
\\
&
\qquad+\E\sup_{t\in[0,1/n]}\Big|\int_0^{t} \big(\nabla u(s,X^n_s)-\nabla u(s,X^n_{\kappa_n(s)})\big)b(X^n_{\kappa_n(s)})\,ds\Big|^p
\Big).
\end{equs}
The term $\cR_3$ falls within the scope of Corollary \ref{cor:Girsanov-multiplicative} with $f=b\nabla u$, yielding the bound $\cR_3\leq N n^{-p(1/2+\eps)}$. The bound $\cR_5\leq N n^{-p}$ is trivial. It remains to bound $\cR_4$. Since for $s\geq 1/n$, $X^n_s$ and $X^n_{\kappa_n(s)}$ both have densities, we can apply \eqref{eq:maximal-new} to get
\begin{equs}
\cR_4&\leq N \E\int_{1/n}^1 \big|\nabla u(s,X^n_s)-\nabla u(s,X^n_{\kappa_n(s)})\big)|^p\,ds
\\
&\leq   N \E\int_{1/n}^1|X^n_s-X^n_{\kappa_n(s)}|^p\big(|\cM|\nabla^2 u(s, X^n_s)|+|\cM|\nabla^2 u(s, X^n_{\kappa_n(s)})|\big)^p\,ds
\\
&\leq  N \big(\sup_{s \in [0,1]}\E |X^n_s-X^n_{\kappa_n(s)}|^{2p}\big)^{1/2} \Big(  \int_{1/n}^1\E \big(|\cM|\nabla^2 u(s, X^n_s)|+|\cM|\nabla^2 u(s, X^n_{\kappa_n(s)})|\big)^{2p} \,ds\Big)^{1/2}
\\
&\leq Nn^{-p/2}\Big(  \int_{1/n}^1\E \big(|\cM|\nabla^2 u(s, X^n_s)|+|\cM|\nabla^2 u(s, X^n_{\kappa_n(s)})|\big)^{2p} \,ds\Big)^{1/2}.
\end{equs}
By \eqref{eq:H-L-new}, \eqref{eq:PDE-K dependence}, \eqref{eq:385}, and \eqref{eq:282},  the bound $\cR_4\leq N(q)n^{-p/2}K^{1/2q}$ follows.
Combining all of the above, \eqref{eq:reg-new} then implies
\begin{equ}
\E\sup_{t\in[0,1]}|X_t-X^n_t|^p\leq N\bP(A_1\geq \frac{m}{2N})+N^m\big(e^{-K}+N(q)n^{-p(1/2-\eps)}K^{d/q}\big).
\end{equ}
From here it remains to tune the parameter $K,q,m$, which is done similarly as in the previous proof, yielding the bound \eqref{eq:main-bound-multiplicative}.
\end{proof}

%%%%%%%%%%%%%%%%%%%%%%%%%%%%%%%%%%%%%%%%%%%%%%
%% Single Appendix:                         %%
%%%%%%%%%%%%%%%%%%%%%%%%%%%%%%%%%%%%%%%%%%%%%%
%\begin{appendix}
%\section*{???}%% if no title is needed, leave empty \section*{}.
%\end{appendix}
%%%%%%%%%%%%%%%%%%%%%%%%%%%%%%%%%%%%%%%%%%%%%%
%% Multiple Appendixes:                     %%
%%%%%%%%%%%%%%%%%%%%%%%%%%%%%%%%%%%%%%%%%%%%%%
%\begin{appendix}
%\section{???}
%
%\section{???}
%
%\end{appendix}

%%%%%%%%%%%%%%%%%%%%%%%%%%%%%%%%%%%%%%%%%%%%%%
%% Support information, if any,             %%
%% should be provided in the                %%
%% Acknowledgements section.                %%
%%%%%%%%%%%%%%%%%%%%%%%%%%%%%%%%%%%%%%%%%%%%%%
\textbf{Acknowledgments}
 The authors would like to thank the referees for their especially careful reading and many suggestions. The third author was supported by Alexander von Humboldt Research Fellowship. 

\bibliographystyle{Martin} % Style BST file (imsart-number.bst or imsart-nameyear.bst)
%\bibliography{bibliography}       % Bibliography file (usually '*.bib')
\bibliography{Bounded_Drift}

%% or include bibliography directly:
% \begin{thebibliography}{}
% \bibitem{b1}
% \end{thebibliography}

\end{document}